\newtheorem{theorem}{Theorem}[section]
\newtheorem{proposition}[theorem]{Proposition}
\newtheorem{lemma}[theorem]{Lemma}
\newtheorem{corollary}[theorem]{Corollary}
\theoremstyle{definition}
\newtheorem{remark}[theorem]{Remark}
\newtheorem{definition}[theorem]{Definition}
\newtheorem{example}[theorem]{Example}
\newcommand{\mb}[1]{\mathbb{#1}}
\newcommand{\R}{\mb R}
\newcommand{\Z}{\mb Z}
\newcommand{\N}{\mb N}
\newcommand{\C}{\mb C}
\newcommand{\V}{\mathcal{V}}
\newcommand{\E}{\mathcal{E}}
\newcommand{\calL}{\mathcal{L}}
\newcommand{\K}{\mathcal{K}}
\newcommand{\F}{\mathcal{F}}
\newcommand{\T}{\mathbb{T}}
\newcommand{\calU}{\mathcal{U}}
\DeclareMathOperator{\im}{Im}
\DeclareMathOperator{\Ad}{Ad}
\DeclareMathOperator{\rank}{rank}
\DeclareMathOperator{\aut}{Aut}
\DeclareMathOperator{\id}{id}
\title{Twisted topological correspondences and Cartan subalgebras}
\author{Aaron Kettner}
\date{\today}
\subjclass[2020]{46L05, 46L08}
\thanks{Funded by GA\v{C}R project GF22-07833K and \mbox{RVO: 67985840}. Part of this work was carried out while funded by GA\v{C}R project 20-17488Y. This work has been supported by Charles University Research Centre
program No. UNCE/24/SCI/022, and by the Charles University project SVV-2023-260721. The author is currently funded by GA\v{C}R project G25-15403K.}
\begin{document}
\maketitle

\begin{abstract}
    We introduce twisted topological correspondences, which generalize both Katsura's topological correspondences as well as the twisted topological graphs introduced by Li. We show that, up to isomorphism, they are in bijection with certain principal bundles. This makes it possible to study topological correspondences using the machinery of principal and fiber bundles. We show how to associate a $C^*$-correspondence to a twisted topological correspondence, and give two different characterizations of the $C^*$-correspondences arising that way. The first one is the existence of an atlas of the vector bundle associated to the $C^*$-correspondence whose transition functions take values in a certain subgroup of the unitary group $U(n)$, and which is in some sense compatible with the left action. The other characterization is in terms of Cartan subalgebras in the compact operators on the $C^*$-correspondence. We use our findings to prove rigidity results of the $C^*$-correspondences associated to twisted topological correspondences. 
\end{abstract}

\tableofcontents

\section{Introduction}

 In the modern theory of $C^*$-algebras the notion of a $C^*$-correspondence, which is a kind of bimodule whose coefficient algebras are $C^*$-algebras, is of fundamental importance. They are particularly useful for constructing interesting examples of $C^*$-algebras, via the construction of the Cuntz--Pimsner algebra from \cite{Pimsner:1997} and \cite{Katsura:2004}. Indeed, a lot of the classical ways to construct $C^*$-algebras, like crossed products by the integers or graph $C^*$-algebras, can be studied in a unified framework using $C^*$-correspondences and their Cuntz--Pimsner algebras. 

$C^*$-correspondences can be regarded as a notion of arrows between $C^*$-algebras that are more general than the usual $\ast$-homomorphisms \cite{Meyer:2012}. Since $C^*$-algebras are interpreted as noncommutative topological spaces by Gelfand duality, it is natural to consider correspondences between topological spaces. Such a notion was defined by Katsura in \cite{katsura:2004_2}, and called \emph{topological correspondence}. A topological correspondence between two locally compact Hausdorff spaces $X$ and $Y$ consists of a locally compact Hausdorff space $Z$, together with a local homeomorphism $s:Z\to X$ and a continuous map $r:Z\to Y$. In this way, topological correspondences generalize continuous maps between spaces, as well as discrete graphs. In the special case that $X$ and $Y$ are the same space, one obtains a topological graph as defined in \cite{katsura:2004_2}. In \cite{Li:2017} topological graphs were twisted by a complex line bundle, generalizing the twisted local homeomorphisms from \cite{deaconu:2001}. One can associate a $C^*$-correspondence to a topological correspondence or (twisted) topological graph, and then obtain a $C^*$-algebra as the Cuntz--Pimsner algebra of that $C^*$-correspondence. The $C^*$-algebras obtained this way include graph $C^*$-algebras and crossed products of commutative $C^*$-algebras by the integers. In a celebrated result \cite{Katsura:2008}, Katsura showed that topological graph $C^*$-algebras realise all UCT Kirchberg algebras, meaning all purely infinite simple separable nuclear $C^*$-algebras satisfying the Universal Coefficient Theorem (UCT).

A natural question is how to characterize which $C^*$-correspondences can be obtained from (twisted) topological graphs or correspondences. This is also very useful when trying to understand Cuntz--Pimsner algebras. Knowing that a given $C^*$-correspondence arises from a topological graph grants access to a wide range of results relating the structure of the topological graph to properties of its $C^*$-algebra. This includes conditions for simplicity and pure infiniteness \cite{Katsura:2008} as well as a description of the tracial state space \cite{Schafhauser:2016}. Furthermore, every twisted topological graph $C^*$-algebra has an underlying twisted étale groupoid \cite{KumjianLi:2017}. 

In \cite{frausino2023} the authors showed that a finitely generated and projective Hilbert $C(X)$-module, where $X$ is a compact Hausdorff space, can arise as the right module associated to a topological graph if and only if the associated vector bundle has an atlas whose transition functions take values in the permutation matrices. This result bears a curious similarity to one of Li and Renault \cite{LiRenault:2019}. They considered Cartan subalgebras in homogeneous $C^*$-algebras. A Cartan subalgebra $D$ of a $C^*$-algebra $A$ is a maximal abelian regular subalgebra such that there is a unique conditional expectation from $A$ to $D$. They were introduced by Renault in \cite{renault:2008} to give an algebraic characterization of twisted groupoid $C^*$-algebras, building on work of Kumjian \cite{kumjian:1986}. Recently Cartan subalgebras have attracted a lot of attention in the context of the Elliott classification program, since it has been shown that they are related to the question of whether any separable nuclear $C^*$-algebra satisfies the UCT \cite{BarlakLi:2017}, \cite{BarlakLi:2020}.

An $n$-homogeneous $C^*$-algebra can be described as a fiber bundle with fiber the $n\times n$-matrices, $M_n$, and structure group the automorphism group $\aut(M_n)$. It was shown in \cite{LiRenault:2019} that an $n$-homogeneous $C^*$-algebra has a Cartan subalgebra if and only if the principal bundle associated to the fiber bundle reduces to an $\aut(M_n,D_n)$-bundle. Here the group $\aut(M_n,D_n)$ consists of those automorphisms of $M_n$ which leave the diagonal subalgebra $D_n$ invariant. Since all automorphisms of $M_n$ are inner, elements of $\aut(M_n,D_n)$ are precisely those inner automorphisms implemented by a unitary that normalizes $D_n$. One can easily show that a unitary $u$ normalizes $D_n$, meaning that $uD_nu^*$ as well as $u^*D_nu$ are contained in $D_n$, if and only if $u$ is the product of a permutation matrix with a unitary diagonal matrix. We write $\calU(N_{D_n})$ for the group of such unitary normalizers. A rank $n$ vector bundle $\V$ over a compact space $X$ has an atlas with $\calU(N_{D_n})$-valued transition functions if and only if the principal bundle associated to the $C^*$-algebra of compact operators $\K(\Gamma(\V))$, which is an $n$-homogeneous $C^*$-algebra over $X$, reduces to a $\aut(M_n,D_n)$-bundle. Here $\Gamma(\V)$ denotes the space of continuous sections of $\V$. Note that by the classical Serre--Swan theorem, every finitely generated projective module over $C(X)$ is of the form $\Gamma(\V)$ for some vector bundle $\V$. Thus we see that the conditions in \cite{frausino2023} and \cite{LiRenault:2019} for the existence of topological graph models and Cartan subalgebras, respectively, are the same up to a unitary diagonal matrix. This unitary diagonal matrix encodes the twist, and accounts for the fact that \cite{frausino2023} only considered nontwisted topological graphs. 

We connect the findings of \cite{frausino2023} and \cite{LiRenault:2019}. First of all we define twisted topological correspondences, whose definition in this generality seems to be missing in the literature. We prove that they can be described by principal $\calU(N_{D_n})$-bundles. We then show that a sufficiently nice $C^*$-correspondence $\E$ between two commutative $C^*$-algebras can be constructed from a twisted topological correspondence if and only if the algebra of compact operators $\K(\E)$ has a Cartan subalgebra containing the image of the left action of $\E$, if and only if the vector bundle associated to $\E$ has an atlas whose transition functions take values in $\calU(N_{D_n})$, and which is compatible with the left action in a certain sense. In fact we prove something stronger, namely that there is a bijection between isomorphism classes of such objects, under suitable notions of isomorphism. 

Our results provide a different, more geometric point of view on topological graphs and correspondences. In particular, it allows one to approach these objects from the perspective of fiber and principal bundles. The latter are extremely well studied, with an abundance of deep methods and results. This theory can now be utilised to study topological graphs and correspondences.

On the other hand, the Cartan-subalgebra perspective is useful when considering another natural question: To what extent can one reconstruct a topological correspondence from its $C^*$-correspondence? In particular, this asks if isomorphism between the associated $C^*$-correspondences implies isomorphism of topological correspondences. This type of rigidity question has a long history, and has been considered in various contexts in operator algebras, where usually one tries to reconstruct a graph, group or dynamical system from the associated $C^*$-algebra. In \cite{frausino2023} the authors considered to what extend one can reconstruct a topological graph from its $C^*$-correspondence. Related questions were raised in \cite{davidsonRoydor:2011,davidsonKatsoulis:2011}, and in the context of multivariable dynamical systems \cite{KakariadisKatsoulis:2012,KakariadisKatsoulis:2014,Katsoulis:2018,KatsoulisRamsey:2022}. We build on the work in \cite{frausino2023} and try to reconstruct twisted topological correspondences from the associated $C^*$-correspondences. It is not clear exactly how much information about the topological correspondence is contained in the right Hilbert module structure. However, we show how in certain special cases one can reconstruct the topological correspondence as well as the twist from the left action of the $C^*$-correspondence. 

To see whether the results here can be generalized to the setting of groupoid correspondences, possibly using noncommutative bundle theory, is the subject of future research. 

\subsection{Summary of the paper}
We start in Section \ref{sect:twisted coverings and principal bundles} by reviewing the basic theory of principal bundles and fiber bundles. We explicitly describe some basic constructions in order to fix our notation. 

We then define \emph{twisted coverings} of a locally compact Hausdorff space $X$, which are coverings together with a Hermitian line bundle on the covering space. In Theorem \ref{thm:twisted coverings and principal bundles}, they are shown to be in bijection with principal $\calU(N_{D_n})$-bundles over $X$. Here $\calU(N_{D_n})$ is the group of unitary normalizers of the canonical diagonal $D_n$ inside of the $n\times n$-matrices $M_n$. The last result of Section \ref{sect:twisted coverings and principal bundles} shows that if a principal bundle and a twisted covering correspond to each other as in Theorem \ref{thm:twisted coverings and principal bundles}, then they induce isomorphic Hilbert $C_0(X)$-modules. 

Theorem \ref{thm:twisted coverings and principal bundles} is what allows us to study twisted topological correspondences through the lens of principal and fiber bundles. It is also the most important tool for the main result of Section \ref{sect:correspondences}. Take a sufficiently nice $C^*$-correspondence $\E$ between commutative $C^*$-algebras, and write $\V$ for the vector bundle obtained via Serre--Swan. Then Theorem \ref{thm:corr and atlas one to one} establishes a bijection between isomorphism classes of twisted topological correspondences associated to $\E$, and atlases of $\V$ whose transition functions take values in $\calU(N_{D_n})$, and which are compatible with the left action in a certain sense. This generalizes \cite[Corollary 6.2]{frausino2023} in two different directions, namely taking both twists as well as the left action into account. 

The strategy for proving Theorem \ref{thm:corr and atlas one to one} is to first associate an atlas to a topological correspondence (Section \ref{sect:atlas from corr}), then associate a topological correspondence to an atlas (Section \ref{sec:corr from atlas}), and show that the two constructions are inverse to each other. All that is then left to check in the proof of Theorem \ref{thm:corr and atlas one to one} is that the constructions respect isomorphism classes. 

The same strategy of proof, namely to establish both directions of the bijection separately and then verify that it respects isomorphism classes, is used in Section \ref{sect:perspective of the compacts}. There we prove in Theorems \ref{thm:cartan and atlas} and \ref{thm:corr to cartan} that similar bijections exist for conjugacy classes of Cartan subalgebras in the $C^*$-algebra of compact operators on $\E$. 

To illustrate these results further, let $\E$ be a (sufficiently nice) $C^*$-correspondence from $C_0(Y)$ to $C_0(X)$, with left action $\varphi:C_0(Y)\to\K(\E)$, and let $\V$ be the vector bundle such that $\E\cong\Gamma_0(\V)$ as right Hilbert $C_0(X)$-modules. Denote the collection of all twisted $Y$-$X$-correspondences associated to $\E$ by $\{(Z,r,s,\calL)\}$. More precisely, this means that $Z$ is a locally compact Hausdorff space, $r:Z\to Y$ is a continuous map, $s:Z\to X$ is a local homeomorphism, and $\calL$ is a Hermitian line bundle on $Z$. Furthermore, $(Z,r,s,\calL)$ being associated to $\E$ means that $\E$ is isomorphic to the $C^*$-correspondence $\Gamma_0(Z,r,s,\calL)$ obtained from $(Z,r,s,\calL)$. 
Let $\{(U_i,h_i)\}$ be the collection of all atlases for $\V$ which are normalizing (Definition \ref{def:normalizing atlas}) and diagonalize the left action $\varphi$ (Definition \ref{def:diagonalizing atlas}). Similarly, let $\{D\subset\K(\E)\}$ be the collection of all Cartan subalgebras $D$ of $\K(\E)$ which contain the image of $\varphi$. Theorems \ref{thm:corr and atlas one to one}, \ref{thm:cartan and atlas} and \ref{thm:corr to cartan} can most succinctly be summarized in the diagram 
% https://q.uiver.app/#q=WzAsMyxbMCwxLCJcXHsoWixyLHMsXFxtYXRoY2Fse0x9KVxcfSJdLFsxLDAsIlxceyhVX2ksaF9pKVxcfSJdLFsyLDEsIlxce0RcXHN1YnNldFxcbWF0aGNhbHtLfShcXG1hdGhjYWx7RX0pXFx9Il0sWzEsMiwiXFx0ZXh0e1RobSB9XFxyZWZ7dGhtOmNhcnRhbiBhbmQgYXRsYXN9IiwwLHsic3R5bGUiOnsidGFpbCI6eyJuYW1lIjoiYXJyb3doZWFkIn19fV0sWzAsMiwiXFx0ZXh0e1RobSB9XFxyZWZ7dGhtOmNvcnIgdG8gY2FydGFufSIsMix7InN0eWxlIjp7InRhaWwiOnsibmFtZSI6ImFycm93aGVhZCJ9fX1dLFswLDEsIlxcdGV4dHtUaG0gfVxccmVme3RobTpjb3JyIGFuZCBhdGxhcyBvbmUgdG8gb25lfSIsMCx7InN0eWxlIjp7InRhaWwiOnsibmFtZSI6ImFycm93aGVhZCJ9fX1dXQ==
\begin{equation}\label{diag:triangle}\begin{tikzcd}
	& {\{(U_i,h_i)\}} \\
	{\{(Z,r,s,\mathcal{L})\}} && {\{D\subset\mathcal{K}(\mathcal{E})\}}
	\arrow["{\text{Thm }\ref{thm:cartan and atlas}}", tail reversed, from=1-2, to=2-3]
	\arrow["{\text{Thm }\ref{thm:corr and atlas one to one}}", tail reversed, from=2-1, to=1-2]
	\arrow["{\text{Thm }\ref{thm:corr to cartan}}"', tail reversed, from=2-1, to=2-3]
\end{tikzcd}\end{equation}
where all the arrows represent bijections between isomorphism classes of the objects. As shown in Proposition \ref{prop:commutativity of diagram}, the diagram commutes up to isomorphism. 

The last result of Section \ref{sect:perspective of the compacts} is Proposition \ref{prop:isos for different Cstar correspondences}. There we do not fix a $C^*$-correspondence. Instead we consider atlases, Cartan subalgebras or topological correspondences associated to two different $C^*$-correspondences, and give conditions for when they are isomorphic. 

Section \ref{sec:applications} concerns applications and examples of Diagram (\ref{diag:triangle}). We first consider twisted topological graphs over spheres, and are able to reproduce results from \cite{LiRenault:2019} with different methods. In Section \ref{sect:covering with nontrivial vector bundle} we give an example of a covering with nontrivial associated vector bundle, which answers the question raised in \cite{frausino2023} of whether such coverings exist. We then study to what extent a twisted topological correspondence can be reconstructed from its associated $C^*$-correspondence. We show that in particular examples this is possible using the left action of the $C^*$-correspondence. The main tools are the results of Section \ref{sect:perspective of the compacts}. We study the abelian subalgebra $C^*(\im\varphi, C_0(X))$ of $\K(\E)$, which is generated by $C_0(X)$ regarded as a subset of $\K(\E)$, and by the image of the left action $\varphi$. We also consider its double commutant $C^*(\im\varphi, C_0(X))''$ inside $\K(\E)$. Both are abelian subalgebras of $\K(\E)$, which are contained in any Cartan subalgebra of $\K(\E)$ containing the image of $\varphi$. We study how much information these subalgebras contain in general. In \cite{frausino2023}, the authors ask what data one needs in addition to local conjugacy of topological graphs to obtain an isomorphism between the associated $C^*$-correspondences. In Corollary \ref{cor:iso and local conjugacy}, we answer this question in the special case that the fibers of either $C^*(\im\varphi, C_0(X))$ or $C^*(\im\varphi, C_0(X))''$ have constant rank over $X$.

\textit{Acknowledgements:} I would like to thank Tristan Bice, Kevin Brix, Nina Henn, Georg Jakob, Alistair Miller, and Karen Strung for helpful conversations. I would also like to thank Karen Strung for carefully reading parts of the manuscript.   

\section{Twisted covering maps and their associated principal bundles} \label{sect:twisted coverings and principal bundles}

\subsection{Principal bundles and fiber bundles}\label{sct:principal bundles and fiber bundles}

The main reference for this section is \cite{Husemoller:1993}. A \emph{bundle} is a triple $(Z,p,X)$ consisting of two topological spaces $Z$ and $X$ called the \emph{total space} and the \emph{base space}, respectively, and a continuous map $p$ from $Z$ to $X$ called the \emph{bundle projection}. Note that $p$ is not required to be surjective. The \emph{fiber over $x\in X$} is the space $p^{-1}(x)$. If $(Z,p,X)$ and $(Z',p',X)$ are two bundles over the same base space $X$, then a map $\phi$ from $Z$ to $Z'$ is called \emph{base-preserving} if $p'\circ\phi=p$. A continuous base-preserving map is called a \emph{bundle morphism}.

Let $G$ be a topological group. Let $P$ be a locally compact Hausdorff space with a free right $G$-action that admits a continuous translation function. We write $\pi:P\to P/G$ for the quotient map. A bundle of the form $(P,\pi,P/G)$ is called a \emph{principal $G$-bundle} (see \cite[Chapter 4, Definition 2.2]{Husemoller:1993}). Every fiber of a principal $G$-bundle is homeomorphic to $G$. A bundle morphism between two principal $G$-bundles is called a \emph{principal bundle morphism} provided it intertwines the $G$-actions. 

Let $\mathcal{P}=(P,p,X)$ be a principal $G$-bundle. Let $F$ be a locally compact Hausdorff space on which $G$ acts from the left. Take the quotient of the product space $P\times F$ by the equivalence relation that identifies $(\alpha,t)$ with $(\alpha\cdot g,g^{-1}\cdot t)$ for all $g\in G$. This is the total space of a bundle whose bundle projection is given by $p_F([\alpha,t])=p(\alpha)$. A bundle arising in this way is called a \emph{fiber bundle associated to $\mathcal{P}$} with fiber $F$, and we will denote it by $\mathcal{P}\times F$. The group $G$ is called the \emph{structure group} of the fiber bundle. 

Let $\mathcal{P}\times F$ and $\mathcal{P}'\times F$ be two fiber bundles with underlying principal $G$-bundles $\mathcal{P}=(P,p,X)$ and $\mathcal{P}'=(P',p',X)$. Let $\phi:P\to P'$ be a principal bundle morphism. The induced map $\phi\times 1_F$ from $P\times F$ to $P'\times F$ descends to the quotient, and thus to a bundle morphism from $\mathcal{P}\times F$ to $\mathcal{P}'\times F$. A bundle morphism of this form is called a \emph{fiber bundle morphism}. 

Let $(Z,p,X)$ be a fiber bundle with fiber $F$ and structure group $G$. A \emph{chart} is a pair $(U,h)$ consisting of an open set $U$ in $X$ and a fiber bundle isomorphism from the trivial fiber bundle $U\times F$ to $p^{-1}(U)$. An \emph{atlas} is a collection of charts $(U_i,h_i)_{i\in I}$ such that the $U_i$ cover $X$. A fiber bundle that admits an atlas is called \emph{locally trivial}. We always demand fiber and principal bundles to be locally trivial. Set $g_{ij}(x)\coloneqq h_i^{-1}\circ h_j(x,\cdot)$, which is a map on $F$. Each $g_{ij}(x)$ is given by an element of $G$, and the map $x\mapsto g_{ij}(x)$ from $U_i\cap U_j$ to $G$ is continuous. One can show that the collection of functions $g_{ij}$ satisfies the cocycle identity $g_{ij}g_{jk}=g_{ik}$ for all $i,j,k\in I$ such that $U_i\cap U_j\cap U_k\neq 0$. 

In general, if $G$ is a topological group, then a collection of $G$-valued continuous functions $g_{ij}$ from $U_i\cap U_j$ such that the cocycle identity holds is called a \emph{system of transition functions}. We have just seen that given a fiber bundle, one obtains a system of transition functions. On the other hand, if we are given a system of transition functions and an action of $G$ on a topological space $F$, then we can construct a fiber bundle $\mathcal{Z}$ over $X$ with fiber $F$ and structure group $G$. This fiber bundle then has an atlas whose transition functions are exactly the $g_{ij}$. For the construction of $\mathcal{Z}$ see \cite[Chapter 5,Theorem 3.2]{Husemoller:1993}. It roughly works in the following way: Let \[W\coloneqq \bigsqcup_{i\in I}U_i\times F, \]
and write elements of $W$ as triples $(x,f,i)$ with $x\in U_i$, $f\in F$ and $i\in I$. Define an equivalence relation which identifies $(x,f,i)$ and $(y,f',j)$ if $x=y$ and $f'=g_{ji}f$. The total space of $\mathcal{Z}$ is the quotient of $W$ by this equivalence relation. The bundle projection of $\mathcal{Z}$ maps $[x,f,i]$ to $x$.

A (complex) \emph{vector bundle} is a bundle all of whose fibers are finite dimensional complex vector spaces, and which is locally trivial as described above. Note that a fiber bundle whose fiber is a finite dimensional complex vector space, is in particular a vector bundle. However, vector spaces do not need to have the same vector space in each fiber. The \emph{rank} of a vector bundle over a point in the base space is the dimension of the fiber over that point. We even allow the rank to be zero. This is the case if the bundle projection is not surjective, though we demand its image be open. A morphism of vector bundles is a bundle morphism that restricts to a linear map in each fiber.

 \subsection{Constructing morphisms of principal bundles}
 Let $G$ and $H$ be topological groups and $X$ be a topological space.
Take a principal $G$-bundle $\mathcal{P}=(P,p,X)$ and a continuous group homomorphism $\rho$ from $G$ to $H$. Choose an atlas $(U_i,h_i)_{i\in I}$ of $\mathcal{P}$ and write $\{g_{ij}\}$ for its transition functions. Defining $h_{ij}\coloneqq\rho(g_{ij})$ yields a new system of transition functions. There is a unique principal $H$-bundle $\mathcal{Q}=(Q,q,X)$ associated to these new transition functions \cite[Chapter 5,Theorem 3.2]{Husemoller:1993}.

\begin{lemma}\label{lem:inducing principal bundle morphism}
    The map $\rho$ induces a principal bundle morphism from $\mathcal{P}$ to $\mathcal{Q}$. If $\rho$ is surjective, then this morphism is as well.
\end{lemma}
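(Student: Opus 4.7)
The plan is to use the explicit description of principal bundles as quotients recalled in Section \ref{sct:principal bundles and fiber bundles}. Realize $P$ as the quotient of $\bigsqcup_{i\in I} U_i\times G$ by the relation identifying $(x,g,i)$ with $(x, g_{ji}(x)g, j)$ for $x\in U_i\cap U_j$, and realize $Q$ as the corresponding quotient of $\bigsqcup_{i\in I} U_i\times H$ with transition functions $h_{ij}=\rho(g_{ij})$. I would then define the candidate morphism $\phi:P\to Q$ on representatives by $[x,g,i]\mapsto [x,\rho(g),i]$.

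The first step is to check that $\phi$ is well defined. If $[x,g,i]=[x,g',j]$ in $P$ then $g'=g_{ji}(x)g$; applying $\rho$ and using $h_{ji}=\rho(g_{ji})$ yields $\rho(g')=h_{ji}(x)\rho(g)$, which says exactly that $[x,\rho(g),i]=[x,\rho(g'),j]$ in $Q$. Continuity follows from the universal property of the quotient, since on each piece $U_i\times G$ the map is $(x,g)\mapsto(x,\rho(g))$, continuous by continuity of $\rho$. Base-preservation is immediate since both projections are given by $[x,\cdot,i]\mapsto x$.

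Next I would verify that $\phi$ intertwines the actions. Representing the right $G$-action on $P$ as $[x,g,i]\cdot g_0=[x,gg_0,i]$ and the right $H$-action on $Q$ analogously, multiplicativity of $\rho$ gives $\phi([x,g,i]\cdot g_0)=[x,\rho(g)\rho(g_0),i]=\phi([x,g,i])\cdot\rho(g_0)$. For surjectivity when $\rho$ is surjective, any element of $Q$ is represented by some $[x,h,i]$; choosing $g\in G$ with $\rho(g)=h$ produces $\phi([x,g,i])=[x,h,i]$.

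There is no serious obstacle: the construction of $\mathcal{Q}$ via $h_{ij}=\rho(g_{ij})$ was set up precisely so that the fiberwise map $\rho$ descends compatibly. The only point requiring care is to trace through the equivalence relations on both sides simultaneously, which is routine once the quotient descriptions are in place.
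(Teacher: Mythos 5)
Your proposal is correct and follows essentially the same route as the paper: realizing $P$ and $Q$ as quotients of $\bigsqcup_i U_i\times G$ and $\bigsqcup_i U_i\times H$, defining the map on representatives by applying $\rho$ in the group coordinate, and checking well-definedness, continuity, equivariance, and surjectivity exactly as the paper does. No gaps.
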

\begin{proof}
Recall that the total space $P$ of $\mathcal{P}$ is a quotient of the space $W_G=\bigsqcup_{i\in I}U_i\times G$, with respect to the equivalence relation that identifies $(x,g,i)$ and $(y,h,j)$ if and only if $x=y$ and $h=g_{ji}g$. The same description with $W_H$ instead of $W_G$ and $\rho(g_{ij})$ instead of $g_{ij}$ holds for the total space $Q$ of $\mathcal{Q}$. Define a map $\phi$ from $W_G$ to $W_H$ by sending $(x,g,i)$ to $(x,\rho(g),i)$. If $(x,g,i)$ and $(x,h,j)$ are equivalent then we have
\[\phi(x,g,i)=(x,\rho(g),i)=(x,\rho(g_{ij}h),i)=(x,\rho(g_{ij})\rho(h),i),\]
and $(x,\rho(g_{ij})\rho(h),i)$ is equivalent to $(x,\rho(h),j)=\phi(x,h,j)$. Hence $\phi$ respects the equivalence relation, and thus induces a map $\Phi$ from $P$ to $Q$. Since $\phi$ is continuous, $\Phi$ is as well. It respects the bundle projections $p$ and $q$, and hence defines a bundle morphism from $\mathcal{P}$ to $\mathcal{Q}$. To show that $\Phi$ is a principal bundle morphism we need to show that it is equivariant with respect to the $G$-action on $P$ and the $H$-action on $Q$, namely that $\Phi(\alpha\cdot t)=\Phi(\alpha)\cdot\rho(t)$ for any $\alpha\in P$ and $t\in G$. For every $(x,g,i)\in W_G$ and any $t\in G$ we have
\[\Phi([x,g,i]\cdot t)=\Phi([x,gt,i])=[x,\rho(gt),i]=[x,\rho(g),i]\rho(t)=\Phi([x,g,i])\cdot\rho(t).\]

If $\rho$ is surjective, then it immediately follows from the above construction that $\Phi$ is surjective as well. 
\end{proof}

\begin{lemma}\label{lem:inducing fiber bundle morphism}
    Let $F_G$ and $F_H$ be topological spaces that $G$ and $H$ act on, respectively. Let $\psi$ be a continuous map from $F_G$ to $F_H$ that is equivariant with respect to $\rho$ and the actions of $G$ and $H$. Let $\mathcal{M}$ and $\mathcal{N}$ be fiber bundles associated to $\mathcal{P}$ and $\mathcal{Q}$ with fibers $F_G$ and $F_H$, respectively. Sending $[\alpha,y]$ to $[\Phi(\alpha),\psi(y)]$ defines a morphism $\Psi:\mathcal{M}\to\mathcal{N}$ of fiber bundles. If $\rho$ and $\psi$ are surjective, then $\Psi$ is as well.
\end{lemma}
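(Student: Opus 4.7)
The plan is to check the four required properties of $\Psi$ in turn: well-definedness, continuity, base-preservation (making it a bundle morphism), and the fact that it arises from a principal bundle morphism plus equivariant fiber data (making it a fiber bundle morphism), and finally to handle surjectivity.

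First, I would define the map at the level of the product spaces: let $\widetilde{\Psi}:P\times F_G\to Q\times F_H$ be given by $\widetilde{\Psi}(\alpha,y)=(\Phi(\alpha),\psi(y))$. To show that $\widetilde{\Psi}$ descends to the quotients defining $\mathcal{M}$ and $\mathcal{N}$, I would pick an element $(\alpha,y)\in P\times F_G$ and any $g\in G$, and compute
\[\widetilde{\Psi}(\alpha\cdot g, g^{-1}\cdot y)=(\Phi(\alpha\cdot g),\psi(g^{-1}\cdot y))=(\Phi(\alpha)\cdot \rho(g),\rho(g)^{-1}\cdot\psi(y)),\]
using that $\Phi$ is a principal bundle morphism (Lemma \ref{lem:inducing principal bundle morphism}) and that $\psi$ is $\rho$-equivariant. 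The latter is equivalent to $(\Phi(\alpha),\psi(y))$ under the $H$-action, so $\widetilde{\Psi}$ respects the equivalence relations and induces a well-defined map $\Psi([\alpha,y])=[\Phi(\alpha),\psi(y)]$.

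Continuity of $\Psi$ follows from continuity of $\widetilde{\Psi}=\Phi\times\psi$ together with the universal property of the quotient topology on $\mathcal{M}$ and $\mathcal{N}$. Base-preservation is immediate: the bundle projection on $\mathcal{N}$ sends $[\Phi(\alpha),\psi(y)]$ to $q(\Phi(\alpha))=p(\alpha)$, which is exactly the projection of $[\alpha,y]$ in $\mathcal{M}$, since $\Phi$ is base-preserving. This gives a bundle morphism, and by construction $\Psi$ is of the form obtained from the principal bundle morphism $\Phi$ combined with the equivariant fiber map $\psi$, so it qualifies as a morphism of fiber bundles in the sense introduced in Section \ref{sct:principal bundles and fiber bundles} (suitably generalized to allow different structure groups and fibers linked by $\rho$ and $\psi$).

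For the final surjectivity statement, assume $\rho$ and $\psi$ are surjective. By Lemma \ref{lem:inducing principal bundle morphism}, $\Phi$ is then surjective, so $\widetilde{\Psi}=\Phi\times\psi$ is surjective on $P\times F_H$, and therefore the induced map $\Psi$ on quotients is surjective. The only mildly delicate point in the whole argument is making sure the $\rho$-equivariance of $\psi$ is invoked correctly when checking that $\widetilde{\Psi}$ respects the equivalence relation; everything else is a routine verification on representatives.
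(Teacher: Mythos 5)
Your proof is correct. The paper dismisses this lemma with the remark that the argument is ``essentially the same'' as that of Lemma \ref{lem:inducing principal bundle morphism}, whose proof works with the local presentation of the total spaces as quotients of $\bigsqcup_{i}U_i\times G$ coming from a choice of atlas and transition functions; transported to the present setting this would mean defining $\Psi$ on $\bigsqcup_i U_i\times F_G$ by $(x,y,i)\mapsto(x,\psi(y),i)$ and checking compatibility with the cocycles $g_{ij}$ and $\rho(g_{ij})$. You instead use the intrinsic description of the associated bundle as the quotient of $P\times F_G$ by the relation $(\alpha,y)\sim(\alpha\cdot g,g^{-1}\cdot y)$, which is exactly how the paper defines $\mathcal{M}$ and $\mathcal{N}$ in Section \ref{sct:principal bundles and fiber bundles}, and you verify that $\Phi\times\psi$ descends using the $G$-equivariance of $\Phi$ (from Lemma \ref{lem:inducing principal bundle morphism}) together with the $\rho$-equivariance of $\psi$; your key computation $\widetilde{\Psi}(\alpha\cdot g,g^{-1}\cdot y)=(\Phi(\alpha)\cdot\rho(g),\rho(g)^{-1}\cdot\psi(y))\sim(\Phi(\alpha),\psi(y))$ is exactly right, as are the continuity, base-preservation and surjectivity steps. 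Your route is atlas-free and slightly cleaner, and it makes the role of equivariance transparent; the paper's local route has the minor advantage of exhibiting the transition functions of $\mathcal{N}$ explicitly as $\rho(g_{ij})$, which is what later arguments (e.g.\ in Lemma \ref{lem:pair from fiber bundle}) actually use. Your parenthetical caveat is also well placed: the paper's notion of ``fiber bundle morphism'' is stated only for a fixed group and fiber, so the statement being proved is implicitly the $(\rho,\psi)$-twisted generalization, and your construction is the correct reading of it.
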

\begin{proof}
    The proof is essentially the same as the proof of Lemma \ref{lem:inducing principal bundle morphism}.
\end{proof}

\subsection{Twisted covering maps} \label{sect:twisted coverings}

Let $X$ and $Z$ be locally compact Hausdorff spaces. We call a continuous map $s:Z\to X$ a \emph{covering map} if the following holds: For every point $x\in s(Z)$ there exists an open neighborhood $U$ of $x$ contained in $s(Z)$, a discrete space $D_x$ and a homeomorphism $\phi$ from $U\times D_x$ to $s^{-1}(U)$ such that $s\circ\phi=p_1$. Here $p_1$ is the map from $U\times D_x$ to $U$ mapping $(x,d)$ to $x$ for every $d\in D_x$. A covering map is called \emph{$n$-sheeted} if $s$ is surjective and there exists $n\in\N$ such that for all $x\in X$, the space $D_x$ has exactly $n$ elements.

A map $s$ between locally compact Hausdorff spaces $Z$ and $X$ is called a \emph{local homeomorphism} if every $z\in Z$ has a neighborhood $U$ whose image $s(U)$ is open, and such that $s$ restricts to a homeomorphism from $U$ to $s(U)$. If $s$ is proper onto its image, then it is a covering map. Conversely, every covering map is a local homeomorphism.

A \emph{metric} on a vector bundle $\V=(V,p,X)$ is a continuous map $\beta$ from the total space of $\V\oplus\V$ to $\C$ such that for each $x\in X$, restricting $\beta$ to $p^{-1}(x)\times p^{-1}(x)$ yields an inner product on $p^{-1}(x)$. By \cite[Theorem 2.5]{partI}, if there exists a metric on $\V$ then it is unique up to isometry (note that \cite[Theorem 2.5]{partI} states paracompactness of $X$ as an assumption, but this is only used to ensure the existence of a metric). A \emph{Hermitian line bundle} $\calL$ over $X$ is a complex line bundle together with a metric $\beta$. By \cite[Chapter 5, Theorem 7,4]{Husemoller:1993}, a Hermitian line bundle has an atlas whose transition functions take values in $\T$. The converse is true as well, that is, a complex line bundle whose structure group reduces to $\T$ has an Hermitian metric. 
 \begin{definition}\label{def:twisted local homeo}
     Let $X$ be a locally compact Hausdorff space. A \emph{twisted local homeomorphism over $X$} is a triple $(Z,s,\calL)$ consisting of a local homeomorphism $s:Z\to X$, for $Z$ a locally compact Hausdorff space, and a Hermitian line bundle $\calL$ with base space $Z$. If $s$ is a covering map, then we call $(Z,s,\calL)$ a \emph{twisted covering}.
     
      Two twisted local homeomorphisms $(Z,s,\calL)$ and $(Z',s',\calL')$ of $X$ are called isomorphic if there exists a homeomorphism $\phi$ from $Z$ to $Z'$ such that $s'\circ\phi=s$, and such that the pullback bundle $\phi^*\calL'$ is isomorphic to $\calL$. In particular, two local homeomorphisms or covering maps $s:Z\to X$ and $s':Z'\to X$ are isomorphic if there exists a homeomorphism $\phi$ from $Z$ to $Z'$ such that $s'\circ\phi=s$. 
 \end{definition}

The permutation group $S_n$ acts on the set with $n$ elements $\{1,\dots,n\}$ in the obvious way. Let $\mathcal{Z}=(Z,p,X)$ be a fiber bundle with fiber $\{1,\dots,n\}$ and structure group $S_n$ over a locally compact Hausdorff space $X$. It follows from local triviality of $\mathcal{Z}$ that $p$ is an $n$-sheeted covering map. We show in Appendix \ref{appendix A} that every $n$-sheeted covering map over $X$ arises this way, and that two such fiber bundles are isomorphic if and only if the associated coverings are isomorphic. Thus instead of studying the twisted $n$-sheeted covering $(Z,s,\calL)$, we can study the pair $(\mathcal{Z},\calL)$ of fiber bundles, where $\mathcal{Z}=(Z,p,X)$ has structure group $S_n$ and fiber $\{1,\dots,n\}$. 

\begin{definition}\label{def:locally trivializable pair}
    Let $\calL=(L,p_L,Z)$ and $\mathcal{Z}=(Z,p_Z,X)$ be fiber bundles. We call the pair $(\mathcal{Z},\calL)$ \emph{simultaneously trivializable} if there exists an open cover $\{U_i\}_{i\in I}$ of $X$ such that $\mathcal{Z}$ is trivial over each $U_i$, and $\calL$ is trivial over each $p_Z^{-1}(U_i)$. In other words, there exist atlases $(U_i,h_i)_{i\in I}$ and $(p_Z^{-1}(U_i),k_i)_{i\in I}$ of $\mathcal{Z}$ and $\calL$, respectively.
\end{definition}

 \begin{lemma}\label{lem:simultaneously trivializable}
     Let $(\mathcal{Z},\calL)$ be a pair consisting of a fiber bundle $\mathcal{Z}=(Z,p_Z,X)$ with fiber $\{1,\dots,n\}$ and structure group $S_n$, and a fiber bundle $\calL$ whose base space is $Z$. Then $(\mathcal{Z},\calL)$ is simultaneously trivializable. 
 \end{lemma}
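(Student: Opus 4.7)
The plan is to produce, for each point $x\in X$, an open neighborhood $U\subseteq X$ of $x$ over which both $\mathcal{Z}$ trivializes and $\calL$ trivializes over the preimage $p_Z^{-1}(U)$. Collecting such $U$'s over all $x\in X$ yields the desired simultaneous atlas.

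First, I invoke local triviality of $\mathcal{Z}$ to choose a chart $(U_0,h)$ with $x\in U_0$. Since the fiber of $\mathcal{Z}$ is $\{1,\dots,n\}$, this chart decomposes the preimage as a disjoint union $p_Z^{-1}(U_0)=V_1\sqcup\cdots\sqcup V_n$ of $n$ open sets, each mapped homeomorphically onto $U_0$ by $p_Z$. Write $z_i\in V_i$ for the unique point with $p_Z(z_i)=x$.

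Next, using local triviality of $\calL$, I pick for each $i=1,\dots,n$ an open neighborhood $W_i\subseteq V_i$ of $z_i$ over which $\calL$ is trivial. Because $p_Z$ is a local homeomorphism, hence open, the set
\[U\coloneqq\bigcap_{i=1}^{n} p_Z(W_i)\subseteq U_0\]
is an open neighborhood of $x$ in $X$. Here the key point is that the fiber of $\mathcal{Z}$ is \emph{finite}, so this intersection of $n$ open sets is still open and nonempty. Shrinking each $W_i$ to $W_i':=W_i\cap p_Z^{-1}(U)$, we have $p_Z^{-1}(U)=W_1'\sqcup\cdots\sqcup W_n'$ as a disjoint union of open subsets of $Z$, and $\calL$ remains trivial over each $W_i'$.

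Finally, I assemble a trivialization of $\calL$ over $p_Z^{-1}(U)$ from the individual trivializations over the $W_i'$: since the $W_i'$ are pairwise disjoint and open, their trivializations of $\calL$ paste together to a trivialization of $\calL|_{p_Z^{-1}(U)}$. Moreover $\mathcal{Z}$ is trivial over $U\subseteq U_0$ by restricting $h$. Letting $x$ range over $X$ and indexing the resulting neighborhoods gives the open cover required by Definition \ref{def:locally trivializable pair}. There is no real obstacle here; the only subtle ingredient is the finiteness of the fiber of $\mathcal{Z}$, which is what makes the simultaneous shrinking step possible.
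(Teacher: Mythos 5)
Your proof is correct and follows essentially the same strategy as the paper: pick disjoint trivializing neighborhoods for $\calL$ around the $n$ points of the fiber $p_Z^{-1}(x)$, intersect their images under the open map $p_Z$ with a trivializing neighborhood for $\mathcal{Z}$, and use finiteness of the fiber to keep this intersection open. Your write-up is in fact slightly more careful than the paper's, since by taking $W_i$ inside the sheets $V_i$ you make explicit why $p_Z^{-1}(U)$ decomposes into pieces over which $\calL$ is trivial.
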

 \begin{proof}
Take any point $x\in X$ and write $p_Z^{-1}(x)=\{y_1,\dots,y_n\}$. We can find disjoint open sets $U_1,\dots,U_n$ in $Z$ such that $y_i$ lies in $U_i$ and $\calL$ is trivial over $U_i$, for all $i=1,\dots,n$. 

Let $U$ be an open neighborhood of $x$ over which $\mathcal{Z}$ is trivial. Then $V\coloneqq p_Z(U_1)\cap \dots\cap p_Z(U_n)\cap U$ is an open neighborhood of $x$ such that $\mathcal{Z}$ is trivial over $V$, and $\calL$ is trivial over $p_Z^{-1}(V)$. This shows that $(\mathcal{Z},\calL)$ is simultaneously trivializable.
 \end{proof}

\subsection{Principal $\T^n\rtimes S_n$-bundles and twisted coverings}\label{app:simultaneously trivializable pairs}
The permutation group $S_n$ acts on $\T^n$ by associating to $\sigma\in S_n$ the map that sends $(\lambda_1,\dots,\lambda_n)\in\T^n$ to $(\lambda_{\sigma(1)},\dots,\lambda_{\sigma(n)})$. The semidirect product $\T^n\rtimes S_n$ with respect to this action has multiplication and inverse given by 
\[(\lambda,\sigma_1)(\mu,\sigma_2)=(\lambda(\sigma_1\cdot\mu),\sigma_1\sigma_2)\quad\text{and}\quad (\lambda,\sigma)^{-1}=(\sigma^{-1}\cdot\lambda^{-1},\sigma^{-1}),\]
for $\sigma,\sigma_1,\sigma_2\in S_n$ and $\lambda,\mu\in \T^n$. 

Write $I_n\coloneqq\{1,\dots,n\}$. Then $\T^n\rtimes S_n$ acts on the Cartesian product $I_n\times \C$ via
\begin{equation}\label{eq:action of semidirect product}
    (\lambda,\sigma)\cdot(i,z)=(\sigma(i),\lambda_{\sigma(i)}z),
\end{equation}
for $\lambda\in \T^n$, $\sigma\in S_n$, $z\in \C$ and $i\in I_n$. 

 Let $(\mathcal{Z},\calL)$ be a pair consisting of a fiber bundle $\mathcal{Z}=(Z,s,X)$ with fiber $I_n$ and structure group $S_n$, and a Hermitian line bundle $\calL=(L,p_L,Z)$. The the composition $s\circ p_L$ defines a bundle $\mathcal{M}=(L,s\circ p_L,X)$. 
 \begin{lemma}\label{lem:fiber bundle from pair}
     The bundle $\mathcal{M}$ is a fiber bundle with structure group $\T^n\rtimes S_n$ and fiber $\C\times I_n$. The action of the structure group on the fiber is given by (\ref{eq:action of semidirect product}).
 \end{lemma}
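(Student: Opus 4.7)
The plan is to produce an explicit atlas for $\mathcal{M}$ whose transition functions take values in $\T^n\rtimes S_n$ acting on $I_n\times\C$ by \eqref{eq:action of semidirect product}. The key input is Lemma \ref{lem:simultaneously trivializable}, which yields an open cover $\{U_i\}_{i\in I}$ of $X$, trivializations $h_i:U_i\times I_n\to s^{-1}(U_i)$ of $\mathcal{Z}$, and trivializations $k_i$ of $\calL$ over $s^{-1}(U_i)$. Since $s^{-1}(U_i)=\bigsqcup_{k\in I_n}h_i(U_i\times\{k\})$ breaks into $n$ disjoint open sheets, $k_i$ restricts on each sheet to a trivialization $k_i^{(k)}:U_i\times\{k\}\times\C\to p_L^{-1}(h_i(U_i\times\{k\}))$, and these assemble into a homeomorphism
\[
H_i:U_i\times(I_n\times\C)\longrightarrow (s\circ p_L)^{-1}(U_i),\qquad H_i(x,k,z)=k_i^{(k)}(h_i(x,k),z),
\]
which is base-preserving for $s\circ p_L$.

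Next I would compute the transition functions $g_{ij}(x)=H_i^{-1}\circ H_j(x,\cdot,\cdot)$. The $\mathcal{Z}$-atlas supplies continuous maps $\sigma_{ij}:U_i\cap U_j\to S_n$ with $h_j(x,k)=h_i(x,\sigma_{ij}(x)(k))$. Since the $j$-th sheet labeled $k$ coincides with the $i$-th sheet labeled $\sigma_{ij}(x)(k)$, the Hermitian trivializations of $\calL$ on that sheet differ by a continuous $\T$-valued function, which I denote $\mu_{ij}^{(k)}:U_i\cap U_j\to\T$. Combining these gives
\[
g_{ij}(x)(k,z)=\bigl(\sigma_{ij}(x)(k),\,\mu_{ij}^{(k)}(x)\,z\bigr).
\]
Defining $\lambda_{ij}(x)\in\T^n$ by $\lambda_{ij}(x)_m=\mu_{ij}^{(\sigma_{ij}(x)^{-1}(m))}(x)$, the formula above reproduces precisely the action \eqref{eq:action of semidirect product} of $(\lambda_{ij}(x),\sigma_{ij}(x))\in\T^n\rtimes S_n$ on $I_n\times\C$. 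Continuity of $(\lambda_{ij},\sigma_{ij})$ follows from continuity of the $\mu_{ij}^{(k)}$ and $\sigma_{ij}$.

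Finally I would verify the cocycle identity $g_{ij}g_{jk}=g_{ik}$. Its permutation part is just the cocycle identity for the $\sigma_{ij}$, which holds since these come from the atlas of $\mathcal{Z}$. Its $\T^n$-part reduces to the cocycle identity for the line-bundle transitions $\mu_{ij}^{(k)}$, keeping in mind the relabeling of sheets by the permutations; the semidirect-product multiplication is designed exactly so that this bookkeeping works out. Together with the local triviality provided by the $H_i$, this shows that $\mathcal{M}$ is a fiber bundle with fiber $I_n\times\C$ and structure group $\T^n\rtimes S_n$ acting by \eqref{eq:action of semidirect product}.

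The main obstacle I anticipate is the index bookkeeping in the previous paragraph: correctly tracking which sheet-label belongs to which chart, so that the scattered $\T$-valued data $\mu_{ij}^{(k)}$ coalesces into a single well-defined element of $\T^n$ that sits in the correct slot for the semidirect product to reproduce \eqref{eq:action of semidirect product}. Once this indexing convention is pinned down, continuity and the cocycle identity are routine.
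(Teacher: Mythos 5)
Your proposal is correct and follows essentially the same route as the paper: both use Lemma \ref{lem:simultaneously trivializable} to build the product trivialization $k_i\circ(h_i\times\id_\C)$ of $\mathcal{M}$ over $U_i$, compute the transition functions explicitly, and identify them with elements of $\T^n\rtimes S_n$ acting via (\ref{eq:action of semidirect product}); your $\lambda_{ij}(x)_m=\mu_{ij}^{(\sigma_{ij}(x)^{-1}(m))}(x)$ is exactly the paper's $t_{ij}(h_i(x,m))$. The only cosmetic difference is that you split $k_i$ into per-sheet restrictions and explicitly mention the cocycle identity, which is automatic here since the transition functions come from an honest atlas.
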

 \begin{proof}
    By Lemma \ref{lem:simultaneously trivializable} the pair $(\mathcal{Z},\calL)$ is simultaneously trivializable. Thus there exist atlases $(U_i,h_i)_{i\in I}$ and $(s^{-1}(U_i),k_i)_{i\in I}$ of $\mathcal{Z}$ and $\calL$, respectively. We obtain the following commutative diagram: 
     % https://q.uiver.app/#q=WzAsNixbMCwwLCJVX2lcXHRpbWVzIEZfWVxcdGltZXMgRl9MIl0sWzEsMCwicF9ZXnstMX0oVV9pKVxcdGltZXMgRl9MIl0sWzIsMCwicF9MXnstMX0ocF9ZXnstMX0oVV9pKSkiXSxbMCwxLCJVX2lcXHRpbWVzIEZfWSJdLFsxLDEsInBfWV57LTF9KFVfaSkiXSxbMCwyLCJVX2kiXSxbMCwzXSxbMyw1XSxbMSw0XSxbMyw0LCJoX2kiXSxbMCwxLCJoX2lcXHRpbWVzIFxcbWF0aHJte2lkfV97Rl9MfSJdLFsxLDIsImtfaSJdLFs0LDUsInBfWSJdLFsyLDQsInBfTCJdXQ==
\begin{equation}\label{diagram fiber bundle from pair}\begin{tikzcd}
	{U_i\times I_n\times \C} & {s^{-1}(U_i)\times \C} & {p_L^{-1}(s^{-1}(U_i))} \\
	{U_i\times I_n} & {s^{-1}(U_i)} \\
	{U_i}
	\arrow[from=1-1, to=2-1]
	\arrow[from=2-1, to=3-1]
	\arrow[from=1-2, to=2-2]
	\arrow["{h_i}", from=2-1, to=2-2]
	\arrow["{h_i\times \mathrm{id}_{\C}}", from=1-1, to=1-2]
	\arrow["{k_i}", from=1-2, to=1-3]
	\arrow["{s}", from=2-2, to=3-1]
	\arrow["{p_L}", from=1-3, to=2-2]
\end{tikzcd}\end{equation}
 Write $\phi_i\coloneqq k_i\circ(h_i\times\id_{\C})$. It then follows from the above diagram that $\{U_i,\phi_i\}_{i\in I}$ is an atlas of $\mathcal{M}$. In particular $\mathcal{M}$ is locally trivial and has fiber $I_n\times \C$. 
 
 Let $\{\sigma_{ij}:U_i\cap U_j\to S_n\}$ and $\{t_{ij}:s^{-1}(U_i)\cap s^{-1}(U_j)\to\T\}$ be the transition functions of $\mathcal{Z}$ and $\calL$ associated to the atlases above, respectively. We write $\sigma^x_{ij}$ for $\sigma_{ij}(x)$. Take $(x,k,z)$ in $U_i\cap U_j\times I_n\times \C$.  We then have 
 \begin{align*}
 \phi_i^{-1}\circ\phi_j(x,k,z)&=(h_i^{-1}\times\id_{\C})\circ k_i^{-1}\circ k_j\circ (h_j\times\id_{\C})(x,k,z)\\&=(h_i^{-1}\times\id_{\C})\circ k_i^{-1}\circ k_j(h_j(x,k),z)\\&=(h_i^{-1}\times\id_{\C})(h_j(x,k),t_{ij}(h_j(x,k))z)\\&=(x,\sigma_{ij}^x(k),t_{ij}(h_j(x,k))z)\\&=(x,\sigma_{ij}^x(k),t_{ij}(h_i(x,\sigma_{ij}^x(k)))z).
 \end{align*}
 Let $\{g_{ij}:U_i\cap U_j\to \T^n\rtimes S_n\}$ denote the transition functions of the atlas $\{U_i,\phi_i\}$. The above calculation implies 
 \begin{equation}\label{eq:trans fcts I}g_{ij}(x)(k,z)=(\sigma_{ij}^x(k),t_{ij}(h_i(x,\sigma_{ij}^x(k)))z)
 \end{equation}
 for all $x\in U_i\cap U_j$ and all $(k,z)\in I_n\times \C$. Using Equation (\ref{eq:action of semidirect product}), we obtain that for any $x\in U_i\cap U_j$, $g_{ij}(x)$ is the element of $\T^n\rtimes S_n$ given by $(t_{ij}(h_i(x,\cdot)),\sigma_{ij}^x)$, with $t_{ij}(h_i(x,\cdot))$ in $\T^n\cong\prod_{I_n}\T$ mapping $k\in I_n$ to $t_{ij}(h_i(x,k))$. Section \ref{sct:principal bundles and fiber bundles} now tells us that $\mathcal{M}$ is a fiber bundle with fiber $I_n\times \C$ and structure group $\T^n\rtimes S_n$.
 \end{proof}
\begin{lemma}\label{lem:pair from fiber bundle}
    Let $\mathcal{P}=(P,p,X)$ be a principal $\T^n\rtimes S_n$-bundle. Then there are a Hermitian line bundle $\calL=(L,p_L,Z)$, and a fiber bundle $\mathcal{Z}=(Z,s,X)$ with structure group $S_n$ and fiber $\{1,\dots,n\}$, such that the pair $(\mathcal{Z},\calL)$ is simultaneously trivializable. Furthermore, the bundle $(L,s\circ p_L,X)$ is a fiber bundle associated to $\mathcal{P}$.
\end{lemma}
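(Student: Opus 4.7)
My plan is to construct $\mathcal{Z}$ and $L$ as associated bundles to $\mathcal{P}$ using two natural actions of $\T^n \rtimes S_n$; this makes the final assertion, that $(L, s \circ p_L, X)$ is associated to $\mathcal{P}$, automatic and reduces the remaining claims to a verification in local trivializations.

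First, I observe that $\T^n \rtimes S_n$ acts on $I_n$ via the surjective group homomorphism $\rho: (\lambda, \sigma) \mapsto \sigma$ composed with the standard action of $S_n$. This lets me form the associated fiber bundle $\mathcal{Z} = (Z, s, X)$ with fiber $I_n$; since the action factors through $\rho$, the structure group of $\mathcal{Z}$ reduces to $S_n$. I then form a second associated bundle $\mathcal{M} = (L, q, X)$ to $\mathcal{P}$, using the action (\ref{eq:action of semidirect product}) on $I_n \times \C$. Because the projection $I_n \times \C \to I_n$ is equivariant with respect to $\rho$ and the two actions above, Lemma \ref{lem:inducing fiber bundle morphism} yields a continuous bundle morphism $p_L: L \to Z$ with $s \circ p_L = q$, which will serve as the bundle projection for $\calL$.

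To verify simultaneous trivializability of $(\mathcal{Z}, \calL)$ and that $\calL = (L, p_L, Z)$ is a Hermitian line bundle, I fix an atlas $\{(U_i, h_i)\}$ of $\mathcal{P}$ with transition functions $g_{ij}(x) = (\lambda_{ij}(x), \sigma_{ij}(x)) \in \T^n \rtimes S_n$. The induced trivializations identify $s^{-1}(U_i)$ with $U_i \times I_n$ and $q^{-1}(U_i)$ with $U_i \times I_n \times \C$ in such a way that $p_L$ becomes the coordinate projection $U_i \times I_n \times \C \to U_i \times I_n$; hence $\calL$ is trivial with fiber $\C$ over each $s^{-1}(U_i)$, and the pair $(\mathcal{Z}, \calL)$ is simultaneously trivializable in the sense of Definition \ref{def:locally trivializable pair}. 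Reading equation (\ref{eq:trans fcts I}) in reverse, the transition function $t_{ij}$ of $\calL$ satisfies $t_{ij}(h_i(x, m)) = \lambda_{ij}(x)_m \in \T$, which is continuous in $x$ and $\T$-valued. The existence of a Hermitian metric on $\calL$ then follows either from the converse to \cite[Chapter 5, Theorem 7.4]{Husemoller:1993} recalled before Definition \ref{def:twisted local homeo}, or directly from the fact that the action (\ref{eq:action of semidirect product}) preserves the standard Hermitian inner product on the $\C$-factor of the fiber.

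The main technical point is the cocycle identity $t_{ij} t_{jk} = t_{ik}$ on the overlaps $s^{-1}(U_i \cap U_j \cap U_k)$: one has to translate between the $i$-, $j$-, and $k$-trivializations using the $S_n$-valued cocycle $\sigma_{ij}$ and then track how the components of $\lambda_{ij}(x)$ and $\lambda_{jk}(x)$ combine in the semidirect product to give the components of $\lambda_{ik}(x)$. This is precisely the compatibility encoded in the multiplication of $\T^n \rtimes S_n$, so the identity for $t_{ij}$ descends from the cocycle identity for $g_{ij}$. Every remaining step is formal.
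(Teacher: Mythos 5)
Your proposal is correct and follows essentially the same route as the paper: both construct $\mathcal{Z}$ and $\mathcal{M}=\mathcal{P}\times(I_n\times\C)$ as associated bundles, obtain $p_L$ from Lemma \ref{lem:inducing fiber bundle morphism}, and then verify simultaneous trivializability and the $\T$-valued transition functions of $\calL$ by the same local computation. The only cosmetic difference is that the paper computes the transition functions of $\calL$ directly rather than reading Equation (\ref{eq:trans fcts I}) backwards, and it does not single out the cocycle identity for $t_{ij}$, which holds automatically once the $k_i$ are exhibited as charts.
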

\begin{proof}
     Let $\mathcal{M}=(L,p_M,X)$ be the fiber bundle with fiber $I_n\times \C$ associated to $\mathcal{P}$. Here we use the action of $\T^n\rtimes S_n$ on $I_n\times \C$ as given by (\ref{eq:action of semidirect product}). The projection from $\T^n\rtimes S_n$ onto $S_n$ is a group homomorphism. The projection from $I_n\times \C$ to $I_n$ is equivariant. From Lemma \ref{lem:inducing fiber bundle morphism} we obtain a fiber bundle $\mathcal{Z}=(Z,s,X)$ and a surjective fiber bundle morphism $p_L$ from $\mathcal{M}$ to $\mathcal{Z}$. Recall that $\mathcal{Z}$ was constructed from an atlas $(U_i,\phi_i)$ of $\mathcal{M}$. By construction we have $p_M=s\circ p_L$ and there is an atlas $(U_i,h_i)$ of $\mathcal{Z}$ such that the diagram
 % https://q.uiver.app/#q=WzAsNCxbMSwwLCJwX0xeey0xfShwX1leey0xfShVX2kpKSJdLFswLDAsIlVfaVxcdGltZXMgRl9ZXFx0aW1lcyBGX0wiXSxbMCwxLCJVX2lcXHRpbWVzIEZfWSJdLFsxLDEsInBfWV57LTF9KFVfaSkiXSxbMSwyXSxbMCwzLCJwX0wiXSxbMiwzLCJoX2kiXSxbMSwwLCJcXHBoaV9pIl1d
\[\begin{tikzcd}
	{U_i\times I_n\times \C} & {p_L^{-1}(s^{-1}(U_i))} \\
	{U_i\times I_n} & {s^{-1}(U_i)}
	\arrow[from=1-1, to=2-1]
	\arrow["{p_L}", from=1-2, to=2-2]
	\arrow["{h_i}", from=2-1, to=2-2]
	\arrow["{\phi_i}", from=1-1, to=1-2]
\end{tikzcd}\]
 commutes, where the arrow on the left hand side is the projection onto the first two factors.
 Now define a map $k_i$ from $s^{-1}(U_i)\times \C$ to $p_L^{-1}(s^{-1}(U_i))$ by $k_i\coloneqq\phi_i\circ(h_i^{-1}\times\id_{\C})$. We obtain the commutative diagram
% https://q.uiver.app/#q=WzAsNixbMCwwLCJVX2lcXHRpbWVzIEZfWVxcdGltZXMgRl9MIl0sWzEsMCwicF9ZXnstMX0oVV9pKVxcdGltZXMgRl9MIl0sWzIsMCwicF9MXnstMX0ocF9ZXnstMX0oVV9pKSkiXSxbMCwxLCJVX2lcXHRpbWVzIEZfWSJdLFswLDIsIlVfaSJdLFsxLDEsInBfWV57LTF9KFVfaSkiXSxbMCwxLCJoX2lcXHRpbWVzXFxtYXRocm17aWR9X3tGX0x9Il0sWzEsMiwia19pIiwwLHsic3R5bGUiOnsiYm9keSI6eyJuYW1lIjoiZGFzaGVkIn19fV0sWzAsMiwiXFxwaGlfaSIsMCx7ImN1cnZlIjotNH1dLFszLDUsImhfaSJdLFswLDNdLFsxLDVdLFsyLDUsInBfTCJdLFs1LDQsInBfWSJdLFszLDRdXQ==
\begin{equation}\label{diagram pair from fiber bundle}\begin{tikzcd}
	{U_i\times I_n\times \C} & {s^{-1}(U_i)\times \C} & {p_L^{-1}(s^{-1}(U_i))} \\
	{U_i\times I_n} & {s^{-1}(U_i)} \\
	{U_i}
	\arrow["{h_i\times\mathrm{id}_{\C}}", from=1-1, to=1-2]
	\arrow["{k_i}", dashed, from=1-2, to=1-3]
	\arrow["{\phi_i}", curve={height=-24pt}, from=1-1, to=1-3]
	\arrow["{h_i}", from=2-1, to=2-2]
	\arrow[from=1-1, to=2-1]
	\arrow[from=1-2, to=2-2]
	\arrow["{p_L}", from=1-3, to=2-2]
	\arrow["{s}", from=2-2, to=3-1]
	\arrow[from=2-1, to=3-1]
\end{tikzcd}\end{equation}
This shows that $p_L:L\to Z$ defines a bundle with fiber $\C$, and that $(p^{-1}(U_i),k_i)$ is an atlas for this bundle. Write $\calL=(L,p_L,Z)$. 

We now calculate the transition functions of $\calL$. First note that the transition functions $g_{ij}$ of $\mathcal{M}$ have the form 
\begin{equation}\label{eq:trans fct II}
g_{ij}(x)=(t_{ij}(x,1),\dots,t_{ij}(x,n),\sigma_{ij}^x)\in \T^n\rtimes S_n,\qquad x\in U_i\cap U_j,
\end{equation}
where $t_{ij}:U_i\cap U_j\times I_n\to\T$ is a continuous function, and $\sigma_{ij}:U_i\cap U_j\to S_n$ are the transition functions of $\mathcal{Z}$. For $(x,k,z)\in U_i\times I_n\times \C$ we calculate
\begin{align*}k_i^{-1}\circ k_j(h_j(x,k),z) &=k_i^{-1}\circ\phi_j(x,k,z)=h_i\times\id_{\C}(x,g_{ij}(x)(k,z))\\&=h_i\times\id_{\C}(x,\sigma_{ij}^x(k),t_{ij}(x,k)z)=(h_j(x,k),t_{ij}(x,k)z).
 \end{align*}
This shows that the transition functions of $\calL$ take values in $\T$. In conclusion $\calL$ is a fiber bundle with fiber $\C$, structure group $\T$ and an atlas $(p^{-1}(U_i),k_i)$. In particular the pair ($\mathcal{Z}$,$\calL$) is simultaneously trivializable.

Given the pair $(\mathcal{Z},\calL)$, we can form the bundle $(L,s\circ p_L,X)$. By Lemma \ref{lem:fiber bundle from pair} we obtain that it is a fiber bundle with fiber $I_n\times \C$ and structure group $\T^n\rtimes S_n$. It is clear from the fact that $p_M=s\circ p_L$, and from the diagrams (\ref{diagram fiber bundle from pair}) and (\ref{diagram pair from fiber bundle}), that the principal $\T^n\rtimes S_n$-bundle associated to $(L,s\circ p_L,X)$ is isomorphic to $\mathcal{P}$. Hence both constructions are inverse to each other. 
\end{proof}
Two locally trivializable pairs $(\mathcal{Z},\calL)$ and $(\mathcal{Z}',\calL')$ are called isomorphic if there exists an isomorphism of fiber bundles $\Phi$ from $\mathcal{Z}$ to $\mathcal{Z}'$ such that the pullback bundle $\Phi^*\calL'$ is isomorphic to $\calL$. More explicitly, the isomorphism $\Phi$ will be a homeomorphism between the total spaces $Z$ and $Z'$ that is compatible with the fiber bundle structure. Since the base space of $\calL'$ is $Z'$, we can use $\Phi$ to pull $\calL'$ back to $Z$. We then require that this pullback is isomorphic to $\calL$. 
\begin{proposition}\label{prop:simultaneously trivializable pairs and principal bundles}
The constructions from Lemmas \ref{lem:fiber bundle from pair} and \ref{lem:pair from fiber bundle} set up a bijection between isomorphism classes of principal $\T^n\rtimes S_n$-bundles and isomorphism classes of simultaneously trivializable pairs $(\mathcal{Z},\calL)$. 
\end{proposition}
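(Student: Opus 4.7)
The plan is the following. The end of the proof of Lemma \ref{lem:pair from fiber bundle} already shows that starting from a principal $\T^n\rtimes S_n$-bundle $\mathcal{P}$, forming the pair $(\mathcal{Z},\calL)$, and passing back via the construction of Lemma \ref{lem:fiber bundle from pair} recovers a principal bundle isomorphic to $\mathcal{P}$. What remains is to verify (a) the converse direction, that starting from a pair, building the principal bundle, and passing back via Lemma \ref{lem:pair from fiber bundle} recovers an isomorphic pair; and (b) that both constructions are well-defined on isomorphism classes. Once (a) and (b) are in place, the inverse identities established at the object level automatically descend to isomorphism classes, giving the asserted bijection.

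For (a), I would work entirely with transition functions. Fixing a simultaneous trivialization of $(\mathcal{Z},\calL)$, which exists by Lemma \ref{lem:simultaneously trivializable}, produces transition functions $\sigma_{ij}\colon U_i\cap U_j\to S_n$ for $\mathcal{Z}$ and $t_{ij}$ with values in $\T$ for $\calL$. By Equation (\ref{eq:trans fcts I}), the associated principal $\T^n\rtimes S_n$-bundle has transition functions of the form $(t_{ij}(h_i(x,\cdot)),\sigma_{ij}^x)$. Applying the construction of Lemma \ref{lem:pair from fiber bundle} to these transition functions yields a pair whose $\mathcal{Z}$-part has transition functions $\sigma_{ij}$ and whose line-bundle part has transition functions $t_{ij}$ relative to the induced atlases, so the reconstructed pair is isomorphic to the original.

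For (b), in the direction from pairs to principal bundles, an isomorphism $(\mathcal{Z},\calL)\cong(\mathcal{Z}',\calL')$ consists of a bundle isomorphism $\Phi\colon\mathcal{Z}\to\mathcal{Z}'$ together with an isomorphism of Hermitian line bundles $\Psi\colon\calL\cong\Phi^*\calL'$. Composing $\Psi$ with the canonical map $\Phi^*\calL'\to\calL'$ yields a homeomorphism $L\to L'$ intertwining the bundle projections down to $X$, and comparing with diagram (\ref{diagram fiber bundle from pair}) gives the sought isomorphism of the associated $\T^n\rtimes S_n$-bundles. In the reverse direction, Lemma \ref{lem:inducing fiber bundle morphism} applied to the group homomorphism $\T^n\rtimes S_n\to S_n$ and the equivariant projection $I_n\times\C\to I_n$ turns a principal bundle isomorphism into compatible isomorphisms of the associated fiber bundles with fibers $I_n\times\C$ and $I_n$, which together assemble into an isomorphism of pairs.

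The main obstacle lies in the bookkeeping around atlases in the first direction of (b): to use diagram (\ref{diagram fiber bundle from pair}) effectively one needs simultaneous trivializations of $(\mathcal{Z},\calL)$ and $(\mathcal{Z}',\calL')$ over a common atlas of $X$ which are intertwined by $\Phi$ and $\Psi$. This can be arranged by taking a simultaneous trivialization of $(\mathcal{Z},\calL)$, transporting it through $\Phi$ and $\Psi$ to a simultaneous trivialization of $(\mathcal{Z}',\calL')$, and passing to a common refinement; this is standard but needs to be spelled out carefully to ensure that the matching transition functions really produce the isomorphism at the level of total spaces rather than only at the level of cocycles.
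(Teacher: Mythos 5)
Your proposal is correct and follows essentially the same route as the paper: the object-level inverse identities come from the two lemmas (the paper folds your step (a) into the closing claim of Lemma \ref{lem:pair from fiber bundle} that both constructions are inverse to each other), and compatibility with isomorphism classes is verified in both directions exactly as you describe, via the intermediate bundle $\mathcal{M}$, the commutative diagrams, and \cite[Theorem 2.7, Chapter 5]{Husemoller:1993}. The atlas bookkeeping you flag at the end is indeed needed but is left implicit in the paper's proof as well.
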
 
\begin{proof}
With Lemmas \ref{lem:fiber bundle from pair} and \ref{lem:pair from fiber bundle} in mind, the only thing left to show for the bijection is that two principal bundles $\mathcal{P}$ and $\mathcal{P}'$ are isomorphic if and only if the associated pairs $(\mathcal{Z},\calL)$ and $(\mathcal{Z}',\calL')$ are isomorphic. 

Assume that $\mathcal{P}$ and $\mathcal{P}'$ are isomorphic. Then the bundles $\mathcal{M}=\mathcal{P}\times (I_n\times \C)$ and $\mathcal{M}'=\mathcal{P}'\times(I_n\times \C)$ from the proof of Lemma \ref{lem:pair from fiber bundle} are isomorphic. This isomorphism descends to an isomorphism between $\mathcal{Z}$ and $\mathcal{Z}'$. We have the commutative diagram

% https://q.uiver.app/#q=WzAsNSxbMCwyLCJMJyJdLFsxLDIsIlknIl0sWzAsMCwiTCJdLFsxLDAsIlkiXSxbMiwxLCJYIl0sWzIsMywicF9MIl0sWzAsMSwicF97TCd9Il0sWzEsNCwicF97WSd9IiwyXSxbMyw0LCJwX1kiXSxbMiwwLCJcXGNvbmciLDJdLFszLDEsIlxcY29uZyIsMl1d
\[\begin{tikzcd}
	L & Z \\
	&& X \\
	{L'} & {Z'}
	\arrow["{p_L}", from=1-1, to=1-2]
	\arrow["\cong"', from=1-1, to=3-1]
	\arrow["{s}", from=1-2, to=2-3]
	\arrow["\cong"', from=1-2, to=3-2]
	\arrow["{p_{L'}}", from=3-1, to=3-2]
	\arrow["{p_{Z'}}"', from=3-2, to=2-3]
\end{tikzcd}\]
which implies isomorphism of the pairs $(\mathcal{Z},\mathcal{L})$ and $(\mathcal{Z}',\mathcal{L}')$. 

On the other hand, assume that $(\mathcal{Z},\mathcal{L})$ and $(\mathcal{Z},\calL)$ are isomorphic. Then the fiber bundles $\mathcal{M}=(L,s\circ p_L,X)$ and $\mathcal{M}'=(L',s'\circ p_L',X)$ as in Lemma \ref{lem:fiber bundle from pair} are isomorphic. Recall that $\mathcal{P}$ and $\mathcal{P}'$ are defined as the principal bundles associated to $\mathcal{M}$ and $\mathcal{M}'$, respectively. Thus $\mathcal{P}$ and $\mathcal{P}'$ are isomorphic by \cite[Theorem 2.7, Chapter 5]{Husemoller:1993}. 
\end{proof}

 \begin{theorem} \label{thm:twisted coverings and principal bundles}
 There is a bijection between isomorphism classes of twisted $n$-sheeted coverings of $X$ and isomorphism classes of principal $\T^n\rtimes S_n$-bundles over $X$. The twist is trivial if and only if the principal $\T^n\rtimes S_n$-bundle reduces to an $S_n$-bundle.
\end{theorem}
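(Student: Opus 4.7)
The plan is to assemble this theorem from pieces that are already essentially in place in the preceding subsections. The key observation is that a twisted $n$-sheeted covering $(Z,s,\calL)$ is by definition the same data as a pair consisting of (i) an $n$-sheeted covering $s:Z\to X$ and (ii) a Hermitian line bundle $\calL$ over $Z$. By the result in Appendix \ref{appendix A} that is cited right before Definition \ref{def:locally trivializable pair}, $n$-sheeted coverings of $X$ correspond, up to isomorphism, to fiber bundles $\mathcal{Z}=(Z,s,X)$ with fiber $\{1,\dots,n\}$ and structure group $S_n$. So twisted $n$-sheeted coverings of $X$, up to isomorphism, correspond to isomorphism classes of pairs $(\mathcal{Z},\calL)$ where $\mathcal{Z}$ is such a fiber bundle and $\calL$ is a Hermitian line bundle over $Z$.

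Next I would invoke Lemma \ref{lem:simultaneously trivializable}, which asserts that every such pair is automatically simultaneously trivializable. Combining this with Proposition \ref{prop:simultaneously trivializable pairs and principal bundles}, which gives a bijection between isomorphism classes of simultaneously trivializable pairs and isomorphism classes of principal $\T^n\rtimes S_n$-bundles over $X$, yields the first assertion of the theorem. One small point to verify is that the Hermitian structure on $\calL$ is preserved on both sides: on the line-bundle side this is tracked by transition functions taking values in $\T$ rather than $\C^\times$, and on the principal-bundle side this is precisely what ensures the structure group is $\T^n\rtimes S_n$ rather than $(\C^\times)^n\rtimes S_n$. This is already implicit in the statements of Lemmas \ref{lem:fiber bundle from pair} and \ref{lem:pair from fiber bundle}, so no new argument is required.

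For the second statement, I would read off the relationship between the transition functions on each side from Equations (\ref{eq:trans fcts I}) and (\ref{eq:trans fct II}): the $\T^n$-component of $g_{ij}$ encodes the transition functions of $\calL$, and the $S_n$-component encodes those of $\mathcal{Z}$. The line bundle $\calL$ is trivial as a Hermitian line bundle if and only if one can choose an atlas of $\calL$ so that all $t_{ij}\equiv 1$. Under the bijection of Proposition \ref{prop:simultaneously trivializable pairs and principal bundles}, this is equivalent to the transition functions $g_{ij}$ of the associated principal bundle taking values in $\{1\}\times S_n\subset\T^n\rtimes S_n$, i.e.\ to the structure group of the principal bundle reducing to $S_n$.

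The main obstacle, if any, is bookkeeping rather than conceptual: checking that the isomorphism classes match up on the nose between the three perspectives (twisted coverings, pairs $(\mathcal{Z},\calL)$, and principal bundles). In particular, an isomorphism of twisted coverings $(Z,s,\calL)\cong(Z',s',\calL')$ is by Definition \ref{def:twisted local homeo} a base-preserving homeomorphism $\phi:Z\to Z'$ together with an isomorphism $\phi^*\calL'\cong\calL$; this is precisely the notion of isomorphism of pairs used in Proposition \ref{prop:simultaneously trivializable pairs and principal bundles}, so the bijection indeed descends to isomorphism classes. Everything else is already contained in the cited results.
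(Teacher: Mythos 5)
Your proposal is correct and follows essentially the same route as the paper's proof: both reduce via Proposition \ref{prop:covering spaces and principal bundles} to pairs $(\mathcal{Z},\calL)$, then apply Lemma \ref{lem:simultaneously trivializable} and Proposition \ref{prop:simultaneously trivializable pairs and principal bundles}, and read off the triviality criterion from Equations (\ref{eq:trans fcts I}) and (\ref{eq:trans fct II}). The additional remarks about tracking the Hermitian structure and matching isomorphism classes are consistent with what the cited results already provide.
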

\begin{proof}
By Proposition \ref{prop:covering spaces and principal bundles} and Definition \ref{def:twisted local homeo}, isomorphism classes of twisted $n$-sheeted coverings $(Z,s,\calL)$ of $X$ are in bijection with isomorphism classes of pairs $(\mathcal{Z},\calL)$, where $\mathcal{Z}=(Z,s,X)$ is a fiber bundle over $X$ with structure group $S_n$ and fiber $\{1,\dots,n\}$, and $\calL$ is a complex line bundle. Such pairs are simultaneously trivializable by Lemma \ref{lem:simultaneously trivializable}, and their isomorphism classes are in bijection with isomorphism classes of principal $\T^n\rtimes S_n$-bundles over $X$ by Proposition \ref{prop:simultaneously trivializable pairs and principal bundles}. This establishes the first claim. 

It follows from the Equations (\ref{eq:trans fcts I}) and(\ref{eq:trans fct II}) that we can choose the transition functions of the principal $\T^n\rtimes S_n$-bundle $\mathcal{P}$ to lie in $S_n$ if and only if we can choose the transition functions of $\calL$ to be trivial. Thus $\mathcal{P}$ reduces to an $S_n$-bundle if and only if $\calL$ is trivial. This concludes the proof of the theorem. 
\end{proof}

\begin{remark}\label{rem:different groups}
    Whenever we write $\C^n$ for the $n$-dimensional complex vector space, we have chosen an orthonormal basis $\{e_k\}_{k=1}^n$ which we refer to as the canonical or standard basis. Let $M_n$ be the $n\times n$ matrices with complex entries. Let $D_n$ be the diagonal subalgebra in $M_n$ obtained from the canonical basis. Write $N_{D_n}$ for the normalizers of $D_n$ in $M_n$, namely the set of all $a\in M_n$ such that $aD_na^*$ and $a^*D_na$ are contained in $D_n$. Let
    \[\calU(N_{D_n})=\{u\in U(n):u^*D_nu\subset D_n\text{  and } uD_nu^*\subset D_n\}\]
    be the unitary group of $N_{D_n}$. Any element of $\calU(N_{D_n})$ is of the form $\lambda\sigma$, where $\sigma$ is a permutation matrix, and $\lambda$ is a unitary diagonal matrix. We regard $\sigma$ as an element of $S_n$, in such a way that the canonical basis vector $e_k$ is sent to $e_{\sigma(k)}$ for every $k\in\{1,\dots,n\}$. We regard $\lambda$ as an element of $\T^n$ in the obvious way. Sending $\lambda\sigma$ to $(\lambda,\sigma)$ defines a group isomorphism from $\calU(N_{D_n})$ to $\T^n\rtimes S_n$. When working with Theorem \ref{thm:twisted coverings and principal bundles}, we will replace $\T^n\rtimes S_n$ by $\calU(N_{D_n})$. The reason is that there is an obvious action of $\calU(N_{D_n})$ on $\C^n$ by matrix multiplication. Therefore it seems more natural to consider $\calU(N_{D_n})$ when we want to associate vector bundles to $\mathcal{P}$. 
\end{remark}
\begin{remark}\label{rem:compatible atlases}
    Let $\mathcal{P}$ be a principal $\T^n\rtimes S_n$-bundle, and $(Z,s,\calL)$ the corresponding twisted covering of $X$ from Theorem \ref{thm:twisted coverings and principal bundles}. Write $\mathcal{Z}=(Z,s,X)$ for the fiber bundle with fiber $\{1,\dots,n\}$ and structure group $S_n$ associated to the covering $(Z,s)$. As noted in Remark \ref{rem:different groups} there is a canonical action of $\T^n\rtimes S_n\cong \calU(N_{D_n})$ on $\C^n$ by matrix multiplication. Write $\V=(V,p_V,X)$ for the vector bundle $\mathcal{P}\times\C^n$ associated to $\mathcal{P}$. Then an atlas of $\mathcal{P}$ over open sets $(U_i)_{i\in I}$ yields atlases $(U_i,h_i)_{i\in I}$, $(s^{-1}(U_i),k_i)_{i\in I}$ and $(U_i,\psi_i)_{i\in I}$ of $\mathcal{Z}$, $\mathcal{L}$ and $\mathcal{V}$, respectively. For $\mathcal{Z}$ and $\calL$, this follows from the constructions in the proofs of Lemmas \ref{lem:fiber bundle from pair} and \ref{lem:pair from fiber bundle}. For $\V$ it is true because $\V$ is a bundle associated to $\mathcal{P}$. If $\mathcal{M}=\mathcal{P}\times(\{1,\dots,n\}\times\C)$ is the bundle associated to $\mathcal{P}$ using the action (\ref{eq:action of semidirect product}), then setting $\phi_i\coloneqq k_i\circ(h_i\times\id_\C)$, yields an atlas $(U_i,\phi_i)_{i\in I}$ of $\mathcal{M}$. Write $g_{ij}$ for the transition functions of the atlas of $\mathcal{P}$. Then the $g_{ij}$ are also the transition functions for the atlases $(U_i,\psi_i)_{i\in I}$ and $(U_i,\phi_i)_{i\in I}$. The transition functions of the atlas $(U_i,h_i)$ are given by $\sigma_{ij}\coloneqq\rho\circ g_{ij}$, where $\rho:\T^n\rtimes S_n\to S_n$ is the quotient map. If $t_{ij}$ are the transition functions of the atlas $(s^{-1}(U_i),k_i)$, then the equality
    \[
    g_{ij}(x)=(t_{ij}(h_i(x,1)),\dots,t_{ij}(h_i(x,n)),\sigma_{ij}(x))\in \T^n\rtimes S_n
    \]
    holds for all $x\in U_i\cap U_j$. Whenever we are dealing with a twisted covering and the associated vector and $\T^n\rtimes S_n$-bundle, we will work with atlases chosen in this way. 
\end{remark}
\subsection{Isomorphism of Hilbert modules}
Recall from Section \ref{sect:twisted coverings} that a \emph{metric} on a vector bundle $\V=(V,p,X)$ is a continuous map $\beta$ from the total space of $\V\oplus\V$ to $\C$ such that for each $x\in X$, restricting $\beta$ to $p^{-1}(x)\times p^{-1}(x)$ yields an inner product on $p^{-1}(x)$. Given a metric $\beta$ on a vector bundle $\V$, we define the space $\Gamma_0(\V)$ of continuous sections of $\V$ that vanish at infinity. If $\xi$ and $\eta$ are elements of $\Gamma_0(\V)$, then we define an inner product by $\langle\xi,\eta\rangle(x)\coloneqq \beta(\xi(x),\eta(x))$. As shown in \cite[Proposition 3.1]{partI}, this turns $\Gamma_0(\V)$ into a right Hilbert $C_0(X)$-module. For the general theory of Hilbert modules see \cite{Lance:1995}. We usually write $\E$ for a Hilbert module, and $\langle\xi,\eta\rangle$ for the inner product of two elements $\xi,\eta\in\E$.

\begin{definition}\label{def:twisted local homeo module}
     Let $(Z,s,\calL)$ with $s:Z\to X$ be a twisted covering. Denote by $\langle\cdot,\cdot\rangle_\calL$ the $C_0(Z)$-valued inner product on $\Gamma_0(\calL)$ induced by the metric on $\calL$. Then, $\Gamma_0(Z,s,\calL)$ is defined to be the right Hilbert $C_0(X)$-module that as a vector space is $\Gamma_0(\calL)$, and with right multiplication and inner product given by
\begin{align*}
    \langle\xi,\eta\rangle(x)=\sum_{s(y)=x}\langle\xi,\eta\rangle_\calL(y)\qquad\text{and}\quad(\xi f)(y)=\xi(y)f(s(y)) 
\end{align*}
for $\xi,\eta\in \Gamma_0(\calL)$, $f\in C_0(X)$, $x\in X$, and $y\in Z$. 
\end{definition}
\begin{remark}\label{rem:metric atlas}
    Let $\V$ be a rank $n$ vector bundle. Assume that $\V$ has an atlas $(U_i,h_i)_{i\in I}$ whose transition functions take values in the unitary group $U(n)$. Then we obtain a metric $\beta$ on $\V$ by setting $\beta(v,w)\coloneqq \langle h_i^{-1}(v),h_i^{-1}(w)\rangle_{\C^n}$ for all $x\in X$ and $v,w\in\V_x$, where we omit the projection onto the second component of $U_i\times\C^n$. Here $\langle\cdot,\cdot\rangle_{\C^n}$ is the standard inner product on $\C^n$. 

    Conversely, by \cite[Chapter 5 Theorem 7,4]{Husemoller:1993}, a vector bundle with an hermitian metric has an atlas whose transition functions take values in $U(n)$.
\end{remark}
Let $\mathcal{P}$ be a principal $\calU(N_{D_n})$-bundle. As noted in Remark \ref{rem:different groups}, there is a canonical action of $\calU(N_{D_n})$ on $\C^n$ by matrix multiplication. The associated bundle $\mathcal{P}\times\C^n$ is a vector bundle, and it admits a metric since the structure group of $\mathcal{P}\times\C^n$ reduces to $\calU(N_{D_n})$. 
\begin{proposition}\label{prop:module iso}
    Let $(Z,s,\calL)$ be a twisted covering, and let $\mathcal{P}$ be the corresponding principal $\calU(N_{D_n})$-bundle from Theorem \ref{thm:twisted coverings and principal bundles}. Then 
    $\Gamma_0(\mathcal{P}\times\C^n)$ and $\Gamma_0(Z,s,\calL)$ are isomorphic as right Hilbert $C_0(X)$-modules. 
 
    If $(Z,s,\calL)$ and $(Z',s',\calL')$ are two isomorphic twisted coverings, then this induces an isomorphism between the corresponding principal $\calU(N_{D_n})$-bundles $\mathcal{P}$ and $\mathcal{P}'$ by Theorem \ref{thm:twisted coverings and principal bundles}, and the diagram
    % https://q.uiver.app/#q=WzAsNCxbMCwwLCJcXEdhbW1hKFxcbWF0aGNhbHtQfVxcdGltZXNcXG1hdGhiYntDfV5uKSJdLFsxLDAsIlxcR2FtbWEoXFxtYXRoY2Fse1B9J1xcdGltZXNcXG1hdGhiYntDfV5uKSJdLFswLDEsIlxcR2FtbWEocyxcXG1hdGhjYWx7TH0pIl0sWzEsMSwiXFxHYW1tYShzJyxcXG1hdGhjYWx7TH0nKSJdLFsxLDMsIlxcY29uZyJdLFswLDEsIlxcY29uZyJdLFsyLDMsIlxcY29uZyJdLFswLDIsIlxcY29uZyIsMl1d
\begin{equation}\label{diag:module iso}\begin{tikzcd}
	{\Gamma_0(\mathcal{P}\times\mathbb{C}^n)} & {\Gamma_0(\mathcal{P}'\times\mathbb{C}^n)} \\
	{\Gamma_0(Z,s,\mathcal{L})} & {\Gamma_0(Z',s',\mathcal{L}')}
	\arrow["\cong", from=1-1, to=1-2]
	\arrow["\cong"', from=1-1, to=2-1]
	\arrow["\cong", from=1-2, to=2-2]
	\arrow["\cong", from=2-1, to=2-2]
\end{tikzcd}\end{equation}
commutes.
\end{proposition}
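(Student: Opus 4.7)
The plan is to build an isomorphism $\Psi \colon \Gamma_0(\mathcal{P}\times \C^n) \to \Gamma_0(Z,s,\calL)$ directly from the compatible atlases of Remark \ref{rem:compatible atlases}, verify that it respects the right-module and inner-product structures chart by chart, and then promote it to the diagram (\ref{diag:module iso}) by naturality. Concretely, I would fix simultaneous atlases $(U_i, h_i)$ of $\mathcal{Z}=(Z,s,X)$, $(s^{-1}(U_i), k_i)$ of $\calL$, and $(U_i, \psi_i)$ of $\V \coloneqq \mathcal{P}\times\C^n$, all governed by the same cocycle $g_{ij} \in \calU(N_{D_n}) \cong \T^n \rtimes S_n$ with permutation part $\sigma_{ij}$ and diagonal part $(t_{ij}(h_i(x,\cdot)))$. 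For $\xi \in \Gamma_0(\V)$, write $\psi_i^{-1}\circ\xi(x) = (x, f_1^i(x), \ldots, f_n^i(x))$, and set
\[
\Psi(\xi)(h_i(x,k)) \coloneqq k_i(h_i(x,k), f_k^i(x)), \qquad k=1,\ldots,n.
\]

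Next I would verify well-definedness, bijectivity, and preservation of the Hilbert $C_0(X)$-module structure. On $s^{-1}(U_i \cap U_j)$ the change of coordinates $(f_k^j) \leftrightarrow (f_k^i)$ is induced by matrix multiplication with $g_{ij}$; unpacking this with the formula in Remark \ref{rem:compatible atlases} shows that $\sigma_{ij}$ permutes the indices in exactly the way that $h_j$ and $h_i$ reshuffle the sheets of the covering, while the diagonal factors $t_{ij}$ match the transition cocycle of $\calL$, so the local formulas for $\Psi(\xi)$ glue to a global section. Running the same recipe backwards gives $\Psi^{-1}$, hence bijectivity. Compatibility with the right $C_0(X)$-action is immediate, since $s(h_i(x,k))=x$ forces $f_k^i(x) f(x) = f_k^i(x) f(s(h_i(x,k)))$. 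For the inner product, the cocycle of $\V$ takes values in $U(n)$, so the metric is locally the standard one on $\C^n$ and $\langle \xi, \eta\rangle(x) = \sum_{k=1}^n \overline{f_k^i(x)} g_k^i(x)$ where $(g_k^i)$ are the local coordinates of $\eta$; each $k_i$ is a fiberwise isometry (since the cocycle of $\calL$ lies in $\T$), so $\langle \Psi(\xi), \Psi(\eta)\rangle_\calL(h_i(x,k)) = \overline{f_k^i(x)} g_k^i(x)$, and summing over $s^{-1}(x)=\{h_i(x,1),\dots,h_i(x,n)\}$ as in Definition \ref{def:twisted local homeo module} reproduces the same expression.

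For the diagram (\ref{diag:module iso}), an isomorphism $(Z,s,\calL) \cong (Z',s',\calL')$ yields an isomorphism $\mathcal{P} \cong \mathcal{P}'$ by Theorem \ref{thm:twisted coverings and principal bundles}, and thus a vector-bundle isomorphism $\mathcal{P}\times\C^n \cong \mathcal{P}'\times\C^n$; the two horizontal arrows in (\ref{diag:module iso}) are the ensuing maps on global sections. If the atlases for the primed data are chosen as the transports along this isomorphism of the atlases chosen for the unprimed data, then the two paths around the square execute the same local formula, and commutativity follows. The main obstacle is purely notational, namely keeping straight how the semidirect-product cocycle $(t_{ij}, \sigma_{ij})$ acts simultaneously on $\C^n$, on $I_n \times \C$, and on the fibers of $\mathcal{Z}$; once Remark \ref{rem:compatible atlases} is fully exploited, each of the verifications above reduces to a direct computation.
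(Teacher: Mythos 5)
Your construction is exactly the paper's: the map $\xi\mapsto\Psi(\xi)$ with $\Psi(\xi)(h_i(x,k))=k_i(h_i(x,k),f_k^i(x))$ coincides with the paper's $t_i=\phi_i\circ\tau_i\circ h_i^{-1}$, the gluing argument via the cocycle $(t_{ij},\sigma_{ij})$, the fiberwise inner-product computation, and the treatment of Diagram (\ref{diag:module iso}) all match the paper's proof (the paper carries out the diagram check with explicit transition functions $r_i$ rather than transporting the atlases, but this is only a cosmetic difference). The proposal is correct and takes essentially the same route.
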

\begin{proof}

Write $\V=(V,p_V,X)$ for the associated vector bundle $\mathcal{P}\times\C$, and $\mathcal{Z}=(Z,s,X)$ for the fiber bundle corresponding to the covering $(Z,s)$. Let $(U_i,\psi_i)_{i\in I}$, $(U_i,h_i)_{i\in I}$ and  $(s^{-1}(U_i),k_i)_{i\in I}$ be atlases of $\V$, $\mathcal{Z}$ and $\calL$, respectively, chosen as in Remark \ref{rem:compatible atlases}. Then $(U_i,\phi_i)$ is an atlas of the associated bundle $\mathcal{P}\times(\{1,\dots,n\}\times\C)$, where $\phi_i\coloneqq k_i\circ(h_i\times\id_\C)$. 

Let $\xi:X\to V$ be a section of $\V$ vanishing at infinity. Write $\xi_i$ for the restriction of $\xi$ to $U_i$. Then $\psi_i^{-1}\circ\xi_i$ is a map from $U_i$ to $U_i\times\C^n$. Let $v(x)\in\C^n$ be such that $\psi_i^{-1}\circ\xi_i(x)=(x,v(x))$ for all $x\in U_i$. 

Define a map $\tau_i$ from $U_i\times\{1,\dots,n\}$ to $U_i\times\{1,\dots,n\}\times\C$ sending $(x,l)$ to $(x,l,v(x)_l)$, where $v(x)_l$ is the $l$-th component of $v(x)$. Then $t_i\coloneqq \phi_i\circ\tau_i\circ h_i^{-1}$ is a continuous map from $s^{-1}(U_i)$ to $p_L^{-1}(s^{-1}(U_i))$. We want to define a section $t:Z\to L$ of $\calL$ by setting $t(z)\coloneqq t_i(z)$ for $z\in s^{-1}(U_i)$. To show that this is well defined, we need to prove that if $z$ lies in the intersection of $s^{-1}(U_i)$ and $s^{-1}(U_j)$ for $i,j\in I$, then $t_i(z)=t_j(z)$. First, write $\psi_j^{-1}\circ\xi_j(x)=(x,w(x))$ for $w(x)\in\C^n$ and $x\in U_i\cap U_j$. Let $(x,l)\coloneqq h_i^{-1}(z)$ and $(x,k)\coloneqq h_j^{-1}(z)$. Clearly $\xi_i(x)=\xi_j(x)$. Then we have
\begin{align*}
    (x,v(x))=\psi_i^{-1}\circ\xi_i(x)=\psi_i^{-1}\circ\psi_j\circ\psi_j^{-1}\circ\xi_j(x)=(x,g_{ij}(x)w(x))
\end{align*}
and thus $v(x)=g_{ij}(x)w(x)$. Now calculate
\begin{align*}
    \phi_i^{-1}\circ t_j(z)&=\phi_i^{-1}\circ\phi_j\circ\tau_j\circ h_j^{-1}(z)=(x,g_{ij}\cdot(k,w(x)_k))\\&=(x,l,(g_{ij}(x)w(x))_l)=(x,l,v(x)_l)=\tau_i\circ h_i^{-1}(z).
\end{align*}
This implies $t_i(z)=\phi_i\circ\tau_i\circ h_i^{-1}(z)=t_j(z)$. Therefore $t$ is a well defined continuous map from $Z$ to $L$. It is a right inverse of the bundle projection by construction, and thus a section of $\calL$.

Take $\xi_1$ and $\xi_2$ in $\Gamma_0(\V)$, and let $t_1$ and $t_2$ be the sections of $\calL$ as constructed above. Write $\psi_i^{-1}\circ\xi_1(x)=(x,v_1(x))$ and $\psi_i^{-1}\circ\xi_2(x)=(x,v_2(x))$. We have 
\begin{align*}\langle t_1,t_2\rangle(x)&=\sum_{z\in s^{-1}(x)}\langle t_1,t_2\rangle_\calL(z)=\sum_{k=1}^n\overline{k_i^{-1}(t_1(h_i(x,k)))}k_i^{-1}(t_2(h_i(x,k)))\\&=\sum_{k=1}^n \overline{v_1(x)}_kv_2(x)_k=\langle\xi_1,\xi_2\rangle_\V(x) \end{align*}
for any $x\in U_i$, where we choose the metrics on $\calL$ and $\V$ to be compatible with the atlases $(s^{-1}(U_i),k_i)$ and $(U_i,\psi_i)$ as in Remark \ref{rem:metric atlas}. 

This shows that if we define $\Phi$ to send a section $\xi$ of $\V$ to the section $t$ of $\calL$ as constructed above, then we obtain an inner product preserving map from $\Gamma_0(\V)$ to $\Gamma_0(Z,s,\calL)$. The description of $\Phi$ in local coordinates, together with a partition-of-unity argument, yields that $\Phi$ respects $C_0(X)$-multiplication. A similar consideration shows that it is surjective. We conclude that $\Phi$ is an isomorphism from $\Gamma_0(\mathcal{P}\times\C^n)$ to $\Gamma_0(Z,s,\calL)$. 

Now assume that the twisted coverings $(Z,s,\calL)$ and $(Z',s',\calL')$ are isomorphic. This is equivalent to the corresponding $\calU(N_{D_n})$-bundles $\mathcal{P}$ and $\mathcal{P}'$ being isomorphic. Choose atlases as in Remark \ref{rem:compatible atlases}. We adopt the same notation, and add a prime to anything related to $\mathcal{P}'$. The isomorphism from $\mathcal{P}$ to $\mathcal{P}'$ is implemented by functions $r_i:U_i\to \calU(N_{D_n})$ such that $g'_{ij}(x)=r_i(x)^{-1}g_{ij}(x)r_j(x)$ for all $x\in U_i\cap U_j$ (see \cite[Theorem 2.7, Chapter 4]{Husemoller:1993}). These functions are also what implements the isomorphism from $\mathcal{M\coloneqq }\mathcal{P}\times(\{1,\dots,n\}\times\C)$ to $\mathcal{M}'\coloneqq\mathcal{P}'\times(\{1,\dots,n\}\times\C)$, as well as the isomorphism from $\V$ to $\V'$. The isomorphism from $\mathcal{Z}$ to $\mathcal{Z}'$ is implemented by $\tilde{r}_i\coloneqq \rho\circ r_i:U_i\to S_n$, where $\rho:\calU(N_{D_n})\to S_n$ is again the quotient map.

Take a section $\xi:X\to V$, and write $\xi_i$ for its restriction to $U_i$. Let $v(x)\in\C^n$ be such that $\psi_i^{-1}\circ\xi_i(x)=(x,v(x))$, for $x\in U_i$. Then the section $\xi_i'$ obtained from the isomorphism between $\V$ and $\V'$ fulfills $\psi'^{-1}_i\circ\xi_i'(x)=(x,r_i(x)^{-1}v(x))$. Thus going along the upper horizontal and then the right vertical arrow of Diagram (\ref{diag:module iso}) yields a section $t\in\Gamma_0(Z',s',\calL')$, that on $s'^{-1}(U_i)$ is given by $t'_i=\phi_i'\circ\tau_i'\circ h_i'^{-1}$ with $\tau_i'(x,k)=(x,k,(r_i(x)^{-1}v(x))_k)$. 

Going along the left vertical and then the lower horizontal arrow of Diagram (\ref{diag:module iso}) means to first construct $t\in\Gamma_0(Z,s,\calL)$ from $\xi$, and then obtain an element $t''$ of $\Gamma_0(Z',s',\calL')$ from $t$ by using the isomorphisms between $\mathcal{Z}$ and $\mathcal{Z}'$ as well as $\mathcal{M}$ and $\mathcal{M}'$. In local coordinates, we have $\phi_i'\circ t_i''\circ h_i'^{-1}=r_i^{-1}\circ\tau_i\circ\tilde{r}_i$. Calculate
\begin{align*}
    r_i^{-1}\circ\tau_i\circ\tilde{r}_i(x,k)&=r_i^{-1}\circ\tau_i(x,\tilde{r}_i(x)(k))=(x,r_i(x)^{-1}\cdot(\tilde{r}_i(x)(k),v(x)_{\tilde{r}_i(x)(k)}))\\&=(x,k,(r_i(x)^{-1}v(x))_k)=\tau_i'(x,k).
\end{align*}
This shows $t_i'=t_i''$, for any $i\in I$. Thus the diagram commutes. 
\end{proof}
\section{Correspondences}\label{sect:correspondences}

The purpose of this section is to establish the upper left arrow in Diagram (\ref{diag:triangle}), that is, given a sufficiently nice $C^*$-correspondence from $C_0(Y)$ to $C_0(X)$, we will establish a bijection between isomorphism classes of twisted $Y$-$X$ correspondences inducing $\E$, and isomorphism classes of certain atlases of the vector bundle associated to $\E$. 

If $A$ and $B$ are $C^*$-algebras, then a \emph{$C^*$-correspondence from $B$ to $A$} is a right Hilbert $A$-module $\E$ together with a $\ast$-homomorphism $\varphi$ from $B$ to $\calL(\E)$, the adjointable $A$-linear operators on $\E$. The $\ast$-homomorphism $\varphi$ is called the \emph{left action}.

Let $X$ and $Y$ be locally compact Hausdorff spaces. Throughout this section, $\E$ is a proper and nondegenerate $C^*$-correspondence from $C_0(Y)$ to $C_0(X)$ that is $\sigma$-finitely generated and $\sigma$-projective as a right $C_0(X)$-module. Proper means that the left action of $\E$, which we always denote by $\varphi$, takes values in the compact operators $\K(\E)$ of $\E$. Nondegenerate means that the linear span of $\varphi(C_0(Y))\E$ is dense in $\E$. Finally, a Hilbert $C_0(X)$-module is $\sigma$-finitely generated and $\sigma$-projective if its restriction to every compact subset $K$ of $X$ is finitely generated and projective. This is equivalent to the existence of a vector bundle $\V$ over an open subset of $X$ such that $\Gamma_0(\V)$ is isomorphic to $\E$ as right Hilbert $C_0(X)$-modules, see \cite[Theorem 3.4]{partI}.

\begin{remark}\label{rem:constant rank}
    In virtually all proofs in the following two sections, we can assume without loss of generality that the vector bundle $\V$ is defined on all of $X$. Otherwise, replace $X$ with the open set on which $\V$ is defined. This is equivalent to $\E$ being right full, meaning that the linear span of $\langle\E,\E\rangle$ is equal to $C_0(X)$. If the Hilbert module is constructed from a twisted covering $(Z,s,\calL)$ as in Definition \ref{def:twisted local homeo module}, then fulness is equivalent to $s$ being surjective. Furthermore it is sufficient to consider the case that the vector bundle $\V$ has constant rank. For if it does not, then because of local triviality we can partition the space $X$ into countably many clopen sets over which the rank of $\V$ is constant. We can then run the arguments on each of the clopen sets separately. Every point $x$ of $X$ has a neighborhood fully contained in the clopen set to which $x$ belongs. Since we will only be concerned with properties that should hold either pointwise or on small neighborhoods, this will prove the statements in general. 
\end{remark}
The following is a twisted version of the topological correspondences from \cite[Definition 1.2]{katsura:2004_2}. In particular, if the line bundle $\calL$ is trivial, we precisely recover \cite[Definition 1.2]{katsura:2004_2}.
\begin{definition}\label{def:topological correspondence}
Let $X$ and $Y$ be locally compact Hausdorff spaces. A \emph{twisted topological $Y$-$X$-correspondence} is a quadrupel $(Z,r,s,\calL)$ consisting of a twisted local homeomorphism $(Z,s,\calL)$ in the sense of Definition \ref{def:twisted local homeo}, and a continuous map $r$ from $Z$ to $Y$ called the \emph{range map}. In this context, $s$ is called the \emph{source map}. We say that $(Z,r,s,\calL)$ is \emph{left proper} if $r$ is proper and \emph{right proper} if $s$ is proper onto its image. The latter implies that $s$ is a covering map. A proper twisted $Y$-$X$-correspondence is both left and right proper.

Recall that in Definition \ref{def:twisted local homeo module} we associated a right Hilbert $C_0(X)$-module $\Gamma_0(Z,s,\calL)$ to a twisted covering $(Z,s,\calL)$. Given a twisted topological $Y$-$X$ correspondence $(Z,r,s,\calL)$, we define the $C^*$-correspondence $\Gamma_0(Z,r,s,\calL)$ from $C_0(Y)$ to $C_0(X)$ to be $\Gamma_0(Z,s,\calL)$ as a right Hilbert $C_0(X)$-module, with left action
\[
\varphi(f)\xi(z)=f(r(z))\xi(z),
\]
for $z\in Z$, $f\in C_0(Y)$ and $\xi\in \Gamma(Z,r,s,\calL)$.
\end{definition}
\begin{remark} 

    The twisted topological correspondence $(Z,r,s,\calL)$ is left proper if and only if the $C^*$-correspondence $\Gamma_0(Z,r,s,\calL)$ is proper.

   Topological spaces can be considered as étale groupoids with only identity arrows. A topological $Y$-$X$-correspondence (without the twist) is the same as a groupoid correspondence from $X$ to $Y$ in the sense of \cite[Definition 3.1]{antunes:2022}. It is left proper in the sense of Definition \ref{def:topological correspondence} if and only if it is proper as a groupoid correspondence in the sense of \cite[Definition 3.3]{antunes:2022}.
   
   If we take $X$ and $Y$ to be the same space, then Definition \ref{def:topological correspondence} yields the definition of twisted topological graphs as in \cite[Definition 3.2]{Li:2017}, also see \cite[Section 2]{KumjianLi:2017}. 
\end{remark}

Let $\E$ be a $C^*$-correspondence from $C_0(Y)$ to $C_0(X)$. We call a twisted $Y$-$X$-correspondence $(Z,r,s,\calL)$ \emph{associated to} $\E$ if $\Gamma_0(Z,r,s,\calL)$ and $\E$ are isomorphic as $C^*$-correspondences. We will also say that $\E$ is associated to $(Z,r,s,\calL)$. Any $C^*$-correspondence associated to a proper twisted topological $Y$-$X$-correspondence is proper, nondegenerate, $\sigma$-finitely generated and $\sigma$-projective. 

\subsection{Obtaining an atlas from a correspondence}\label{sect:atlas from corr}

Let $(Z,s,\calL)$ be a twisted $n$-sheeted covering of $X$ and let $\V$ be the vector bundle associated to $\Gamma_0(Z,s,\calL)$. We have seen in Theorem \ref{thm:twisted coverings and principal bundles} and Proposition \ref{prop:module iso} that there exists a principal $\calU(N_{D_n})$-bundle $\mathcal{P}$ such that $\Gamma_0(\mathcal{P}\times\C^n)$ is isomorphic to $\Gamma_0(Z,s,\calL)$, as a right Hilbert $C_0(X)$-module. Hence $\V$ is isomorphic to $\mathcal{P}\times\C^n$, and thus has an atlas whose transition functions take values in $\calU(N_{D_n})$. The twisted covering can be reconstructed from the principal bundle, which is determined up to isomorphism by the transition functions.

Generalizing from twisted coverings to twisted topological correspondences $(Z,r,s,\calL)$, we might therefore expect that the vector bundle of the $C^*$-correspondence $\Gamma_0(Z,r,s,\calL)$ has an atlas which somehow encodes the underlying topological correspondence. However, we have to ask more from this atlas then just having $\calU(N_{D_n})$-valued transition functions, since we need to incorporate the left action. In Definition \ref{def:diagonalizing atlas} we will identify the key property of such atlases. 

In Proposition \ref{prop:correspondence to atlas} and Theorem \ref{thm:corr and atlas one to one}, we will consider vector bundles of nonconstant rank. The following definition is meant to avoid cumbersome notation in such cases. 

\begin{definition}\label{def:normalizing atlas}
Let $\V$ be a vector bundle, let $(U_i,h_i)_{i\in I}$ be an atlas of $\V$, and write $g_{ij}$ for the transition functions. Then the atlas is called \emph{normalizing} if $g_{ij}(x)$ lies in $\calU(N_{D_{n_x}})$ for all $x\in U_i\cap U_j$ and all $i,j\in I$. 
\end{definition}
 
\begin{remark}\label{rem:frame}
Let $\V$ be a vector bundle, and let $(U,h)$ be a chart of $\V$. Assume that the closure of $U$ is compact. Choose any nonnegative function $\gamma$ whose open support is equal to $U$. Write $e_k$ for the $k$-th unit vector in $\C^n$, where $n$ is the rank of $\V$ over $U$. Then setting $\xi_{k}(x)\coloneqq h(x,\gamma(x) e_k)$ defines sections $\xi_k\in\Gamma_0(\V)$, for all $k\in\{1,\dots,n\}$. It follows from this definition that $(\xi_1(x),\dots,\xi_n(x))$ is an orthogonal basis for the fiber $\V_x$ over any point $x$ in $U$. 

If $(U_i,h_i)_{i\in I}$ is an atlas for $\V$, then we obtain sections $\xi_{ik}$ for all $i\in I$. The set $\{\xi_{ik}\}_{i,k}$ forms a frame for $\Gamma_0(\V)$, see \cite[Lemma 2.6]{partI}. This means that for all $\eta\in \E$ we have 
\[\eta=\sum_{i,k}\xi_{ik}\langle\xi_{ik},\eta\rangle\]
where the sum, if it is infinite, converges in norm. 
\end{remark}

We now define the property that makes it possible to keep track of the left action when considering atlases of $\V$. A version of this appeared in \cite[Corollary 6.5]{KaliszewskiPataniQuigg:2013}.
\begin{definition}\label{def:diagonalizing atlas}
Let $(U,h)$ be a chart of $\V$ such that the closure of $U$ is compact. Then we say that the chart $(U,h)$ \emph{diagonalizes the left action} if $h(x,e_k)$ is an eigenvector of $\varphi(f)_x$ for all $x\in U$, all $k$ and all $f\in C_0(Y)$.

If $(U_i,h_i)_{i\in I}$ is an atlas of $\V$ such that each $U_i$ has compact closure, then we say that it diagonalizes the left action if it consists of charts diagonalizing the left action.  
 \end{definition}
 Note that a chart $(U,h)$ of $\V$ diagonalizes the left action if and only if $\xi_k(x)$ is an eigenvector of $\varphi(f)_x$, for all $x\in U$, all $k$ and all $f\in C_0(Y)$.
 
Let $\mathcal{P}$ be the a principal $\calU(N_{D_n})$-bundle. Given an atlas of $\mathcal{P}$, denote by $(U_i,\phi_i)_{i\in I}$ the induced atlas of the associated bundle $\mathcal{P}\times\C^n$. We assume that all $U_i$ have compact closure. Given a partition of unity $\{\gamma_i\}_{i\in I}$ subordinate to $\{U_i\}_{i\in I}$, we can define sections $\xi_{ik}$ as in Remark \ref{rem:frame}.

Let $(Z,s,\calL)$ be the twisted covering associated to $\mathcal{P}$. The atlas of $\mathcal{P}$ induces an atlas of $(Z,s,\calL)$ over the same open sets, by which we mean base-preserving trivializations $h_i:U_i\times\{1,\dots,n\}\to s^{-1}(U_i)$ and $k_i:s^{-1}(U_i)\times\C\to p_L^{-1}(s^{-1}(U_i))$. Given the partition of unity $\{\gamma_i\}_{i\in I}$ from the previous paragraph, define functions $\gamma_{ik}$ from $U_i\times\{1,\dots,n\}$ to $[0,1]$ by $\gamma_{ik}(x,l)=\delta_{lk}\gamma_i(x)$, where $x\in U_i$ and $\delta_{lk}$ is the Kronecker delta. In words, $\gamma_{ik}$ equals $\gamma_i$ on the $k$-th copy of $U_i$, and is zero everywhere else. For any $y\in s^{-1}(U_i)$, define $\Tilde{\xi}_{ik}(y)=k_i(y,\gamma_{ik}(h_i^{-1}(y))$. Then $\tilde{\xi}_{ik}$ is a continuous section of $\calL$ vanishing at infinity, and hence an element of $\Gamma_0(Z,s,\calL)$.  

\begin{lemma}\label{lem:canonical sections}
    The isomorphism between $\Gamma_0(\mathcal{P}\times\C^n)$ and $\Gamma_0(Z,s,\calL)$ from Proposition \ref{prop:module iso} sends $\xi_{ik}$ to $\tilde{\xi}_{ik}$.
\end{lemma}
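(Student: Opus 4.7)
The plan is to unpack the construction of the isomorphism $\Phi$ from Proposition \ref{prop:module iso} in the local coordinates $(U_i, \psi_i)$ of $\V$ and then compare the outcome directly with the definition of $\tilde{\xi}_{ik}$. All the relevant maps sit inside the compatible commutative diagrams of Remark \ref{rem:compatible atlases}, so this will amount to a coordinate-level identification on $s^{-1}(U_i)$ together with a short support argument elsewhere.

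First I would fix $i \in I$ and $k \in \{1,\dots,n\}$ and record that $\psi_i^{-1}\circ\xi_{ik}(x) = (x,\gamma_i(x)e_k)$ on $U_i$, while $\xi_{ik}$ vanishes outside $U_i$. Feeding this into the recipe from the proof of Proposition \ref{prop:module iso}, the vector $v(x)$ appearing there is $\gamma_i(x)e_k$, whose $l$-th entry is exactly $\delta_{lk}\gamma_i(x) = \gamma_{ik}(x,l)$. Hence
\[
\tau_i(x,l) = (x,l,\gamma_{ik}(x,l)),
\]
and using $\phi_i = k_i\circ(h_i\times\id_\C)$ one computes, for $y \in s^{-1}(U_i)$ with $h_i^{-1}(y) = (x,l)$,
\[
\Phi(\xi_{ik})(y) = \phi_i\circ\tau_i\circ h_i^{-1}(y) = k_i\bigl(y,\gamma_{ik}(h_i^{-1}(y))\bigr) = \tilde{\xi}_{ik}(y).
\]
This is exactly the desired agreement on $s^{-1}(U_i)$.

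It remains to show that $\Phi(\xi_{ik})$ vanishes on $Z \setminus s^{-1}(U_i)$, since $\tilde{\xi}_{ik}$ does so by construction. For this I would take any other chart $(U_j,\psi_j)$ and note that $\xi_{ik}(x) = 0$ as a vector in $\V_x$ whenever $x \in U_j \setminus U_i$, because $\gamma_i(x) = 0$. Writing $\psi_j^{-1}\circ\xi_{ik}(x) = (x,v(x))$, we therefore have $v(x) = 0$ on $U_j \setminus U_i$. Thus $\tau_j(x,m) = (x,m,0)$ there, so that $\Phi(\xi_{ik})(y) = k_j(y,0)$ is the zero vector of the fiber $\calL_y$ for every $y \in s^{-1}(U_j \setminus U_i)$. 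Combined with the previous paragraph this gives $\Phi(\xi_{ik}) = \tilde{\xi}_{ik}$ pointwise on $Z$.

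There is no conceptual obstacle beyond bookkeeping with the three interlocking atlases $(U_i,\psi_i)$, $(U_i,h_i)$ and $(s^{-1}(U_i),k_i)$ from Remark \ref{rem:compatible atlases}. The one place where care is required is in insisting on the specific form $\phi_i = k_i\circ(h_i\times\id_\C)$ prescribed by that compatible choice, so that the $\C$-coordinate produced by $\tau_i$ can be read off directly as $\gamma_{ik}$ with no intervening transition cocycle; if one instead used a generic atlas of $\mathcal{P}\times(I_n\times\C)$, the computation would pick up extra $\T$-valued factors that would have to be absorbed back into the identification.
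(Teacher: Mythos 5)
Your proposal is correct and is essentially the paper's own argument carried out in full: the paper's proof consists of the single remark that the claim is obvious from the local-coordinate description of the isomorphism in Proposition \ref{prop:module iso}, and your computation of $\Phi(\xi_{ik})$ via $v(x)=\gamma_i(x)e_k$, $\tau_i$, and $\phi_i=k_i\circ(h_i\times\id_\C)$, together with the vanishing argument off $s^{-1}(U_i)$, is exactly that verification written out.
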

\begin{proof}
    This is obvious if one considers what the isomorphism from $\Gamma_0(\mathcal{P}\times\C^n)$ to $\Gamma_0(Z,s,\calL)$ looks like in local coordinates, which was described in the proof of Proposition \ref{prop:module iso}.
\end{proof}
\begin{proposition}\label{prop:correspondence to atlas}
Let $(Z,r,s,\calL)$ be a proper twisted $Y$-$X$-correspondence associated to $\E$. Then $\V$, the vector bundle such that $\Gamma_0(\V)\cong\E$ as right Hilbert $C_0(X)$-modules, has a normalizing atlas diagonalizing the left action. 
\end{proposition}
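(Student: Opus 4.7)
The plan is to exploit the identifications from Section \ref{sect:twisted coverings and principal bundles}. By Theorem \ref{thm:twisted coverings and principal bundles} and Proposition \ref{prop:module iso}, the underlying twisted covering $(Z,s,\calL)$ corresponds to a principal $\calU(N_{D_n})$-bundle $\mathcal{P}$ with $\V\cong\mathcal{P}\times\C^n$ and $\Gamma_0(\V)\cong\Gamma_0(Z,s,\calL)$ as right Hilbert $C_0(X)$-modules. Any atlas of $\mathcal{P}$ induces a normalizing atlas of $\V$, so I start with such an atlas $(U_i,h_i)_{i\in I}$, refining if necessary so that every $U_i$ has compact closure (possible since $X$ is locally compact Hausdorff, and by Remark \ref{rem:constant rank} I may assume the rank is constant and $s$ surjective).

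Next, I fix a partition of unity $\{\gamma_i\}_{i\in I}$ subordinate to $\{U_i\}_{i\in I}$ and form the frame $\{\xi_{ik}\}$ of $\Gamma_0(\V)$ as in Remark \ref{rem:frame}. By Lemma \ref{lem:canonical sections}, under the isomorphism $\Gamma_0(\V)\cong\Gamma_0(Z,r,s,\calL)$ these sections correspond to the sheet-concentrated sections $\tilde{\xi}_{ik}$, each supported on $h_i(U_i\times\{k\})\subset s^{-1}(U_i)$.

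The core calculation is then straightforward: for any $f\in C_0(Y)$,
\[
(\varphi(f)\tilde{\xi}_{ik})(y)=f(r(y))\tilde{\xi}_{ik}(y),
\]
and because $\tilde{\xi}_{ik}$ is supported on the $k$-th sheet, the only point in $s^{-1}(x)\cap\supp\tilde{\xi}_{ik}$ is $y=h_i(x,k)$. Transferring back to $\V$, this yields
\[
\varphi(f)_x\,\xi_{ik}(x)=f(r(h_i(x,k)))\,\xi_{ik}(x)
\]
for all $x\in U_i$. Writing $\xi_{ik}(x)=\gamma_i(x)h_i(x,e_k)$, on the open support $V_i\coloneqq\{x:\gamma_i(x)\neq 0\}$ of $\gamma_i$ the vector $h_i(x,e_k)$ is itself an eigenvector of $\varphi(f)_x$ with eigenvalue $f(r(h_i(x,k)))$.

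The conclusion is that $(V_i,h_i|_{V_i\times\C^n})_{i\in I}$ is a normalizing atlas of $\V$ that diagonalizes the left action: normalization is preserved under restriction since the transition functions simply restrict, each $V_i$ is contained in $U_i$ and thus still has compact closure, and the $V_i$ cover $X$ because $\{\gamma_i\}$ is a partition of unity. The main subtlety, rather than a real obstacle, is the passage from the $\xi_{ik}$-eigenvector property to the $h_i(\cdot,e_k)$-eigenvector property — which is why one must shrink $U_i$ to the open support of $\gamma_i$; everywhere else the partition-of-unity factor kills the information. All remaining steps are routine applications of the machinery developed in Section \ref{sect:twisted coverings and principal bundles}.
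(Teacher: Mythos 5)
Your proof is correct and follows essentially the same route as the paper: pass to the principal $\calU(N_{D_n})$-bundle via Theorem \ref{thm:twisted coverings and principal bundles}, use Lemma \ref{lem:canonical sections} to identify the frame sections $\xi_{ik}$ with the sheet-supported sections $\tilde{\xi}_{ik}$, and read off the eigenvector property from the pointwise formula for the left action. The only (harmless) divergence is your shrinking of $U_i$ to the open support $V_i$ of $\gamma_i$; the paper avoids this step by requiring in Remark \ref{rem:frame} that $\gamma_i$ have open support equal to all of $U_i$, so that $\xi_{ik}(x)\neq 0$ for every $x\in U_i$.
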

\begin{proof}

We assume that $\V$ has constant rank $n$, or equivalently that $(Z,s,\calL)$ is a twisted $n$-sheeted covering. Then Theorem \ref{thm:twisted coverings and principal bundles} yields a principal $\calU(N_{D_n})$-bundle such that $\Gamma_0(Z,s,\calL)$ and $\Gamma_0(\mathcal{P}\times\C^n)$ are isomorphic as right Hilbert $C_0(X)$-modules. Using this isomorphism we can define a left action on $\Gamma_0(\mathcal{P}\times\C^n)$ making it a $C^*$-correspondence isomorphic to $\E$. To ease notation we will not distinguish between the left actions on $\Gamma_0(\mathcal{P}\times\C^n)$ and $\E$, and write $\varphi$ for both of them. Note that $\V$ is isomorphic to $\mathcal{P}\times\C^n$, and hence a normalizing atlas of $\mathcal{P}\times\C^n$ diagonalizing the left action will give rise to a normalizing atlas of $\V$ diagonalizing the left action of $\E$. 

Take an atlas of $\mathcal{P}$, and let $(U_i,\psi_i)_{i\in I}$ be the induced atlas of the associated bundle $\mathcal{P}\times\C^n$. From Remark \ref{rem:compatible atlases} we obtain an atlas of $(Z,s,\calL)$ over the same open sets, meaning  fiber-preserving trivializations $h_i:U_i\times\{1,\dots,n\}\to s^{-1}(U_i)$ and $k_i:s^{-1}(U_i)\times\C\to p_L^{-1}(s^{-1}(U_i))$. Construct the sections $\xi_{ik}\in\Gamma_0(\mathcal{P}\times\C^n)$ and $\tilde{\xi}_{ik}\in\Gamma_0(Z,s,\calL)$ as above. We know from Lemma \ref{lem:canonical sections} that the isomorphism between $\Gamma_0(Z,s,\calL)$ and $\Gamma_0(\mathcal{P}\times\C^n)$ maps $\xi_{ik}$ to $\tilde{\xi}_{ik}$, for all $i\in I$ and $k=1,\dots,n$. 

Take $x\in U_i$ and $y\in s^{-1}(x)$. We have 
\[\varphi(f)\Tilde{\xi}_{ik}(y)=f(r(y))k_i(y,\gamma_{ik}(h_i^{-1}(y))).\]
By definition of $\gamma_{ik}$, the right hand side is only nonzero if $h_i^{-1}(y)=(x,k)$, in which case it equals $f(r(h_i(x,k)))k_i(y,\gamma_i(x))$. This yields
\[\varphi(f)\Tilde{\xi}_{ik}(y)=f(r(h_i(x,k)))\Tilde{\xi}_{ik}(y).\]
Note that $f(r(h_i(x,k)))$ is independent of which $y\in s^{-1}(x)$ we choose. Hence, expressing the equality in terms of the sections $\xi_{ik}$, which we are allowed to do if we identify $\Gamma_0(\mathcal{P}\times\C^n)$ with $\Gamma_0(Z,s,\calL)$ by virtue of Lemma \ref{lem:canonical sections}, we obtain
\[\varphi(f)_x\xi_{ik}(x)=(\varphi(f)\xi_{ik})(x)=f(r(h_i(x,k)))\xi_{ik}(x)\]
for an arbitrary $f\in C_0(X)$. 
In particular, $\xi_{ik}(x)$ is an eigenvector of $\varphi(f)_x$ for every $f\in C_0(X)$. This shows that the atlas $(U_i,\psi_i)_{i\in I}$ diagonalizes the left action. 
\end{proof}

\subsection{Obtaining a topological correspondence from an atlas} \label{sec:corr from atlas}
Let $\E$ be a proper, nondegenerate, $\sigma$-finitely generated and $\sigma$-projective $C^*$-correspondence from $C_0(Y)$ to $C_0(X)$ such that the associated vector bundle $\V$ has a normalizing atlas diagonalizing the left action. We assume that $\V$ has constant rank $n$, see Remark \ref{rem:constant rank}. Then $\V$ is of the form $\mathcal{P}\times\C^n$ for a principal $\calU(N_{D_n})$-bundle $\mathcal{P}$. Write $\varphi$ for the left action on $\E=\Gamma_0(\mathcal{P}\times\C^n)$.

Theorem \ref{thm:twisted coverings and principal bundles} yields a twisted $n$-sheeted covering $(Z,s,\calL)$ such that $\Gamma_0(Z,s,\calL)$ and $\Gamma_0(\mathcal{P}\times\C^n)$ are isomorphic Hilbert $C_0(X)$-modules. To obtain a topological correspondence, we require a range map. 

\begin{remark}\label{rem:gelfand}
    In the following proposition, and in several other places throughout the paper, we will use a non-unital version of Gelfand--Naimark (see \cite{nlab:gelfand_duality}). It asserts that there is an equivalence of categories between the category of commutative non-unital $C^*$-algebras with nondegenerate $\ast$-homomorphisms, and the category of locally compact Hausdorff spaces and proper maps. Here a $\ast$-homomorphism is called nondegenerate if the two-sided ideal generated by its image is dense in the codomain. 

    If the spaces we deal with happen to be compact, then we can use the standard formulation of the Gelfand--Naimark theorem for unital $C^*$-algebras. 
\end{remark}

\begin{proposition}\label{prop:constructing range map}
There exists a unique proper continuous map $r$ from $Z$ to $Y$ such that the $C^*$-correspondences $\E$ and $\Gamma_0(Z,r,s,\calL)$ are isomorphic. 
\end{proposition}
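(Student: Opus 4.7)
By Remark~\ref{rem:constant rank} I reduce to the case that $\V$ has constant rank $n$, so that $\V \cong \mathcal{P}\times\C^n$ for a principal $\calU(N_{D_n})$-bundle $\mathcal{P}$ and $(Z,s,\calL)$ is the twisted covering associated to $\mathcal{P}$ via Theorem~\ref{thm:twisted coverings and principal bundles}. I fix a normalizing atlas $(U_i,\psi_i)_{i\in I}$ of $\V$ diagonalizing the left action and, as in Remark~\ref{rem:compatible atlases}, the induced atlases $(U_i,h_i)$ of $\mathcal{Z}$ and $(s^{-1}(U_i),k_i)$ of $\calL$. I then build the frame sections $\xi_{ik}\in\Gamma_0(\V)\cong\E$ from Remark~\ref{rem:frame} using nonnegative functions $\gamma_i$ with open support $U_i$; by Lemma~\ref{lem:canonical sections} they correspond to $\tilde\xi_{ik}\in\Gamma_0(Z,s,\calL)$.

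The idea is to read off $r$ pointwise from the eigenvalues of $\varphi$ on the frame. At each $x\in U_i$ with $\gamma_i(x)>0$ the diagonalizing hypothesis gives scalars $\lambda_{ik}(f)(x)$ with $\varphi(f)_x\,\xi_{ik}(x) = \lambda_{ik}(f)(x)\,\xi_{ik}(x)$, and the explicit formula
\[
\lambda_{ik}(f)(x) \;=\; \frac{\langle \xi_{ik},\varphi(f)\xi_{ik}\rangle(x)}{\langle \xi_{ik},\xi_{ik}\rangle(x)}
\]
shows $\lambda_{ik}(f)$ is continuous on $\{\gamma_i>0\}$. Since the image of $\varphi$ is commutative, each $\varphi(f)_x$ is normal and $\xi_{ik}(x)$ is a simultaneous eigenvector, hence $f\mapsto\lambda_{ik}(f)(x)$ is multiplicative and $\ast$-preserving, i.e.\ a character of $C_0(Y)$. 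Nondegeneracy of $\varphi$ forbids this character from being zero (otherwise $\xi_{ik}(x)$ would be orthogonal to $\varphi(C_0(Y))\E$ at $x$), so Remark~\ref{rem:gelfand} yields a unique $y_{ik}(x)\in Y$, and I set $r(h_i(x,k))\coloneqq y_{ik}(x)$.

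The crux is consistency across charts. Suppose $h_i(x,k) = h_j(x,l)$ with $x\in U_i\cap U_j$ and $\gamma_i(x),\gamma_j(x)>0$. The normalizing transition function $g_{ij}(x)\in\calU(N_{D_n})$ factors as a unitary diagonal matrix times a permutation, so it sends $e_k$ to a unit scalar multiple of $e_l$; consequently $\xi_{ik}(x)$ and $\xi_{jl}(x)$ are nonzero scalar multiples of each other, and the eigenvalue on their common one-dimensional eigenspace is independent of the representative, so $\lambda_{ik}(f)(x)=\lambda_{jl}(f)(x)$ and the two characters agree. Continuity of $r$ is local from continuity of the $\lambda_{ik}(f)$ combined with functoriality of Gelfand, and global because $s$ is a local homeomorphism and the $s^{-1}(\{\gamma_i>0\})$ cover $Z$. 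For properness, the frame $\{\xi_{ik}\}$ identifies the ``fiberwise diagonal'' subalgebra of $\K(\E)$ with $C_0(Z)$, and by construction the composite $C_0(Y)\xrightarrow{\varphi}\K(\E)\to C_0(Z)$ is exactly $f\mapsto f\circ r$; nondegeneracy of $\varphi$ makes this composite nondegenerate, so the non-unital Gelfand--Naimark equivalence (Remark~\ref{rem:gelfand}) forces $r$ to be proper.

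Finally, $\varphi(f)\tilde\xi_{ik}(z) = f(r(z))\tilde\xi_{ik}(z)$ holds on the frame by construction, so by $C_0(X)$-linearity the module isomorphism of Proposition~\ref{prop:module iso} promotes to an isomorphism $\E\cong\Gamma_0(Z,r,s,\calL)$ of $C^*$-correspondences. Uniqueness is then immediate: any other proper continuous $r'$ realising the same isomorphism would assign the same eigenvalue to $\xi_{ik}(x)$ for every $f$, hence the same character, hence $r'=r$. The main obstacle I anticipate is the overlap compatibility above: it is precisely the \emph{normalizing} condition on the atlas that prevents chart changes from mixing frame vectors into genuine linear combinations, and without it the fiberwise characters would depend on the chart and never descend to a well-defined map on $Z$.
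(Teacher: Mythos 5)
Your proposal is correct and follows essentially the same route as the paper: both read off $r$ from the eigenvalues of $\varphi(f)_x$ on the frame vectors $\xi_{ik}(x)$, use the normalizing condition on the transition functions to get chart-independence, invoke properness and nondegeneracy of $\E$ to land in $C_0(Z)$ nondegenerately, and apply non-unital Gelfand duality for properness of $r$; uniqueness is argued identically by separating points with functions. The only difference is cosmetic — you build $r$ pointwise via characters and then check continuity, whereas the paper assembles the $\ast$-homomorphism $\psi:C_0(Y)\to C_0(Z)$ first and obtains $r$ as its Gelfand dual in one step.
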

\begin{proof}

By assumption, $\V=\mathcal{P}\times\C^n$ has a normalizing atlas $(U_i,\phi_i)$ diagonalizing the left action. Let $\xi_{ik}$ be sections as in Definition \ref{def:diagonalizing atlas}. Then $\xi_{ik}(x)$ is an eigenvector of $\varphi(f)_x$ for any $x\in U_i$ and $f\in C_0(Y)$. 

For any $f\in C_0(X)$, define a map $\mu_{ik}(f)$ to send $x\in U_i$ to the eigenvalue of $\varphi(f)_x$ associated to $\xi_{ik}(x)$. This means we have $\varphi(f)_x\xi_{ik}=\mu_{ik}(f)(x)\xi_{ik}(x)$. The eigenvalues of $\varphi(f)_x$ depend continuously on $x$, and hence $\mu_{ik}(f)$ is an element of $C_b(U_i)$. 

As in Remark \ref{rem:compatible atlases}, the atlas $(U_i,\phi_i)$ of $\mathcal{P}\times\C^n$ yields an atlas $(U_i,h_i)$ trivializing the covering $s:Z\to X$. Define a map $\psi$ from $C_0(Y)$ to $C_0(Z)$  by sending $f\in C_0(Y)$ to the function on $Z$ that sends $z\in h_i(U_i\times\{k\})$ to $\psi(f)(z)=\mu_{ik}(f)(s(z))$. To show that this is well defined, we need to show that if $z$ lies both in $ h_i(U_i\times\{k\})$ and in $ h_j(U_j\times\{l\})$ for $i$ and $j$ such that $U_i\cap U_j\neq\emptyset$, then $\mu_{ik}(f)(s(z))=\mu_{jl}(f)(s(z))$. Write $x\coloneqq s(z)$. We have $l=\sigma_{ji}(x)k$, where $\sigma_{ji}:U_i\cap U_j\to S_n$ is the transition function of the covering $s:Z\to X$, see Appendix \ref{appendix A}.

Then 
\[\xi_{ik}(x)=\phi_i(x,\gamma_i(x)e_k)=\phi_j(x,\gamma_i(x)g_{ji}(x)e_k),\]
where $g_{ji}=\phi_j^{-1}\circ\phi_i$ is the transition function. Since $g_{ji}(x)$ is an element of $\calU(N_{D_n})$, it is a product of a permutation matrix with a unitary diagonal matrix. By Remark \ref{rem:compatible atlases}, the permutation matrix we obtain from $g_{ij}$ is exactly the matrix associated to $\sigma_{ji}(x)$. Hence we have $g_{ji}(x)e_k=\lambda e_l$ for a suitable $\lambda\in\T$. This yields
\begin{align*}\xi_{ik}(x)=\lambda \phi_j(x,\gamma_i(x)e_l)=\lambda\gamma_j(x)^{-1}\gamma_i(x)\phi_j(x,\gamma_j(x)e_l)=\lambda\gamma_j(x)^{-1}\gamma_i(x)\xi_{jl}(x).\end{align*}
Recall that by assumpion the open support of $\gamma_j$ is equal to $U_j$, which means that the expression $\gamma_j(x)^{-1}$ makes sense. Hence
\begin{align*}
\mu_{ik}(f)(x)\xi_{ik}(x)&=\varphi(f)_x\xi_{ik}(x)=\lambda_k\gamma_j(x)^{-1}\gamma_i(x)\varphi(f)_x\xi_{jl}(x)\\&=\lambda\gamma_j(x)^{-1}\gamma_i(x)\mu_{jl}(f)(x)\xi_{jl}(x)\\&=\mu_{jl}(f)(x)\xi_{ik}(x).
\end{align*}
Since $\xi_{i,k}(x)\neq 0$ this implies $\mu_{ik}(f)(s(z))=\mu_{jl}(f)(s(z))$. Hence, $\psi(f)$ is a well defined complex-valued map on $Z$, for every $f\in C_0(X)$. It is continuous, since its restriction to each $h_i(U_i\times\{k\})$ is. That $\psi$ is a $\ast$-homomorphism also follows from the fact that it is locally. 

Since $\E$ is proper, $\varphi(f)$ is a compact operator for every $f\in C_0(Y)$. Hence for every $\varepsilon>0$ there exists a compact set $K$ such that the norm of $\varphi(f)_x$ is less then $\varepsilon$, for all $x$ not in $K$ (see \cite[Proposition 3.5]{partI}. Hence all eigenvalues of $\varphi(f)_x$ have modulus smaller then $\varepsilon$. We obtain that $\psi(f)$ vanishes at infinity. Furthermore, $\psi$ is nondegenerate. This follows from nondegeneracy of $\E$. 

Therefore, $\psi$ is is a well defined nondegenerate $\ast$-homomorphism from $C_0(Y)$ to $C_0(Z)$. By Gelfand duality we obtain a proper continuous map $r\coloneqq\phi^*$ from $Z$ to $Y$. 

Write $\varphi_r$ for the left action of $\Gamma_0(Z,r,s,\calL)$ as in Definition \ref{def:topological correspondence}. We regard $\varphi_r$ as a left action on $\Gamma_0(\mathcal{P}\times\C^n)$.

Let $x\in U_i$, and $k\in\{1,\dots,n\}$. By the proof of Proposition \ref{prop:correspondence to atlas} we have $\varphi_r(f)\xi_{ik}(x)=f(r(h_i(x,k)))\xi_{ik}(x)$. We obtain
\begin{align*}\varphi_r(f)\xi_{ik}(x)&=f(r(h_i(x,k)))\xi_{ik}(x)=\psi(f)(h_i(x,k))\xi_{i,k}(x)=\mu_{ik}(f)(x)\xi_{i,k}(x)\\&=\varphi(f)\xi_{i,k}(x).\end{align*}
This is enough to show $\varphi_r=\varphi$, since the $\xi_{ik}$ form a frame of $\Gamma_0(\mathcal{P}\times\C^n)$ (see Remark \ref{rem:frame}). 

Lastly, we want to show that $r$ is uniquely determined. Take $r':Z\to Y$ such that there exists $z\in Z$ with $r(z)\neq r'(z)$. Write $x=s(z)$ and take $i$ and $k$ such that $x\in U_i$ and $z=h_i(x,k)$. Then there exists $f\in C_0(Y)$ such that $f(r(z))\neq f(r'(z))$, which implies
\[\varphi_{r}(f)\xi_{ik}(x)=f(r(h_i(x,k)))\xi_{ik}(x)=f(r(z))\xi_{ik}(x)\neq f(r'(z))\xi_{ik}(x)=\varphi_{r'}(f)\xi_{ik}(x)\]
and hence $\varphi_r\neq\varphi_{r'}$. This finishes the proof. 
\end{proof} 

\subsection{Equivalence of atlases}\label{sect:equiv of atlases}

  Take two atlases $(U_i,h_i)$ and $(U_i,h_i')$ of vector bundles $\V$ and $\V'$, respectively, and let $g_{ij}$ and $g_{ij}'$ be their transition functions. Assume that the dimension of $\V_x$ is equal to the dimension of $\V_x'$ for all $x\in X$. We regard $g_{ij}$ and $g_{ij}'$ as functions from $U_i\cap U_j$ into $\mathcal{U}(\K)$ with $\K$ the compact operators on a separable Hilbert space. It is understood that $g_{ij}(x)$ and $g_{ij}'(x)$ lie in $\mathcal{U}(M_{n_x})$ for $x\in U_i\cap U_j$, where $n_x$ is the rank of $\V$ and $\V'$ over $x$. The systems of transition functions are called \emph{equivalent} if there exist continuous functions $r_i:U_i\to\mathcal{U}(\K)$, with $r_i(x)$ lying in $\mathcal{U}(M_{n_x})$ for all $x\in U_i$, such that $g_{ij}'(x)=r_i(x)^{-1}g_{ij}(x)k_j(x)$ holds for all $x\in U_i\cap U_j$, see \cite[Definition 2.6, Chapter 5]{Husemoller:1993}.  Any choice of such functions $r_i$ yields an isomorphism between $\V$ and $\V'$, see \cite[Theorem 2.7, Chapter 5]{Husemoller:1993}. This isomorphism is given by sending $v\in\V_{x}$ for $x\in U_i$ to $h_i'(x,r_i(x)^{-1}h_i^{-1}(x,v))$.

\begin{definition}\label{def:equivalence diagonalizing atlases}
Let $\E$ and $\E'$ be two proper nondegenerate $\sigma$-finitely generated and $\sigma$-projective $C^*$-correspondences from $C_0(Y)$ to $C_0(X)$, and let $\V$ and $\V'$ be vector bundles over $X$ such that $\Gamma_0(\V)\cong\E$ and $\Gamma_0(\V')\cong\E'$ as right Hilbert $C_0(X)$-modules. Assume that the dimension of $\V_x$ is equal to the dimension of $\V_x'$ for all $x\in X$. Let $(U_i,h_i)$ and $(U_i,h_i')$ be two normalizing atlases of $\V$ and $\V'$, respectively, diagonalizing the left actions of $\E$ and $\E'$. Then the atlases are called \emph{equivalent} if they have equivalent systems of transition functions, and if there exists a choice of functions $r_i$ implementing this equivalence such that $r_i(x)\in\calU(N_{D_{n_x}})$ for all $x\in U_i$, and such that the induced isomorphism between $\Gamma_0(\V)$ and $\Gamma_0(\V')$ intertwines the left actions $\varphi$ and $\varphi'$. 
\end{definition}

\begin{definition}
We say that two twisted $Y$-$X$-correspondences $(Z,r,s,\calL)$ and $(Z',r',s',\calL')$ are isomorphic if there exists a homeomorphism $\phi$ from $Z$ to $Z'$ fulfilling $s'\circ\phi=s$ and $r'\circ\phi=r$, and such that the pullback $\phi^*\calL'$ of $\calL'$ to $Z$ is isomorphic to $\calL$.
\end{definition}

The following theorem generalizes \cite[Corollary 6.2]{frausino2023} to the case of twisted correspondences, in particular taking the left action into account. It establishes the first arrow in Diagram (\ref{diag:triangle}). 

\begin{theorem}\label{thm:corr and atlas one to one}
The constructions from Propositions \ref{prop:correspondence to atlas} and \ref{prop:constructing range map} set up a bijection between isomorphism classes of proper twisted $Y$-$X$-correspondences $(Z,r,s,\calL)$ associated to $\E$, and equivalence classes of normalizing atlases of $\V$ diagonalizing the left action of $\E$. Here equivalence of atlases is in the sense of Definition \ref{def:equivalence diagonalizing atlases}. The twist $\calL$ is trivial if and only if there exists an atlas in the equivalence class whose transition functions take values in the permutation matrices.  
\end{theorem}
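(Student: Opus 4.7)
The plan is to assemble the inverse constructions from Propositions \ref{prop:correspondence to atlas} and \ref{prop:constructing range map}, verify that they descend to isomorphism and equivalence classes respectively, and then read off the last assertion from Theorem \ref{thm:twisted coverings and principal bundles}. By Remark \ref{rem:constant rank} we may assume throughout that $\V$ has constant rank $n$, so that $\V \cong \mathcal{P} \times \C^n$ for a principal $\calU(N_{D_n})$-bundle $\mathcal{P}$. Write $\Psi$ for the construction of Proposition \ref{prop:correspondence to atlas} (which produces an atlas from a twisted correspondence after choosing an atlas of $\mathcal{P}$), and $\Phi$ for the construction obtained by combining Theorem \ref{thm:twisted coverings and principal bundles}, Proposition \ref{prop:module iso} and Proposition \ref{prop:constructing range map} (which produces a twisted correspondence from a normalizing atlas diagonalizing $\varphi$).

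First I would show that $\Phi$ and $\Psi$ are mutually inverse at the level of representatives. Starting from $(Z, r, s, \calL)$, the atlas produced by $\Psi$ comes from an atlas of the underlying principal bundle $\mathcal{P}$ via Remark \ref{rem:compatible atlases}; running $\Phi$ on it recovers $(Z, s, \calL)$ up to isomorphism by Theorem \ref{thm:twisted coverings and principal bundles} and Proposition \ref{prop:module iso}, and the uniqueness clause in Proposition \ref{prop:constructing range map} forces the reconstructed range map to coincide with $r$, since both induce the same left action on $\E$. Conversely, applying $\Phi$ to a normalizing atlas diagonalizing $\varphi$ and then $\Psi$ returns an atlas arising from another choice of trivialization of the same bundle $\mathcal{P}$, which is equivalent to the original one in the sense of Definition \ref{def:equivalence diagonalizing atlases} once the compatibility with the left action is checked.

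Second, I would verify descent to the appropriate classes. An isomorphism $\phi \colon (Z, r, s, \calL) \to (Z', r', s', \calL')$ induces an isomorphism of the associated principal $\calU(N_{D_n})$-bundles by Theorem \ref{thm:twisted coverings and principal bundles}, which by \cite[Chapter 5, Theorem 2.7]{Husemoller:1993} is implemented by continuous functions $r_i \colon U_i \to \calU(N_{D_n})$ that relate the transition functions; the identity $r' \circ \phi = r$ combined with the local description of $\varphi$ in the proof of Proposition \ref{prop:correspondence to atlas} shows that the resulting module isomorphism intertwines the two left actions, establishing equivalence of atlases. Conversely, if two atlases are equivalent via functions $r_i$ valued in $\calU(N_{D_n})$ whose induced module isomorphism intertwines $\varphi$ and $\varphi'$, then the corresponding principal bundles, and hence the underlying twisted coverings, are isomorphic, and uniqueness of the range map in Proposition \ref{prop:constructing range map} produces an isomorphism of twisted correspondences.

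The triviality claim follows from the second half of Theorem \ref{thm:twisted coverings and principal bundles}: the line bundle $\calL$ is trivial if and only if $\mathcal{P}$ reduces to an $S_n$-bundle, and by Remark \ref{rem:different groups} this is equivalent to the existence of an atlas whose transition functions take values in the permutation matrices. The main obstacle is the verification that $\Psi$ descends to equivalence classes: one must check that the intertwiner functions $r_i$ really land in $\calU(N_{D_{n_x}})$ rather than merely in $U(n_x)$, and are compatible with the left actions in the strong sense demanded by Definition \ref{def:equivalence diagonalizing atlases}. This is precisely where the normalizing property and the diagonalization of $\varphi$ become essential: the canonical frames $\xi_{ik}$ are simultaneous eigenvectors of all $\varphi(f)$, so under any bundle isomorphism intertwining the left actions they can only be permuted and rescaled by phases, forcing each $r_i(x)$ into $\calU(N_{D_{n_x}})$ and matching the eigenvalue data on both sides.
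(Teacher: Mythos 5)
Your proposal is correct and follows essentially the same route as the paper: reduce to constant rank, observe that the two constructions are mutually inverse on representatives, and then show via Theorem \ref{thm:twisted coverings and principal bundles}, Proposition \ref{prop:module iso} (commutativity of Diagram (\ref{diag:module iso})) and the uniqueness clause of Proposition \ref{prop:constructing range map} that isomorphism of twisted correspondences matches equivalence of atlases, with the triviality claim read off from Theorem \ref{thm:twisted coverings and principal bundles}. Your closing worry about forcing the $r_i$ into $\calU(N_{D_{n_x}})$ is already handled automatically in this direction, since they arise as the implementing functions of an isomorphism of principal $\calU(N_{D_n})$-bundles.
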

\begin{proof}
In view of Propositions \ref{prop:correspondence to atlas} and \ref{prop:constructing range map}, all we have left to show to establish the bijection is that two twisted $Y$-$X$-correspondences are isomorphic if and only if the associated atlases are equivalent. Without loss of generality (see Remark \ref{rem:constant rank}),we can restrict to the case that $\V$ has constant rank $n$. 

Let $(Z,r,s,\calL)$ and $(Z',r',s',\calL')$ be two isomorphic twisted $Y$-$X$-correspondences associated to $\E$. Let $\mathcal{P}$ and $\mathcal{P}'$ be the principal $\calU(N_{D_n})$-bundles corresponding to the twisted coverings $(Z,s,\calL)$ and $(Z',s',\calL')$ as in Theorem \ref{thm:twisted coverings and principal bundles}. By Theorem \ref{thm:twisted coverings and principal bundles} and Proposition \ref{prop:module iso} that $\mathcal{P}$ and $\mathcal{P}'$ are isomorphic, and that the induced isomorphism from $\mathcal{P}\times\C^n$ to $\mathcal{P}'\times\C^n$ makes Diagram (\ref{diag:module iso}) commute. 

Recall from Proposition \ref{prop:correspondence to atlas} how we obtained a normalizing atlas $(U_i,h_i)$ of $\mathcal{P}\times\C^n$ diagonalizing the left action. We chose an atlas $(U_i,\phi_i)$ of $\mathcal{P}$, and then showed that the induced atlas of $\mathcal{P}\times\C^n$ is indeed normalizing and diagonalizes the left action. Of course the same holds for $(Z',r',s',\calL')$ and $\mathcal{P}'$, with atlases $(U_i,\phi_i')$ and $(U_i,h_i')$ of $\mathcal{P}'$ and $\mathcal{P}'\times\C^n$, respectively. Now since $\mathcal{P}$ and $\mathcal{P}'$ are isomorphic, the atlases $(U_i,\phi)$ and $(U_i,\phi_i')$ must have equivalent systems of transition functions. Thus there exist functions $r_i$ from $U_i$ to $\calU(N_{D_{n}})$ implementing this equivalence.

The atlas $(U_i,h_i)$ has the same transition functions as $(U_i,\phi_i)$, and the same holds for $(U_i,h_i')$ and $(U_i,\phi_i')$. Hence we can use the functions $r_i$ to define an isomorphism from $\mathcal{P}\times\C^n$ to $\mathcal{P}'\times\C^n$ as in \cite[Theorem 2.7, Chapter 5]{Husemoller:1993}. The resulting map is exactly the same as the isomorphism from Proposition \ref{prop:module iso} that makes Diagram (\ref{diag:module iso}) commute. By commutativity of Diagram (\ref{diag:module iso}) the isomorphism from $\Gamma_0(\mathcal{P}\times\C^n)$ to $\Gamma_0(\mathcal{P}'\times\C^n)$ intertwines the left actions. This shows that $(U_i,h_i)$ and $(U_i,h_i')$ are equivalent atlases in the sense of Definition \ref{def:equivalence diagonalizing atlases}.

 Now let $(U_i,h_i)$ and $(U_i,h_i')$ be two normalizing atlases of the vector bundle associated to $\E$ that diagonalize the left action. Assume that they are equivalent in the sense of Definition \ref{def:equivalence diagonalizing atlases}. This implies that there is an isomorphism between the principal $\calU(N_{D_n})$-bundles $\mathcal{P}$ and $\mathcal{P}'$ such that the induced isomorphism from $\Gamma_0(\mathcal{P}\times\C^n)$ to $\Gamma_0(\mathcal{P}'\times\C^n)$ intertwines the left action. From Theorem \ref{thm:twisted coverings and principal bundles} and Proposition \ref{prop:module iso} we obtain an isomorphism of twisted coverings $(Z,s,\calL)$ and $(Z',s',\calL')$ making Diagram (\ref{diag:module iso}) commute. Now uniqueness of the range map in Proposition \ref{prop:constructing range map} yields that the twisted topological correspondences $(Z,r,s,\calL)$ and $(Z',r',s',\calL)$ are isomorphic. 

The last statement of the theorem, namely that the twist $\calL$ is trivial if and only if there exists an atlas in the equivalence class whose transition functions take values in the permutation matrices, follows from the according statement in Theorem \ref{thm:twisted coverings and principal bundles}.
\end{proof}
\section{Cartan subalgebras in the compacts}\label{sect:perspective of the compacts}
In this section we fill in the two missing arrows of Diagram (\ref{diag:triangle}), and show that the diagram commutes up to isomorphism.
\subsection{Cartan subalgebras and atlases}
Let $\E$ be a proper nondegenerate $\sigma$-finitely generated and $\sigma$-projective $C^*$-correspondence from $C_0(Y)$ to $C_0(X)$. Write $\varphi$ for its left action, and $\V$ for the vector bundle associated to $\E$ via \cite[Theorem 3.4]{partI}. In this section we show that specifying an atlas of $\V$ that is normalizing and diagonalizes the left action $\varphi$, is the same thing as specifying a Cartan subalgebra inside of $\K(\E)$ which contains the image of $\varphi$.

Let $\aut(M_n,D_n)$ denote the group of automorphisms $\alpha$ of $M_n$ such that $\alpha(D_n)=D_n$. Note that since all automorphisms of $M_n$ are inner, elements of this group are precisely those automorphisms which are implemented by a unitary in $\calU(N_{D_n})$. Explicitly, we have
\[\aut(M_n,D_n)=\{\Ad(u):u\in\calU(N_{D_n})\},\]
where $\Ad(u)$ denotes the inner automorphism induced by $u$.

If $\E$ is a $\sigma$-finitely generated and $\sigma$-projective Hilbert $C_0(X)$-module all of whose fibers have rank $n$, then $\K(\E)$ is a continuous $C_0(X)$-algebra with fibers isomorphic to $M_n$, see \cite[Section 3.2]{boenicke:2018}. In particular $\K(\E)$ is $n$-homogeneous, so we can apply Li's and Renault's work from \cite{LiRenault:2019}. Any $n$-homogeneous $C^*$-algebra can be viewed as the space of sections of a fiber bundle with fibers $M_n$, structure group $\aut(M_n)$ and base space the primitive ideal space. There is a principal $\aut(M_n)$-bundle associated to this fiber bundle, and the content of \cite[Proposition 2.2]{LiRenault:2019} is that this principal bundle reduces to a principal $\aut(M_n,D_n)$-bundle if and only if the $n$-homogeneous $C^*$-algebra has a Cartan subalgebra.

We obtain that $K(\E)$ has a Cartan subalgebra if and only if the associated principal $\aut(M_n)$-bundle reduces to an $\aut(M_n,D_n)$-bundle. Since we have pointed out above that $\aut(M_n,D_n)$ is the group of all such automorphisms which are implemented by elements of $\calU(N_{D_n})$, it is perhaps not surprising that this happens precisely when the vector bundle associated to $\E$ has an atlas whose transition function take values in $\calU(N_{D_n})$.
This is the content of the next proposition. 
\begin{remark}\label{rem:transition functions vector and mn bundle}
 Let $\V$ be a rank $n$ vector bundle over $X$, and write $\mathcal{B}$ for the fiber bundle such that $\K(\Gamma_0(\V))=\Gamma_0(\mathcal{B})$. Note that the fiber of $\mathcal{B}$ over a point $x\in X$ is given by $\mathrm{End}(\V_x)\cong M_n$. Let $(U_i,h_i)$ be an atlas of $\V$ with transition functions $g_{ij}:U_i\cap U_j\to \mathcal{U}(M_n)$. We define $H_i:U_i\times M_n\to \mathcal{B}_{U_i}$ by $H_i(x,a)v=h_i(x,ah_i^{-1}(x,v))$ for $v\in \V_x$. Then $(U_i,H_i)$ is an atlas of $\mathcal{B}$ with transition functions $\tilde{g}_{ij}:U_i\cap U_j\to \aut(M_n)$ given by $\tilde{g}_{ij}(x)=\Ad(g_{ij}(x))$.
 \end{remark}
\begin{proposition}\label{prop:transition functions vector and mn bundle}
If the transition functions $g_{ij}$ of an atlas of $\V$ take values in $\calU(N_{D_n})$, then the transition functions $\Ad(g_{ij})$ of the induced atlas of $\mathcal{B}$ as in Remark \ref{rem:transition functions vector and mn bundle} take values in $\aut(M_n,D_n)$. Every atlas of $\mathcal{B}$ whose transition functions take values in $\aut(M_n,D_n)$ arises this way. 
\end{proposition}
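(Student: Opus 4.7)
The first assertion is immediate. If $g_{ij}(x)\in\calU(N_{D_n})$, then by definition $g_{ij}(x)$ normalizes $D_n$, so $\Ad(g_{ij}(x))$ preserves $D_n$ and thus lies in $\aut(M_n,D_n)$.

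For the converse, let $(U_i,H_i)_{i\in I}$ be an atlas of $\mathcal{B}$ whose transition functions $\tilde g_{ij}$ take values in $\aut(M_n,D_n)$. The plan is to construct local charts $h_i\colon U_i\times\C^n\to\V|_{U_i}$ such that the construction from Remark \ref{rem:transition functions vector and mn bundle} applied to $(U_i,h_i)$ recovers $(U_i,H_i)$. After refining the cover if necessary, I may assume that $\V$ is trivial over each $U_i$ and that every principal $\T$-bundle over $U_i$ is trivial; this can be arranged by taking sufficiently small open neighborhoods, since both conditions hold locally. For each $i$, fix a trivialization $\bar h_i\colon U_i\times\C^n\to\V|_{U_i}$, and let $\bar H_i$ be the associated chart of $\mathcal{B}$. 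Since $H_i$ and $\bar H_i$ are two trivializations of $\mathcal{B}$ over the same open set, they differ by a continuous map $\alpha_i\colon U_i\to\aut(M_n)$. Lift $\alpha_i$ along the quotient map $U(n)\to\aut(M_n)$ to a continuous $u_i\colon U_i\to U(n)$, which is possible because the pulled-back $\T$-bundle over $U_i$ is trivial by choice of cover. Setting $h_i(x,v)\coloneqq\bar h_i(x,u_i(x)v)$, a direct computation in local coordinates shows that the chart of $\mathcal{B}$ induced by $h_i$ via Remark \ref{rem:transition functions vector and mn bundle} coincides with $H_i$.

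It then remains to observe that the transition functions $g_{ij}\coloneqq h_i^{-1}\circ h_j$ take values in $\calU(N_{D_n})$. By construction $\Ad\circ g_{ij}=\tilde g_{ij}$, and since $\tilde g_{ij}(x)\in\aut(M_n,D_n)$ we have $g_{ij}(x)D_ng_{ij}(x)^*\subseteq D_n$ and symmetrically, so $g_{ij}(x)\in\calU(N_{D_n})$.

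The main obstacle is the local lifting step, which passes from a continuous map into $\aut(M_n)$ to a continuous lift into $U(n)$; this amounts to triviality of a certain $\T$-principal bundle over $U_i$. It is what forces the preliminary refinement of the cover, as the obstruction can be nontrivial on large open sets. Everything else is essentially a bookkeeping exercise in the correspondence between bases of $\V_x$ and algebra isomorphisms $M_n\to\End(\V_x)$.
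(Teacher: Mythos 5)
Your argument follows the same strategy as the paper's proof: compare the given $\aut(M_n,D_n)$-valued atlas of $\mathcal{B}$ with the atlas induced by an arbitrary atlas of $\V$, lift the resulting $\aut(M_n)$-valued comparison maps to unitaries after refining the cover, and correct the charts of $\V$ accordingly. (The paper phrases this at the level of transition functions and a scalar ambiguity $\lambda_{ij}\in\T$; you phrase it at the level of charts. That difference is cosmetic.) Two points need attention.

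First, the concluding step has a genuine gap. From $\Ad(g_{ij}(x))=\tilde g_{ij}(x)\in\aut(M_n,D_n)$ you may only conclude that $g_{ij}(x)$ normalizes $D_n$, i.e.\ that $g_{ij}(x)$ is a nonzero scalar multiple of an element of $\calU(N_{D_n})$; membership in $\calU(N_{D_n})$ also requires $g_{ij}(x)$ to be \emph{unitary}, and nothing in your construction guarantees this, because you fixed arbitrary trivializations $\bar h_i$ of $\V$. Indeed $g_{ij}=u_i^{*}\,\bar g_{ij}\,u_j$ with $u_i,u_j\in U(n)$, so $g_{ij}$ is unitary exactly when the reference transition function $\bar g_{ij}$ is. The repair is one sentence: choose the $\bar h_i$ compatibly with the Hermitian metric on $\V$, so that the $\bar g_{ij}$ are $U(n)$-valued (possible by Remark \ref{rem:metric atlas}); the paper makes the analogous choice implicitly by working with unitaries throughout.

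Second, your justification of the lifting step --- arranging that \emph{every} principal $\T$-bundle over $U_i$ is trivial by shrinking --- is stronger than needed and is not obviously available over a general locally compact Hausdorff base, where small open sets need not have vanishing $H^2(\,\cdot\,;\Z)$. What is actually required is only that the single pullback $\alpha_i^{*}U(n)$ be trivial, and this is always achievable: refine the cover so that each $\alpha_i(U_i)$ lands in a trivializing open set of the principal $\T$-bundle $U(n)\to\aut(M_n)$ and compose $\alpha_i$ with a local section. This is exactly the content of the Raeburn--Williams lemma the paper invokes for the continuity of the $r_i$. With these two repairs your proof is correct and essentially identical to the paper's.
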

\begin{proof}
It is clear from the definitions that if $g_{ij}$ takes values in $\calU(N_{D_n})$, then $\Ad(g_{ij})$ takes values in $\aut(M_n,D_n)$.

Assume we are given an atlas of $\mathcal{B}$ over some open sets $U_i$ whose transition functions $\tilde{h}_{ij}$ take values in $\aut(M_n,D_n)$. Take any atlas of $\V$ with transition functions $g_{ij}$, and denote the induced transition functions of $\mathcal{B}$ by $\Tilde{g}_{ij}$. We can assume that this atlas is over the same open sets $U_i$, otherwise we take intersections. By \cite[Theorem 4.1, Chapter 6]{Husemoller:1993}, there exist functions $\tilde{r}_i:U_i\to\aut(M_n)$ such that $\Tilde{h}_{ij}(x)= \tilde{r}_i(x)^{-1}\Tilde{g}_{ij}(x)\tilde{r}_j(x)$ for all $x\in U_i\cap U_j$. Let $r_{i}(x)$ be unitaries implementing $\tilde{r}_i(x)$, and let $h_{ij}(x)$ be unitaries in $\calU(N_{D_n})$ implementing $\Tilde{h}_{ij}(x)$. Combining the short exact sequence (4.26) from \cite{raeburnwilliams:1998} with \cite[Lemma 4.33]{raeburnwilliams:1998} shows that by passing to a refinement of the cover $\{U_i\}_{i\in I}$, we can assume the functions $x\mapsto r_i(x)$ to be continuous. 

Applying $\Tilde{h}_{ij}(x)$ to some $a\in M_n$ yields \[h_{ij}(x)ah_{ij}(x)^*=r_{i}^*g_{ij}r_{j}(x)ar_{j}^*g_{ij}^*r_{i}(x).\]
    This holds for all $a$ in $M_n$, and hence $h_{ij}^*r_{i}^*g_{ij}r_{j}(x)$ lies in the center of $M_n$. Defining $\lambda_{ij}(x)\coloneqq h_{ij}^*r_{i}^*g_{ij}r_{j}(x)$ we obtain a  function $\lambda_{ij}$ from $U_i\cap U_j$ into $\T$. Using Theorem 2.7 in Chapter 5 of \cite{Husemoller:1993}, we see that the $\lambda_{ij}h_{ij}=r_i^*g_{ij}r_j$ are the transition functions of some atlas of $\V$. We have $\Ad(\lambda_{ij}h_{ij})=\Ad(h_{ij})=\tilde{h}_{ij}$, which means that we have found an atlas of $\V$ whose transition functions take values in $\calU(N_{D_n})$, and which induces the atlas of $\mathcal{B}$ that we started with. 
\end{proof}
\begin{proposition}\label{prop:cartan and atlas}
The vector bundle $\V$ has a normalizing atlas diagonalizing the left action $\varphi$ of $\E=\Gamma_0(\V)$ if and only if $\K(\E)$ has a Cartan subalgebra containing the image of $\varphi$.
\end{proposition}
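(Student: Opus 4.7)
The plan is to combine Li and Renault's characterization of Cartan subalgebras in $n$-homogeneous $C^*$-algebras \cite[Proposition 2.2]{LiRenault:2019} with Proposition \ref{prop:transition functions vector and mn bundle}, so that the only genuinely new content is translating the ``diagonalizes the left action'' condition into the ``contains $\varphi(C_0(Y))$'' condition. By Remark \ref{rem:constant rank} I may assume $\V$ has constant rank $n$, and write $\K(\E)=\Gamma_0(\mathcal{B})$ where $\mathcal{B}$ is the $M_n$-fiber bundle from Remark \ref{rem:transition functions vector and mn bundle}. Recall from \cite[Proposition 2.2]{LiRenault:2019} that Cartan subalgebras $D$ of $\K(\E)$ correspond bijectively to continuous subbundles $\mathcal{D}\subset\mathcal{B}$ whose fibers are masas of $M_n$ (all conjugate to $D_n$), via $D=\Gamma_0(\mathcal{D})$; such a subbundle is locally trivializable as $U_i\times D_n\subset U_i\times M_n$ by charts of $\mathcal{B}$ whose transition functions lie in $\aut(M_n,D_n)$.

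For the forward direction, suppose $(U_i,h_i)_{i\in I}$ is a normalizing atlas of $\V$ that diagonalizes $\varphi$. By Proposition \ref{prop:transition functions vector and mn bundle}, the induced charts $(U_i,H_i)$ of $\mathcal{B}$ (with $H_i(x,a)v=h_i(x,a h_i^{-1}(x,v))$) have transition functions in $\aut(M_n,D_n)$, so the trivial subbundles $U_i\times D_n$ glue along $U_i\cap U_j$ to a subbundle $\mathcal{D}\subset\mathcal{B}$ with fiber $D_n$. Setting $D=\Gamma_0(\mathcal{D})$ then gives a Cartan subalgebra of $\K(\E)$. The diagonalization hypothesis says that each $h_i(x,e_k)$ is an eigenvector of $\varphi(f)_x$ for every $f\in C_0(Y)$ and every $k$; equivalently, $H_i^{-1}(x,\varphi(f)_x)\in D_n$ for all $x\in U_i$. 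Thus $\varphi(f)$ is a section of $\mathcal{D}$, giving $\varphi(C_0(Y))\subset D$.

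For the reverse direction, suppose $D\subset\K(\E)$ is a Cartan subalgebra containing $\varphi(C_0(Y))$, and let $\mathcal{D}\subset\mathcal{B}$ be the corresponding subbundle of masas. Local trivializations of $\mathcal{D}$ inside $\mathcal{B}$ yield an atlas $(U_i,H_i)$ of $\mathcal{B}$ with transition functions in $\aut(M_n,D_n)$, and Proposition \ref{prop:transition functions vector and mn bundle} produces a normalizing atlas $(U_i,h_i)$ of $\V$ inducing it (after passing to a refinement, which is harmless). The assumption $\varphi(C_0(Y))\subset D=\Gamma_0(\mathcal{D})$ now reads $H_i^{-1}(x,\varphi(f)_x)\in D_n$ for all $x\in U_i$ and $f\in C_0(Y)$, which is exactly the statement that each $h_i(x,e_k)$ is an eigenvector of $\varphi(f)_x$. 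Hence $(U_i,h_i)$ diagonalizes the left action.

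The main obstacle, though technical rather than conceptual, is keeping the two parallel bundle pictures (the $\calU(N_{D_n})$-atlas of $\V$ and the $\aut(M_n,D_n)$-atlas of $\mathcal{B}$) coherent with one another and with $\varphi$; the identity $H_i(x,a)=h_i(x,a\,h_i^{-1}(x,\cdot))$ makes the equivalence between ``$\varphi(f)_x$ is diagonal in the local frame'' and ``$\varphi(f)_x\in\mathcal{D}_x$'' essentially tautological once one unwinds the definitions, which is why the proof reduces to citing Proposition \ref{prop:transition functions vector and mn bundle} and \cite[Proposition 2.2]{LiRenault:2019}.
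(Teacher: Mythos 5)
Your proof is correct and follows essentially the same route as the paper: reduce to constant rank, pass between the normalizing atlas of $\V$ and the $\aut(M_n,D_n)$-atlas of $\mathcal{B}$ via Proposition \ref{prop:transition functions vector and mn bundle}, invoke \cite[Proposition 2.2]{LiRenault:2019}, and observe that diagonalizing the left action is the same as $H_i^{-1}(\varphi(f)_x)\in D_n$, i.e.\ $\im\varphi\subset\Gamma_0(\mathcal{D})$. The paper merely states that the reverse direction is the forward argument run backwards, which you have spelled out correctly.
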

\begin{proof}
As outlined in Remark \ref{rem:constant rank}, we can assume without loss of generality that the rank of the fiber $\V_x$ is equal to a constant $n\in\N$, for all $x\in X$. Let $(U_i,h_i)_{i\in I}$ be a normalizing atlas diagonalizing the left action of $\E$. Remark \ref{rem:transition functions vector and mn bundle} and Proposition \ref{prop:transition functions vector and mn bundle} give an atlas $(U_i,H_i)_{i\in I}$ for $\mathcal{B}$, the principal $M_n$-bundle such that $\K(\Gamma_0(\V))\cong\Gamma_0(\mathcal{B})$, whose transition functions take values in $\aut(M_n,D_n)$. Hence \cite[Proposition 2.2]{LiRenault:2019} shows that $\K(\Gamma_0(\V))$ has a Cartan subalgebra $D$. By construction, $D$ is the space of continuous sections of a subbundle $\mathcal{D}$ of $\mathcal{B}$. We have
\begin{equation}\label{eq:cartan in compacts}\mathcal{D}_{U_i}=H_i(U_i\times D_n)\qquad\text{ for all } i\in I.\end{equation} 

The atlas $(U_i,h_i)$ diagonalizes the left action, and so $H_i^{-1}(\varphi(f)_x)$ lies in $D_n$ for all $x\in U_i$ and all $f\in C_0(Y)$. Together with Equation (\ref{eq:cartan in compacts}), this shows that the image of $\varphi$ is contained in $D$.

The argument can be run backwards in order to show that if we start with a Cartan in $\K(\E)$ containing the image of $\varphi$, then we obtain a normalizing atlas of $\V$ diagonalizing the left action. 
\end{proof}
\begin{definition}\label{def:conjugacy of cartans}
Let $\E$ be a $C^*$-correspondence over $C_0(X)$. We call two Cartan subalgebras $D$ and $D'$ in $\K(\E)$ \emph{conjugate} if there exists an automorphism of $\E$ such that the induced automorphism of $\K(\E)$ maps $D$ to $D'$. Here an automorphism of $\E$ is a bijective inner-product preserving $C_0(X)$-linear map on $\E$ that commutes with the left action. This is more restrictive than the usual notion of automorphism of $C^*$-correspondences, which would allow for a $\ast$-automorphism on $C_0(X)$.
\end{definition}
\begin{theorem}\label{thm:cartan and atlas}
The construction from Proposition \ref{prop:cartan and atlas} sets up a bijection between conjugacy classes of Cartan subalgebras of $\K(\E)$ containing the image of $\varphi$, and equivalence classes of normalizing atlases of $\V$ diagonalizing the left action of $\E$.
\end{theorem}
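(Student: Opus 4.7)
My plan is to leverage Proposition~\ref{prop:cartan and atlas}, which already provides mutually inverse local constructions between normalizing atlases of $\V$ diagonalizing $\varphi$ and Cartan subalgebras of $\K(\E)$ containing $\im \varphi$. What remains is to show that this correspondence descends to the quotients by the two equivalence relations. By Remark~\ref{rem:constant rank} I may assume $\V$ has constant rank $n$.

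Equivalent atlases produce conjugate Cartans. Let $(U_i, h_i)$ and $(U_i, h_i')$ be two equivalent normalizing diagonalizing atlases, with linking functions $r_i \colon U_i \to \calU(N_{D_n})$ satisfying $g'_{ij} = r_i^{-1} g_{ij} r_j$. By \cite[Theorem~2.7, Chapter~5]{Husemoller:1993}, the $r_i$ assemble into an isomorphism $\Phi \colon \V \to \V$ yielding a unitary $U_\Phi \in \mathcal{L}(\E)$; the intertwining clause in Definition~\ref{def:equivalence diagonalizing atlases} forces $U_\Phi$ to commute with $\varphi$, so $U_\Phi$ is an automorphism of $\E$ in the sense of Definition~\ref{def:conjugacy of cartans}. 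A local computation gives
\[
\Ad(U_\Phi)\bigl(H_i(x,d)\bigr) \;=\; H_i'\bigl(x,\, r_i(x)^{-1} d\, r_i(x)\bigr) \qquad (d \in M_n),
\]
and since $r_i(x) \in \calU(N_{D_n})$ stabilizes $D_n$, the Cartan $D$ associated to $(U_i, h_i)$ by $\mathcal{D}_{U_i} = H_i(U_i \times D_n)$ is carried onto the Cartan $D'$ associated to $(U_i, h_i')$.

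Conversely, suppose $D, D' \subset \K(\E)$ are Cartans containing $\im\varphi$ and that an automorphism $\Phi$ of $\E$ satisfies $\Ad(\Phi)(D) = D'$. Pick an atlas $(U_i, h_i)$ associated to $D$ via Proposition~\ref{prop:cartan and atlas}, and define $h_i' \coloneqq \Phi \circ h_i$, where $\Phi$ is viewed as a bundle automorphism of $\V$. Direct checks show that $(U_i, h_i')$ is again normalizing, diagonalizes $\varphi$ (since $\Phi$ commutes with $\varphi$), and is associated to $D'$ (since $\mathcal{D}'_{U_i} = H_i'(U_i \times D_n) = \Ad(\Phi)(\mathcal{D}_{U_i})$). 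Its transition functions satisfy $g'_{ij} = h_i^{-1}\Phi^{-1}\Phi h_j = g_{ij}$, so the constant linking functions $r_i \equiv \mathrm{id} \in \calU(N_{D_n})$ witness the equivalence relation, and the induced isomorphism is $\Phi$ itself, which intertwines $\varphi$ by hypothesis. Since any atlas $(U_i, \tilde h_i')$ obtained directly from $D'$ via Proposition~\ref{prop:cartan and atlas} differs from $(U_i, h_i')$ by $\calU(N_{D_n})$-valued functions with the induced isomorphism equal to the identity, transitivity of equivalence yields $(U_i, h_i) \sim (U_i, \tilde h_i')$.

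Finally, the two maps are mutually inverse on classes: starting from a Cartan $D$, any associated atlas recovers $D$ via $\mathcal{D}_{U_i} = H_i(U_i \times D_n)$, while starting from an atlas, the freedom in choosing a compatible atlas for the resulting Cartan is exactly the $\calU(N_{D_n})$-ambiguity absorbed by the equivalence relation. The main point requiring care is verifying that equivalence of atlases is indeed transitive and compatible with the intertwining condition; this reduces to the fact that $\calU(N_{D_n})$ is a group and that compositions of $\varphi$-commuting unitaries still commute with $\varphi$.
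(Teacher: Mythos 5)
Your proof is correct and follows the same overall strategy as the paper: reduce everything to showing that two normalizing diagonalizing atlases are equivalent if and only if the associated Cartan subalgebras are conjugate, plus the round-trip identities from Proposition \ref{prop:cartan and atlas}. The execution differs in two places. In the forward direction the paper passes to the induced automorphism of the principal $\aut(M_n,D_n)$-bundle $\mathcal{B}$ and cites \cite[Proposition 2.3]{LiRenault:2019}, whereas you verify $\Ad(U_\Phi)(H_i(x,d))=H_i'(x,r_i(x)^{-1}dr_i(x))$ by a direct local computation; your version is more self-contained and the computation checks out. In the backward direction the paper extracts linking functions $r_i=h_i^{-1}\circ T^{-1}\circ h_i'$ directly between the two given atlases and again invokes \cite[Proposition 2.3]{LiRenault:2019} to see they are $\calU(N_{D_n})$-valued; you instead transport the first atlas by $\Phi$ to get $h_i'=\Phi\circ h_i$, observe the linking functions are trivial, and then reduce to the claim that two atlases with the \emph{same} associated Cartan are equivalent. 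That last well-definedness step is where you are terse: the change-of-chart maps $r_i(x)=h_i'^{-1}(x,\cdot)\circ\tilde h_i'(x,\cdot)$ normalize $D_n$ because both atlases induce the same subbundle $\mathcal{D}'$, but their \emph{unitarity} requires that both atlases consist of isometric charts for the fixed metric on $\V$ coming from the inner product on $\E$ (otherwise $r_i(x)$ is only an invertible matrix normalizing $D_n$). This is an implicit standing convention in the paper as well (cf.\ Remark \ref{rem:metric atlas} and the paper's own unproved assertion that the round trip returns the atlas one started with), so it is not a gap specific to your argument, but it deserves one explicit sentence.
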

\begin{proof}
We again assume that $\V$ has constant rank $n$. 

It follows from the constructions that if we start with a normalizing atlas of $\V$ that diagonalizes the left action and employ Proposition \ref{prop:cartan and atlas} to obtain a Cartan subalgebra, then using the other direction of Proposition \ref{prop:cartan and atlas} gives back the atlas we started with. Hence all we need to show is that two normalizing atlases which diagonalize the left action are equivalent in the sense of Definition \ref{def:equivalence diagonalizing atlases} if and only if the associated Cartan subalgebras of $\K(\E)$ are conjugate in the sense of Definition \ref{def:conjugacy of cartans}. 

Assume that two atlases $(U_i,h_i)$ and $(U_i,h_i')$ with associated Cartan subalgebras $D$ and $D'$ are equivalent. By definition this means that there exist functions $r_i:U_i\to \calU(N_{D_n})$ implementing an equivalence between the systems of transition functions of $(U_i,h_i)$ and $(U_i,h_i')$. The induced automorphism $T$ of $\E$ as a Hilbert $C_0(X)$-module has to intertwine the left action, that is, be an automorphism in the category of $C^*$-correspondences. 

We adopt the notation of Remark \ref{rem:transition functions vector and mn bundle}. The functions $\Ad(r_i)$, which map from $U_i$ to $\aut(M_n,D_n)$, implement an equivalence between the systems of transition functions of $(U_i,H_i)$ and $(U_i,H_i')$. This gives an automorphism of the principal $\aut(M_n,D_n)$-bundle $\mathcal{B}$. The induced automorphism of $\K(\E)\cong\Gamma_0(\mathcal{B})$ is the same as the one obtained from $T$. By \cite[Proposition 2.3]{LiRenault:2019} it sends $D$ to $D'$. This shows that $D$ and $D'$ are conjugate in the sense of Definition \ref{def:conjugacy of cartans}.

Now take two atlases $(U_i,h_i)$ and $(U_i,h_i')$ of $\V$ whose associated Cartan subalgebras $D$ and $D'$ are conjugate. This means that there is an automorphism $T$ of $\E$ such that the induced automorphism of $\K(\E)$ sends $D$ to $D'$. In particular this gives an automorphism of $\V$. We obtain  functions $r_i$ from $U_i$ to $\calU(M_n)$ which establish an equivalence between the systems of transition functions of $(U_i,h_i)$ and $(U_i,h_i')$. As in the first part of the proof, the functions $\Ad(r_i)$ define a principal $\aut(M_n)$-bundle automorphism $\Phi$ of $\mathcal{B}$. Again the induced automorphism of $\K(\E)$ is the same as the one obtained from $T$. It follows from \cite[Proposition 2.3]{LiRenault:2019} that $\Phi$ is in fact an isomorphism of principal $\aut(M_n,D_n)$ bundles. Hence the functions $\Ad(r_i)$ are $\aut(M_n,D_n)$ valued, which implies that the $r_i$ are $\calU(N_{D_n})$-valued. We obtain that $(U_i,h_i)$ and $(U_i,h_i')$ are equivalent in the sense of Definition \ref{def:equivalence diagonalizing atlases}. This finishes the proof.
\end{proof}
\subsection{Cartan subalgebras and correspondences}\label{sect:cartans and corrs}
Let $(Z,r,s,\calL)$ be a proper twisted topological $Y$-$X$-correspondence associated to a $C^*$-correspondence $\E$. Recall from Definition \ref{def:twisted local homeo module} that elements of $\E$ can be regarded as sections of $\calL$. Take a function $f\in C_0(Z)$. Then $f$ acts on $\E$ by pointwise multiplication, or equivalently by right multiplication in the $C_0(Z)$-module $\Gamma_0(\calL)$. Explicitly, we can define a map $\Theta_f:\E\to \E$ by $\Theta_f\xi(z)=f(z)\xi(z)$ for all $\xi\in\E$ and $z\in Z$.
\begin{proposition}\label{prop:twtg to cartan}
For every $f\in C_0(Z)$, the operator $\Theta_f$ is linear, adjointable, and compact. The set $\{\Theta_f:f\in C_0(Z)\}$ is a Cartan subalgebra of $\K(\E)$ containing the image of $\varphi$. 
\end{proposition}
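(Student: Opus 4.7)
The plan is to reduce the proposition to Proposition \ref{prop:cartan and atlas} via the explicit atlas provided by Proposition \ref{prop:correspondence to atlas}. Fix a normalizing atlas $(U_i,h_i)_{i\in I}$ of $\V$ that diagonalizes the left action $\varphi$, together with the associated trivializations $(U_i,h_i^Z)_{i\in I}$ of the covering $s\colon Z\to X$ and $(s^{-1}(U_i),k_i)_{i\in I}$ of the line bundle $\calL$, as described in Remark \ref{rem:compatible atlases}. Throughout I identify $\E$ with $\Gamma_0(Z,r,s,\calL)$.

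First, $C_0(X)$-linearity of $\Theta_f$, the norm bound $\|\Theta_f\|\le\|f\|_\infty$, and adjointability with $\Theta_f^*=\Theta_{\bar f}$ all follow from short direct calculations using the formulas in Definition \ref{def:twisted local homeo module} and the fact that the metric on $\calL$ is conjugate-linear in the first variable. For compactness I would use the frame $\{\xi_{ik}\}$ of Remark \ref{rem:frame}: the reconstruction formula $\eta=\sum_{i,k}\xi_{ik}\langle\xi_{ik},\eta\rangle$ yields $\Theta_f=\sum_{i,k}\theta_{\Theta_f\xi_{ik},\xi_{ik}}$ in the strong operator topology. For $f$ of compact support only finitely many summands are nonzero, after refining to a locally finite cover, producing a finite-rank operator. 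The general case follows by sup-norm approximation of $f\in C_0(Z)$ by compactly supported functions combined with the norm bound.

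The map $f\mapsto\Theta_f$ is visibly a $\ast$-homomorphism, so its image $B\coloneqq\{\Theta_f:f\in C_0(Z)\}$ is a commutative $C^*$-subalgebra of $\K(\E)$. To identify $B$ with the Cartan subalgebra $D$ produced from our atlas by Proposition \ref{prop:cartan and atlas}, observe that in the chart $H_i$ from Remark \ref{rem:transition functions vector and mn bundle}, the operator $\Theta_f|_{U_i}$ is simply the diagonal matrix $\diag(f\circ h_i^Z(\cdot,1),\dots,f\circ h_i^Z(\cdot,n))$. Thus $\Theta_f$ is a section of the subbundle $\mathcal{D}\subset\mathcal{B}$ whose sections form $D$, and $B\subset D$. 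Conversely, any $d\in D$ restricts on each $U_i$ to such a diagonal matrix whose entries, read off as functions on $s^{-1}(U_i)\cong U_i\times\{1,\dots,n\}$ via $h_i^Z$, patch into a function $f\in C_0(Z)$ with $\Theta_f=d$. Finally, $\varphi(C_0(Y))\subset B$ follows from the identity $\varphi(g)=\Theta_{g\circ r}$, where $g\circ r\in C_0(Z)$ because $r$ is proper.

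The main obstacle I anticipate is the gluing step in the previous paragraph. Its correctness rests on the precise $\calU(N_{D_n})$-form of the transition functions in Remark \ref{rem:compatible atlases}: the permutation part $\sigma_{ij}$ reorders the diagonal entries in exactly the way required by the overlap identity $h_i^Z(x,k)=h_j^Z(x,\sigma_{ij}(x)^{-1}(k))$, while the $\T^n$ factor commutes with diagonal matrices and hence acts trivially on the diagonal entries of $d$. Together these ensure that the local expressions $f^i\in C_0(s^{-1}(U_i))$ agree on overlaps, and a partition-of-unity argument on $X$ then assembles them into the required $f\in C_0(Z)$.
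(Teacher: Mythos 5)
Your proof is correct and takes essentially the same route as the paper: both reduce the Cartan claim to the Cartan subalgebra produced by Proposition \ref{prop:cartan and atlas} from a normalizing atlas diagonalizing the left action, and both obtain $\im\varphi\subset\{\Theta_f\}$ from the identity $\varphi(g)=\Theta_{g\circ r}$ together with properness of $r$. The only differences are in the details: you identify $\{\Theta_f\}$ with that Cartan by a direct chart computation (sections of the diagonal subbundle $\mathcal{D}$, patched via the $\calU(N_{D_n})$-valued transition functions), whereas the paper characterizes it as the unique maximal abelian subalgebra having the frame vectors $\tilde{\xi}_{ik}(x)$ as eigenvectors and transports it along the module isomorphism of Lemma \ref{lem:canonical sections}; and your frame-based compactness argument supplies a step the paper dispatches in one line.
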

\begin{proof}
    It is easy to see that $\Theta_f$ is $C_0(X)$-linear. Furthermore we have
\begin{align*}
    \langle\Theta_f\xi,\eta\rangle(x)&=\sum_{z\in s^{-1}(x)}\langle\Theta_f\xi,\eta\rangle_\calL(z)=\sum_{z\in s^{-1}(x)}\langle\xi,\eta\rangle_\calL(z)\bar{f}(z)\\&=\sum_{z\in s^{-1}(x)}\langle\xi,\Theta_{\bar{f}}\eta\rangle_\calL(z)=\langle\xi,\Theta_{\bar{f}}\eta\rangle(x)
\end{align*}
for all $\xi,\eta\in\E$. This implies $\Theta_f^*=\Theta_{\bar{f}}$, in particular $\Theta_f$ is adjointable. It is compact because $f$ vanishes at infinity. It is clear that $\Theta_f\Theta_g=\Theta_{fg}$, and hence $D\coloneqq \{\Theta_f:f\in C_0(Z)\}$ is commutative. 

To show that $D$ is a Cartan subalgebra, we employ the theory of the previous sections. Let $\mathcal{P}$ be the principal $\calU(N_{D_n})$-bundle associated to $(Z,r,s,\calL)$. Take an atlas of $\mathcal{P}$. We obtain sections $\xi_{ik}$ in $\Gamma_0(\mathcal{P}\times\C^n)$ and sections $\tilde{\xi}_{ik}$ in $\Gamma_0(Z,r,s,\calL)$ as in Section \ref{sect:atlas from corr}. Then there is an isomorphism $\Phi$ from $\Gamma_0(\mathcal{P}\times\C^n)$ to $\Gamma_0(Z,r,s,\calL)$ which, according to Lemma \ref{lem:canonical sections}, sends $\xi_{ik}$ to $\tilde{\xi}_{ik}$. 

We know from Theorems \ref{thm:corr and atlas one to one} and \ref{thm:cartan and atlas} that $\K(\Gamma_0(\mathcal{P}\times\C^n))$ has a Cartan subalgebra $D'$. By its construction, $D'$ is the unique maximal abelian subalgebra of $\K(\Gamma_0(\mathcal{P}\times\C^n))$ such that $\xi_{ik}(x)$ is an eigenvector of $d(x)$ for all $d\in D'$ and $x\in X$. The same holds for $D$ and $\tilde{\xi}_{ik}$. Thus we see that the isomorphism between the algebras of compact operators induced by $\Phi$ sends $D'$ to $D$. In particular, $D$ is a Cartan subalgebra of $\K(\E)$.

By definition of the left action we have $\varphi(f)=\Theta_{f\circ r}$ for all $f\in C_0(Y)$. The range map $r$ is proper, so $f\circ r$ lies in $C_0(Z)$. Thus the image of $\varphi_\E$ is contained in $D$. This finishes the proof. 
\end{proof}
The other direction, namely showing that a Cartan subalgebra in $\K(\E)$ containing the image of $\varphi$ gives rise to a twisted topological correspondence associated to $\E$, requires more effort. 

Let $Z$ and $X$ be topological spaces. A map $s$ from $Z$ to $X$ is a \emph{branched covering map} if it is a covering map everywhere except on a nowhere dense set called the \emph{branch set}. More concretely, this requires that there exists a nowhere dense set $B\subset X$ such that any point in $s(Z)\backslash B$ has a neighborhood evenly covered by $s$. 
\begin{lemma}\label{lem:abelian C_0(X) algebra branched covering}
    Let $\E$ be a full $\sigma$-finitely generated and $\sigma$-projective Hilbert $C_0(X)$-module of rank $n$. Let $D$ be an abelian subalgebra of $\K(\E)$ containing $C_0(X)$. Then the Gelfand dual $\iota^*:\widehat{D}\to X$ of the inclusion $\iota:C_0(X)\hookrightarrow D$ is a branched covering. For every point $x\in X$ the fiber $(\iota^*)^{-1}(x)$ is finite. The branching set of $\iota^*$ is closed. 
\end{lemma}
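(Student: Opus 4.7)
The plan is to analyze the fibers of $\iota^*$ fiberwise via the $C_0(X)$-algebra structure of $D$ and then argue that the fiber dimension stabilizes on an open dense set.

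First I would identify the fibers of $\iota^*$. Since $D$ contains $C_0(X)$, it is a $C_0(X)$-algebra, and the characters of $D$ restricting to evaluation at $x \in X$ correspond bijectively to characters of the fiber $D_x \coloneqq D/\overline{m_x D}$, where $m_x \subset C_0(X)$ is the maximal ideal at $x$. The inclusion $D \hookrightarrow \K(\E)$ descends to an injective $\ast$-homomorphism $D_x \hookrightarrow \K(\E)_x \cong M_n$ (using fullness and constant rank $n$, together with $\K(\E)$ being an $n$-homogeneous $C_0(X)$-algebra in the sense recalled in Section \ref{sect:perspective of the compacts}). Hence $D_x$ is a unital abelian subalgebra of $M_n$, which is finite-dimensional of dimension at most $n$, and $(\iota^*)^{-1}(x) \cong \widehat{D_x}$ is finite of cardinality $d(x) \coloneqq \dim D_x \leq n$. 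This proves finiteness of fibers, and properness of $\iota^*$ follows from nondegeneracy of $\iota$ (see Remark \ref{rem:gelfand}).

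Next I would show that $d\colon X \to \{1,\dots,n\}$ is lower semi-continuous. If $a_1,\dots,a_k \in D$ are such that their images in $D_{x_0}$ are linearly independent, then by a standard continuous-field argument the same is true in $D_x$ for all $x$ in a neighborhood of $x_0$; hence $d(x) \geq d(x_0)$ near $x_0$. From lower semi-continuity and the bound $d \leq n$, every nonempty open set $V \subseteq X$ contains an open subset on which $d$ is constant: pick $x_1 \in V$ with $d(x_1) = \max_{V} d$, and use lower semi-continuity to get an open neighborhood $W$ of $x_1$ in $V$ on which $d$ is at least $d(x_1)$, hence equal. Therefore the set
\[
O \coloneqq \bigcup \{W \subseteq X \text{ open}: d|_W \text{ is constant}\}
\]
is open and dense, and $B \coloneqq X \setminus O$ is a closed nowhere-dense subset of $X$.

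To finish I would verify that $\iota^*$ restricts to a covering over $O$. Fix $x_0 \in O$ with $d \equiv k$ on a neighborhood $V \subseteq O$. Let $e_1,\dots,e_k$ be the minimal projections of $D_{x_0}$. Lift each $e_i$ to a self-adjoint element $\tilde e_i \in D$; since $\tilde e_i(x_0) = e_i$ is a projection, standard functional calculus (applied to a continuous cut-off separating $0$ and $1$ from the rest of the spectrum) produces, on a possibly smaller neighborhood of $x_0$, continuous projections $p_i \in D|_V$ with $p_i(x_0) = e_i$. Shrinking $V$ further so that the $p_i(x)$ remain pairwise orthogonal and sum to $1_{D_x}$ for all $x \in V$ (which holds because $d(x) = k$ forces the $k$ pairwise orthogonal projections $p_i(x)$ in $D_x \cong \mathbb C^k$ to be the minimal ones and to sum to the unit), one obtains a fiberwise basis. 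A partition-of-unity / approximation argument then identifies $D|_V$ with $\bigoplus_{i=1}^k C_0(V) p_i \cong C_0(V)^k$, so $(\iota^*)^{-1}(V) \cong V \sqcup \cdots \sqcup V$ ($k$ copies) over $V$, which is a trivial $k$-sheeted covering.

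The main obstacle I anticipate is the continuous local lifting of the minimal projections $e_i \in D_{x_0}$ to a complete system of orthogonal projections $p_1,\dots,p_k$ in $D|_V$ summing to the unit, together with verifying that $\bigoplus_i C_0(V) p_i$ exhausts $D|_V$; this requires combining the lower semi-continuity of $d$, constancy of $d$ on $V$, and the continuity of $D$ as a $C_0(X)$-algebra to rule out that any additional fiberwise direction in $D_x$ persists on $V$.
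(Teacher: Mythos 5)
Your proof is correct and follows the same overall strategy as the paper's: finite fibers because each $D_x$ is a finite-dimensional abelian subalgebra of $M_n$, lower semicontinuity of the fiber dimension, a closed nowhere-dense branch set, and local triviality of $\iota^*$ over the locus where the dimension is locally constant. The only substantive difference is that where you lift the minimal projections of $D_{x_0}$ by hand via functional calculus to trivialize $D$ locally, the paper invokes Fell's theorem to obtain the local embedding $C_0(U)\otimes D_{n(x)}\hookrightarrow D$ directly, which packages both the semicontinuity and the local trivialization in one step.
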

\begin{proof}
Note that since $C_0(X)$ contains an approximate unit for $\K(\E)$ and hence for $D$, the inclusion $\iota$ is nondegenerate. Thus we can indeed take the Gelfand dual $\iota^*$, and obtain a continuous proper map from $\widehat{D}$ to $X$. 

We fix some notation. Write $n$ for the mapping from $X$ to $\N$ that sends $x\in X$ to the dimension $n(x)$ of the fiber $D_x$ over a point $x\in X$. The set of natural numbers greater or equal to $k\in\N$ is denoted by $\N_k$. We write $B$ for the subset of points $x$  such that $n$ is not constant on any neighborhood of $x$. That is, $B$ consists of those points at which $n_x$ jumps between two values.  

Take $x\in X$. Since $D_x$ is an abelian $n(x)$-dimensional $C^*$-algebra, it must be isomorphic to $D_{n(x)}$. In particular the fiber $(\iota^*)^{-1}(x)$ has exactly $n(x)$ elements. By \cite[Theorem 3.1]{Fell:1961} there exists a neighborhood $U$ of $x$ and a mapping $\phi$ from $D_{n(x)}$ to $D$ such that $\phi(d)(x)=d$ for all $d\in D_{n(x)}$, and such that for each $y\in U$ the map $d\mapsto \phi(d)(y)$ is a $\ast$-isomorphism from $D_{n(x)}$ onto a finite-dimensional subalgebra of $D_y$. We obtain an injective $C_0(U)$-linear $\ast$-homomorphism from $C_0(U)\otimes D_{n(x)}$ to $D$ by sending $f\otimes d$ to $\iota(f)\phi(d)$. In particular $n^{-1}(\N_k)$ is open for every $k\in\N$. Hence the boundary $\partial n^{-1}(\N_k)$ is nowhere dense. 

If the fibers of $\E$ have rank at most $N$, then
\[B=\bigcup_{k=0}^N\partial n^{-1}(\N_k)\]
and therefore $B$ is nowhere dense. It is also closed, since it is a finite union of closed sets. 

Take $x$ in the complement of $B$. By the above there exists an open neighborhood $U$ of $x$ and an injective $C_0(U)$-linear $\ast$-homomorphism from $C_0(U)\otimes D_{n(x)}$ to $D$. We can assume that $U$ is contained in $X\backslash B$, and that $n$ is constant on $U$. Hence the map is in fact a $\ast$-isomorphism from $C_0(U)\otimes D_{n(x)}$ to $D_U$, because it induces an isomorphism in each fiber. Under this isomorphism, the embedding $\iota_U$ of $C_0(U)$ into $D_U$ corresponds to the embedding $f\mapsto f\otimes I_{n(x)}$ of $C_0(U)$ into $C_0(U)\otimes D_{n(x)}$. Here $I_{n(x)}$ is the identity matrix. The Gelfand dual $(\iota_U)^*=(\iota^*)_U$ of $\iota_U$ is the trivial $n(x)$-sheeted covering map from $\bigsqcup^{n(x)} U$ to $U$. Hence any $x$ from the complement of the branch set $B$ has a neighborhood that is evenly covered by $\iota^*$. That is, $\iota^*$ is a branched covering. 
\end{proof}
To prove the main result of this section we will only use Lemma \ref{lem:abelian C_0(X) algebra branched covering} in the case that the branching set $B$ is empty, and we are dealing with an actual covering. However, the more general case will be very useful in Section \ref{sect:reconstructing corr from left action}.
\begin{remark}\label{rem:projections on covering}
Let $\E$ be a full $\sigma$-finitely generated and $\sigma$-projective Hilbert $C_0(X)$-module of rank $n$. Let $D$ be an abelian subalgebra of $\K(\E)$ containing $C_0(X)$. By Lemma \ref{lem:abelian C_0(X) algebra branched covering} we obtain a branched covering $s:\widehat{D}\to X$. Let $B$ be its branching set. Take a point $x\in X\backslash B$ and let $K$ be an evenly covered compact neighborhood of $x$ contained in $X\backslash B$. We write $m$ for the constant cardinality of the fibers $s^{-1}(y)$ for $y\in K$. Then $s^{-1}(K)$ is homeomorphic to a disjoint union of $m$ copies of $K$. Hence we can find elements $p_1,\dots,p_m$ of $D$ such that $p_1(y),\dots,p_m(y)$ are pairwise orthogonal projections in $\K(\E)_y\cong M_n$ summing up to the identity, for every $y\in K$. If $z$ lies in $s^{-1}(x)$ then there exists exactly one $i\in\{1,\dots,m\}$ such that $p_i(x)$ is nonzero. Write $p_z\coloneqq p_i(x)\in M_n$. If $p_z$ and $\tilde{p}_z$ are two projections obtained in this way by choosing different evenly covered neighborhoods of $x$, then $p_z=\tilde{p}_z$. It therefore makes sense to speak of the projection $p_z$ associated to an element $z\in s^{-1}(x)$.   
\end{remark}

Assume that we are in the setting of Remark \ref{rem:projections on covering}. We again denote the Gelfand dual of the inclusion $C_0(X)\hookrightarrow D$, which is a branched covering by Lemma \ref{lem:abelian C_0(X) algebra branched covering}, by $s$. We write $\E^D$ for $\E$ regarded as a $D$-module. 

Let $K$ be a compact set contained in $X\backslash B$. We can assume that the fibers over points in $K$ have constant dimension $m\in\N$, otherwise we partition $K$ into a finite number of clopen sets over which the fibers have constant dimension (compare Remark \ref{rem:constant rank}). Then the restriction $s_K:s^{-1}(K)\to K$ is an $m$-sheeted covering map. For the following lemma note that $D_K$ is isomorphic to  $C(s^{-1}(K))$. 

\begin{lemma}\label{lem:structure of module from abelian subalgebra}
Let $K$ be a compact set contained in $X\backslash B$. Then $\E^D_K$ is a finitely generated projective $D_K$-module. The rank of $\E^D_K$ over a point $z\in s^{-1}(K)$ is equal to the rank of the projection $p_z$ from Remark \ref{rem:projections on covering}.
\end{lemma}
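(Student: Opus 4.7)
The plan is to realize $\E^D_K$ as the module of continuous sections of a complex vector bundle $\widetilde{\V}$ over $s^{-1}(K)$ with fiber $\widetilde{\V}_z = p_z\E_{s(z)}$, and then invoke Serre--Swan. First I would verify that $z \mapsto p_z$ is a continuous assignment of projections on $s^{-1}(K)$. For any $z_0 \in s^{-1}(K)$, an evenly covered neighborhood $V$ of $s(z_0)$ yields orthogonal projections $p_1, \dots, p_m \in D$ summing to $1$ on $V$ as in Remark \ref{rem:projections on covering}; writing $s^{-1}(V) = V_1 \sqcup \cdots \sqcup V_m$ accordingly, the map $z \mapsto p_z$ on $V_i$ is simply $z \mapsto p_i(s(z))$, which is continuous because $p_i \in D \subset \K(\E)$. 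Covering $K$ by finitely many such evenly covered opens extends continuity to all of $s^{-1}(K)$, and the bundle $\widetilde{\V}$ is assembled by identifying $\widetilde{\V}|_{V_i}$ with the image $p_i\E|_V$ of the pointwise projection $p_i$; this subbundle is locally trivial because $\E_V$ is a continuous field of constant rank and $p_i$ is a continuous projection of locally constant rank.

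Second, I would define $\Phi : \E^D_K \to \Gamma(\widetilde{\V})$ by $\Phi(\xi)(z) = p_z\xi(s(z))$ and show it is a $D_K$-module isomorphism. Locally on $V$, if $d \in D_V$ corresponds to $(\tilde{d}_1, \dots, \tilde{d}_m) \in \bigoplus_i C(V)$ under the isomorphism $D_V \cong C(s^{-1}(V))$, then by orthogonality of the $p_j$ one has
\[
\Phi(d\cdot\xi)(z) \;=\; p_i(s(z))\sum_j \tilde{d}_j(s(z))\,p_j(s(z))\xi(s(z)) \;=\; \tilde{d}_i(s(z))\,\Phi(\xi)(z)
\]
for $z \in V_i$, which is exactly $\tilde{d}(z)\,\Phi(\xi)(z)$ under the identification $V_i \cong V$ via $s$. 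Injectivity and surjectivity are local statements: on $V_i$, $\Phi$ amounts to the identification $\xi \mapsto p_i\xi$ of $\E_V = \bigoplus_i p_i\E_V$ with $p_i\E_V$, and bijectivity follows by patching through a partition of unity subordinate to a finite evenly covered cover of $K$.

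Finally, since $s$ is proper, $s^{-1}(K)$ is compact, so the Serre--Swan theorem gives that $\Gamma(\widetilde{\V}) \cong \E^D_K$ is a finitely generated projective module over $C(s^{-1}(K)) = D_K$, with rank at $z$ equal to $\dim_{\C}\widetilde{\V}_z = \mathrm{rank}(p_z)$. The main obstacle is to verify that $\widetilde{\V}$ is a genuine continuous vector bundle and that $\Phi$ is well defined globally, but both reduce to the local description above, in which $\widetilde{\V}$ is literally the image of a continuous projection acting on a continuous Hilbert module.
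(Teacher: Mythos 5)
Your proposal is correct and follows essentially the same route as the paper: both arguments use the orthogonal projections $p_{ki}$ over an evenly covered set to decompose $\E^D$ locally as $\bigoplus_k p_{ki}\E$ with $D$ acting componentwise, identify the resulting bundle over $s^{-1}(K)$ sheet by sheet with the (locally trivial, finite rank) image bundles of the projections, and conclude finite generation, projectivity, and the rank formula from this local picture. Your version merely packages the same decomposition as an explicit bundle $\widetilde{\V}$ with an explicit section isomorphism $\Phi$, which is a presentational rather than a mathematical difference.
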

\begin{proof}
We can assume without loss of generality that the fibers over points in $K$ have constant dimension $m\in\N$. 

Take a finite collection of compact sets $\{K_i\}_{i\in I}$ which covers $K$, and such that all $K_i$ are evenly covered by $s$. As shown in Remark \ref{rem:projections on covering}, this implies that for every $i\in I$ there exist elements $p_{1i},\dots,p_{mi}$ of $D$ such that $p_{1i}(x),\dots,p_{mi}(x)$ are pairwise orthogonal projections in $\K(\E)_x\cong M_n$ summing up to the identity, for every $x\in K_i$. 

Since $\E$ is a $\sigma$-finitely generated and $\sigma$-projective $C_0(X)$-module, $\E_{K_i}$ is a finitely generated projective $C(K_i)$-module for every $i\in I$. Thus $p_{ki}\E_{K_i}$ is also finitely generated and projective, for every $k\in\{1,\dots,m\}$. We have $\E_{K_i}^D\cong\bigoplus_{k=1}^mp_{ki}\E_{K_i}$ and $D_{K_i}\cong\bigoplus_{k=1}^m C(K_i)$. Under these isomorphisms, right multiplication of an element $(f_1,\dots,f_m)$ of $D_{K_i}$ with an element $(\xi_1,\dots,\xi_m)$ of $\E_{K_i}^D$ is given by
\[(\xi_1,\dots,\xi_m)(f_1,\dots,f_m)=(\xi_1f_1,\dots,\xi_mf_m).\]
 Hence the bundle over $\bigsqcup_{k=1}^m K_i$ associated to $\E_{K_i}^D$, when restricted to the $k$-th copy of $K_i$, is the vector bundle associated to $p_{ki}\E_{K_i}$. Thus the bundle associated to $\E_{K_i}^D$ has finite rank and is locally trivial, which implies that $\E_{K_i}^D$ is finitely generated and projective.

Hence for every $i\in I$ the bundle over $\widehat{D}$ associated to $\E_K^D$ has finite rank and is locally trivial when restricted to $K_i$. This implies that it has finite rank and is locally trivial on all of $K$, and thus $\E_K^D$ is finitely generated projective. That the rank of $\E^D_K$ over a point $z\in s^{-1}(K)$ is equal to the rank of the projection $p_z$ also follows from this decomposition.
\end{proof}
We now want to turn $\E^D_K$ into a Hilbert $D_K$-module. Let $\{U_i\}_{i\in I}$ be a finite open cover of $K$, such that the closure of each $U_i$ is compact and evenly covered. 
Write $\beta$ for the metric on the vector bundle associated to $\E$ that gives the inner product. Then we define a map from $\E_K\times\E_K$ to $D_K$ by setting
\begin{equation}\label{eq:inner product}\langle\xi,\eta\rangle_D(z)=\beta_{s(z)}(p_z\xi(x),\eta(x))\end{equation}
for $\xi,\eta\in\E$ and $z\in s^{-1}(K)$, where $p_z$ is the projection in $D_{s(z)}$ constructed in Remark \ref{rem:projections on covering}.
\begin{lemma}
The map $\langle\cdot,\cdot\rangle_D$ is a $D_K$-valued inner product turning $\E^D_K$ into a Hilbert $D_K$-module.
\end{lemma}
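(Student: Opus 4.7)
The strategy is to verify the Hilbert-module axioms locally on the evenly covered pieces of $K$, using the decomposition of $\E_K^D$ and $D_K$ from the proof of Lemma~\ref{lem:structure of module from abelian subalgebra} and the fact that each $p_z$ is a self-adjoint idempotent in $\K(\E)_{s(z)}\cong M_n$ with $\sum_{z\in s^{-1}(x)} p_z = 1$ (Remark~\ref{rem:projections on covering}). The right $D$-action on $\E^D$ is given by the left $\K(\E)$-action, transposed using commutativity of $D$.

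First I would check that $\langle\xi,\eta\rangle_D$ is indeed continuous on $s^{-1}(K)$, hence an element of $D_K$: on the $k$-th sheet over an evenly covered $K_i$, the formula becomes $x\mapsto\beta_x(p_{ki}(x)\xi(x),\eta(x))$, continuous by continuity of all of the data, and these local descriptions patch consistently because $p_z$ is intrinsic to $z$. Conjugate-linearity in the first slot and linearity in the second are direct, the Hermitian property follows from $p_z^*=p_z=p_z^2$ via $\beta(p_z\xi,\eta)=\beta(\xi,p_z\eta)=\overline{\beta(p_z\eta,\xi)}$, and positivity from $\langle\xi,\xi\rangle_D(z)=\beta(p_z\xi(x),p_z\xi(x))\geq 0$. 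Definiteness then follows because $\sum_{z\in s^{-1}(x)}p_z=1$ in $M_n$, so $p_z\xi(x)=0$ for every $z\in s^{-1}(x)$ forces $\xi(x)=0$.

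The main step, and the expected obstacle, is the $D_K$-linearity $\langle\xi,\eta\cdot d\rangle_D=\langle\xi,\eta\rangle_D\cdot d$. The crucial identity is $d(x)\,p_z=d(z)\,p_z$ for $z\in s^{-1}(x)$, which is the content of Gelfand duality for $D$: evaluation at $z\in\widehat{D}$ extracts the eigenvalue of $d(x)\in D_x$ on the range of the minimal projection $p_z$. Granting this, one computes $\beta(p_z\xi(x),d(x)\eta(x))=\beta(\xi(x),d(x)p_z\eta(x))=d(z)\,\beta(p_z\xi(x),\eta(x))$, which is the required identity. Completeness of $\E^D_K$ as a Hilbert $D_K$-module is then automatic from Lemma~\ref{lem:structure of module from abelian subalgebra}: it is finitely generated and projective over the unital $C^*$-algebra $D_K$, so any full positive-definite $D_K$-valued inner product turns it into a Hilbert module via the standard frame argument.
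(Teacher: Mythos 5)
Your proposal is correct and follows essentially the same route as the paper: the paper simply asserts that the inner product axioms are easy to verify (the details you supply, including the key identity $d(x)p_z=d(z)p_z$, are exactly the intended ones) and then, like you, deduces completeness automatically from the fact that $\E^D_K$ is finitely generated and projective over the unital algebra $D_K$ (citing Waldmann). The only minor remark is that fullness of the inner product is not needed for the automatic-completeness step.
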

\begin{proof}
It is easy to see that $\langle\cdot,\cdot\rangle_D$ fulfills the inner product axioms. Since $\E_K^D$ is finitely generated projective, completeness is automatic, see \cite{Waldmann:2021}.
\end{proof}

Now assume that $D$ is not just an abelian subalgebra of $\K(\E)$, but a Cartan subalgebra. In other words, we require that for all $x\in X$ the dimension of $D_x$ equals the dimension of $\E_x$. Then $D$ automatically contains $C_0(X)$, since it is maximal abelian. The branching set $B$ is empty, and $s:\widehat{D}\to X$ is an $n$-sheeted covering. We obtain from Lemma \ref{lem:structure of module from abelian subalgebra} that $\E^D$ is $\sigma$-finitely generated and $\sigma$-projective. We can use Equation (\ref{eq:inner product}) to define a $D$-valued inner product turning $\E^D$ into a Hilbert $D$-module. Furthermore, we have $\rank p_z=1$ for all $z\in\widehat{D}$, where $p_z$ is the projection from Remark \ref{rem:projections on covering}. Then Lemma \ref{lem:structure of module from abelian subalgebra} implies that the vector bundle over $\widehat{D}$ associated to $\E^D$ is a line bundle, which we call $\calL$. Thus $(Z,s,\calL)$ defines a twisted covering.

\begin{proposition}\label{prop:cartan in compacts iso of modules}
The Hilbert $C_0(X)$-modules $\E$ and $\Gamma_0(Z,s,\calL)$ are isomorphic. 
\end{proposition}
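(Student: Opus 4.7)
The plan is to realize the isomorphism as the identity on underlying sections, read through a Serre--Swan style identification $\E^D\cong\Gamma_0(\calL)$. The point is that $\Gamma_0(Z,s,\calL)$ is, as a $C_0(X)$-module, nothing other than $\E^D$ with its $D$-module structure restricted along the inclusion $\iota\colon C_0(X)\hookrightarrow D$. Three things need to be checked: that $\E^D$ is indeed $\Gamma_0(\calL)$ as a Hilbert $D$-module, that the right $C_0(X)$-action matches that of Definition \ref{def:twisted local homeo module}, and that the $C_0(X)$-valued inner product matches $\langle\cdot,\cdot\rangle_\E$.

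For the first step, I would apply Lemma \ref{lem:structure of module from abelian subalgebra}, upgraded from compact $K$ to all of $X$ by covering $X$ with a countable collection of compacta (using that $\E$ is $\sigma$-finitely generated and $\sigma$-projective). This yields that $\E^D$ is a $\sigma$-finitely generated and $\sigma$-projective Hilbert $D$-module whose associated bundle has fibers of rank $\rank p_z=1$, hence is a line bundle, which is precisely the $\calL$ defined just before the proposition. By \cite[Theorem 3.4]{partI} there is then a $D$-linear isomorphism $\E^D\cong\Gamma_0(\calL)$, and by uniqueness of the Hermitian metric up to isometry this isomorphism identifies $\langle\cdot,\cdot\rangle_D$ with the pointwise pairing given by the metric $\beta_\calL$ on $\calL$. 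Under Gelfand duality $\iota$ corresponds to $f\mapsto f\circ s$, so right multiplication on $\E^D$ by $\iota(f)$ is exactly multiplication by $f\circ s$ on sections of $\calL$, matching the $C_0(X)$-right action from Definition \ref{def:twisted local homeo module}.

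For the $C_0(X)$-inner product, the key identity is the fiberwise completeness relation
\begin{equation*}
\sum_{s(z)=x} p_z \,=\, 1_{\K(\E)_x},
\end{equation*}
which is immediate from Remark \ref{rem:projections on covering}: the $p_z$ are the fiber values at $x$ of a local partition-of-unity $p_1,\dots,p_n$ in $D$ over an evenly covered neighborhood of $x$, and these sum to the identity by construction. Combined with self-adjointness of each $p_z$ and linearity of $\beta_x$ in its first argument, one computes
\begin{equation*}
\sum_{s(z)=x}\langle\xi,\eta\rangle_\calL(z) \,=\, \sum_{s(z)=x}\beta_x\bigl(p_z\xi(x),\eta(x)\bigr) \,=\, \beta_x\Bigl(\sum_{s(z)=x} p_z\xi(x),\,\eta(x)\Bigr) \,=\, \beta_x(\xi(x),\eta(x)),
\end{equation*}
which is $\langle\xi,\eta\rangle_\E(x)$. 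Assembling the three verifications, the identity map is a $C_0(X)$-linear, inner-product preserving bijection between $\E$ and $\Gamma_0(Z,s,\calL)$, hence an isomorphism of Hilbert $C_0(X)$-modules. The only delicate step is matching the Serre--Swan metric on $\calL$ with $\langle\cdot,\cdot\rangle_D$ from \eqref{eq:inner product} in the first step, but this reduces to uniqueness of Hermitian metrics up to isometry \cite[Theorem 2.5]{partI} and is essentially bookkeeping.
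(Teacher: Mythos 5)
Your proposal is correct and follows essentially the same route as the paper: identify $\Gamma_0(Z,s,\calL)$ with $\E^D$ restricted along $\iota$, note the underlying $C_0(X)$-modules coincide, and verify the inner products agree via the fiberwise relation $\sum_{s(z)=x}p_z=1$, which is exactly the paper's computation. The extra care you take in pinning down the Serre--Swan identification $\E^D\cong\Gamma_0(\calL)$ is already absorbed in the paper into the definition of $\calL$ in the discussion preceding the proposition, so nothing is genuinely different.
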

\begin{proof}
Algebraically, $\Gamma_0(Z,s,\calL)$ is the same as $\E^D$ with right multiplication restricted from $D$ to $C_0(X)$. Since $\E^D$ was just a notation for $\E$ regarded as a right $D$-module, we see that as $C_0(X)$-modules $\Gamma_0(Z,s,\calL)$ is the same as $\E$. We will not distinguish between elements of both modules. To see that the inner products coincide, take $x\in X$ and calculate
\begin{align*}
    \langle\xi,\eta\rangle_{\Gamma_0(Z,s,\calL)}(x)=\sum_{z\in s^{-1}(x)}\langle\xi,\eta\rangle_{\calL}(z)=\sum_{z\in s^{-1}(x)}\beta_{x}(p_z\xi(x),\eta(x))=\beta_{x}(\xi(x),\eta(x)).
\end{align*}
The last equality follows from the fact that the $p_z$ sum up to the identity. By definition $\beta_x(\xi(x),\eta(x))=\langle\xi,\eta\rangle_\E(x)$, and hence we are done.  
\end{proof}

Now we come to the main theorem of this section. Let, as in the previous sections, $\E$ be a proper nondegenerate $\sigma$-finitely generated and $\sigma$-projective $C^*$-correspondence from $C_0(Y)$ to $C_0(X)$ with left action $\varphi$.
\begin{theorem}\label{thm:corr to cartan}
There is a bijection between isomorphism classes of proper twisted $Y$-$X$-correspondences $(Z,r,s,\calL)$ associated to $\E$, and conjugacy classes of Cartan subalgebras of $\K(\E)$ containing the image of $\varphi$.
\end{theorem}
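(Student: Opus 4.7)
The strategy is to construct both directions of the bijection explicitly and check they are mutually inverse, then verify compatibility with the respective equivalence relations. Proposition \ref{prop:twtg to cartan} already handles one direction: given a proper twisted $Y$-$X$-correspondence $(Z,r,s,\calL)$ associated to $\E$, the subalgebra $D_{(Z,r,s,\calL)} \coloneqq \{\Theta_f : f \in C_0(Z)\}$ is a Cartan subalgebra of $\K(\E)$ containing $\im\varphi$.

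For the reverse direction, start with a Cartan $D \subset \K(\E)$ containing $\im\varphi$. Since $D$ is maximal abelian, the ranks of $D_x$ and $\E_x$ agree for all $x\in X$, so Lemma \ref{lem:abelian C_0(X) algebra branched covering} applies with empty branching set and produces an $n$-sheeted covering $s:\widehat{D}\to X$. The Hermitian line bundle $\calL$ over $\widehat{D}$ is then obtained from the construction preceding Proposition \ref{prop:cartan in compacts iso of modules}, and Proposition \ref{prop:cartan in compacts iso of modules} yields $\E\cong\Gamma_0(\widehat{D},s,\calL)$ as right Hilbert $C_0(X)$-modules. The range map $r:\widehat{D}\to Y$ is obtained by applying non-unital Gelfand duality (Remark \ref{rem:gelfand}) to the nondegenerate $\ast$-homomorphism $\varphi:C_0(Y)\to D\cong C_0(\widehat{D})$. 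Properness of $r$ follows from nondegeneracy plus the fact that $\varphi$ lands in the compacts, giving a proper twisted topological correspondence $(\widehat{D},r,s,\calL)$.

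To show the two constructions are mutually inverse, first start from $(Z,r,s,\calL)$ and form $D = \{\Theta_f : f \in C_0(Z)\}$. The map $f\mapsto\Theta_f$ is a $\ast$-isomorphism $C_0(Z)\cong D$, whose Gelfand dual identifies $\widehat{D}$ with $Z$; under this identification the inclusion $C_0(X)\hookrightarrow D$ corresponds to pullback along $s$, so the recovered source map is $s$, and since by definition $\varphi(f)=\Theta_{f\circ r}$ for $f\in C_0(Y)$ the recovered range map is $r$. The line bundle structure coincides by construction of $\calL$ via the projections $p_z$ of Remark \ref{rem:projections on covering}. Conversely, starting from $D$ and forming $(\widehat{D},r,s,\calL)$, the Cartan $\{\Theta_f : f\in C_0(\widehat{D})\}\subset\K(\Gamma_0(\widehat{D},s,\calL))$ transports back to $D$ under the isomorphism of Proposition \ref{prop:cartan in compacts iso of modules}, since by construction elements of $D$ act on $\E^D$ by pointwise multiplication on sections of $\calL$.

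For the bijection on equivalence classes, one verifies that an isomorphism $(Z,r,s,\calL)\cong(Z',r',s',\calL')$ induces an isomorphism of $C^*$-correspondences $\Gamma_0(Z,r,s,\calL)\cong\Gamma_0(Z',r',s',\calL')$, and the resulting automorphism of $\E$ conjugates the Cartans $D_{(Z,r,s,\calL)}$ and $D_{(Z',r',s',\calL')}$, since the intertwining of pointwise multiplications is automatic from the homeomorphism $Z\cong Z'$. The converse direction is handled by observing that a conjugating automorphism $T$ of $\E$ in the sense of Definition \ref{def:conjugacy of cartans} pulls back to a homeomorphism $\widehat{D}'\to\widehat{D}$, which intertwines the source and range maps (the former by $C_0(X)$-linearity, the latter because $T$ commutes with the left action), and induces an isomorphism of line bundles from the definition of $\calL$ via $D$. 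Alternatively, one can bypass these direct verifications altogether by chaining the bijections of Theorems \ref{thm:corr and atlas one to one} and \ref{thm:cartan and atlas}, both factoring through equivalence classes of normalizing atlases diagonalizing the left action, and then checking that the resulting composite agrees (up to conjugacy) with the $\Theta_f$-construction; this last check is exactly what is carried out in the proof of Proposition \ref{prop:twtg to cartan}.

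The main obstacle is ensuring that $\Gamma_0(\widehat{D},r,s,\calL)$ is isomorphic to $\E$ not merely as a Hilbert module (which is Proposition \ref{prop:cartan in compacts iso of modules}) but as a $C^*$-correspondence, meaning the left action defined by $f\cdot\xi(z) = f(r(z))\xi(z)$ matches $\varphi(f)$ under the module isomorphism. This reduces to verifying that for $f\in C_0(Y)$, the operator $\varphi(f)\in D$ corresponds under the Gelfand isomorphism $D\cong C_0(\widehat{D})$ to the function $f\circ r$; this is a tautology from the definition $r\coloneqq\varphi^*$, but one must carefully unwind the identifications between the fiberwise action of $D$ on $\E^D$ and the action of $C_0(\widehat{D})$ on $\Gamma_0(\widehat{D},s,\calL)$ via multiplication of sections.
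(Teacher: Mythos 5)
Your proposal follows essentially the same route as the paper's own proof: one direction via Proposition \ref{prop:twtg to cartan}, the other via Lemma \ref{lem:abelian C_0(X) algebra branched covering} (with empty branching set), the line-bundle construction preceding Proposition \ref{prop:cartan in compacts iso of modules}, and Gelfand duality for the range map, followed by the check that the constructions are mutually inverse and respect the equivalence relations. Your write-up is in fact somewhat more detailed on the last point than the paper, which dismisses it with ``follows directly from the definitions,'' but the argument is the same.
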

\begin{proof}
We assume once more without loss of generality that for all $x\in X$, the rank of $\E_x$ is equal to $n\in\N$.

Assume that $D$ is a Cartan subalgebra of $\K(\E)$ containing the image of $\varphi$. Then we obtain a twisted covering $(Z,s,\calL)$ from the discussion before Proposition \ref{prop:cartan in compacts iso of modules}. Here $s:\widehat{D}\to X$ is the Gelfand dual of the inclusion of $C_0(X)$ into $D$, and $\calL$ is the complex line bundle we obtain from regarding $\E$ as a $D$-module. Proposition \ref{prop:cartan in compacts iso of modules} shows that $\Gamma_0(Z,s,\calL)$ is isomorphic to $\E$. Using Proposition \ref{prop:twtg to cartan} to construct a Cartan in $\K(\Gamma_0(Z,s,\calL))$ gives back $D$. Hence the reconstruction of a twisted covering from a Cartan subalgebra is inverse to obtaining a Cartan from a twisted covering. 

Now we take the left action into account. Since $\varphi$ is nondegenerate and its image is contained in $D$, Gelfand duality yields a proper continuous map $r$ from $\widehat{D}$ to $Y$. Thus we obtain a proper twisted topological $Y$-$X$-covering $(Z,r,s,\calL)$ associated to $\E$.

Conversely, assume that $(Z,r,s,\calL)$ is a proper twisted topological $Y$-$X$-correspondence associated to $\E$. We have shown in Proposition \ref{prop:twtg to cartan} that the image of $\varphi$ is contained in the Cartan subalgebra. 

That the constructions preserve isomorphism classes follows directly from the definitions. 
\end{proof}

\subsection{Completing the diagram}

There are two last results completing the picture that we have drawn so far. The first one concerns the commutativity of Diagram (\ref{diag:triangle}). The second result tells us what happens if, instead of fixing the $C^*$-correspondence, we take two different $C^*$-correspondences and ask when the associated topological correspondences, atlases or Cartan subalgebras are isomorphic.

\begin{proposition}\label{prop:commutativity of diagram}
Diagram (\ref{diag:triangle}) commutes up to isomorphism. 
\end{proposition}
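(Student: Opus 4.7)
The three arrows in Diagram \ref{diag:triangle} are bijections between isomorphism classes by Theorems \ref{thm:corr and atlas one to one}, \ref{thm:cartan and atlas}, and \ref{thm:corr to cartan}. My plan is to verify commutativity by starting at the twisted-correspondence vertex and showing that the two paths to the Cartan-subalgebra vertex (directly via Theorem \ref{thm:corr to cartan} versus through the atlas vertex via Theorems \ref{thm:corr and atlas one to one} and \ref{thm:cartan and atlas}) produce conjugate Cartan subalgebras; the remaining commutativity statements will then follow formally from the bijectivity of the arrows.

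Fix $(Z,r,s,\calL)$ associated to $\E$ and, after reducing to the constant-rank case (Remark \ref{rem:constant rank}), let $\mathcal{P}$ be the principal $\calU(N_{D_n})$-bundle of Theorem \ref{thm:twisted coverings and principal bundles} and fix an atlas of $\mathcal{P}$. First I would trace through Proposition \ref{prop:correspondence to atlas} to obtain the atlas $(U_i, h_i)$ of $\V$ together with the associated frame $\{\xi_{ik}\}$, and then through Proposition \ref{prop:cartan and atlas} to obtain a Cartan subalgebra $D_1 \sub \K(\E)$; by construction, $D_1$ is characterised as the maximal abelian subalgebra under which each $\xi_{ik}(x)$ is an eigenvector. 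Next I would compute the direct image of $(Z,r,s,\calL)$ under Theorem \ref{thm:corr to cartan}, namely $D_2 = \{\Theta_f : f \in C_0(Z)\}$ inside $\K(\Gamma_0(Z,s,\calL))$. By Lemma \ref{lem:canonical sections}, the isomorphism $\Phi$ from Proposition \ref{prop:module iso} identifies $\xi_{ik}$ with the sections $\tilde{\xi}_{ik}$ by which the proof of Proposition \ref{prop:twtg to cartan} characterises $D_2$, so the induced map on compact operators sends $D_1$ to $D_2$, yielding the required conjugacy.

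The main obstacle I anticipate is not the equality itself (the proof of Proposition \ref{prop:twtg to cartan} essentially performs this comparison) but rather the careful bookkeeping required to reconcile the various intermediate choices involved in each construction (atlas of $\mathcal{P}$, partition of unity from Remark \ref{rem:frame}, and so on) so that the argument respects equivalence and conjugacy classes rather than individual representatives. This will be handled by invoking the already-established bijections from Theorems \ref{thm:corr and atlas one to one} and \ref{thm:cartan and atlas}, which guarantee that different admissible choices lead to equivalent atlases and conjugate Cartans. Once commutativity is verified along one triangular path, the two other orientations follow immediately by composing with the inverse bijections.
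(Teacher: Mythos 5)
Your proposal is correct and follows essentially the same route as the paper: the paper's proof also checks commutativity only along the path from the twisted-correspondence vertex to the Cartan vertex, and reduces it to the observation (already made in the proof of Proposition \ref{prop:twtg to cartan}) that the isomorphism $\K(\Gamma_0(\mathcal{P}\times\C^n))\to\K(\Gamma_0(Z,r,s,\calL))$ induced by Proposition \ref{prop:module iso} carries the atlas-derived Cartan to $\{\Theta_f:f\in C_0(Z)\}$. Your additional remarks on well-definedness at the level of equivalence and conjugacy classes are consistent with how the paper handles this via the established bijections.
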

\begin{proof}
If $(Z,r,s,\calL)$ is a proper twisted topological $Y$-$X$-correspondence associated to $\E$, then we have two ways of obtaining a Cartan subalgebra. We can either use Theorem \ref{thm:corr to cartan}, or take a detour via Theorem \ref{thm:corr and atlas one to one} and \ref{thm:cartan and atlas}. Denote the first Cartan by $D$, and the second one by $D'$. 

We have shown in the proof of Proposition \ref{prop:twtg to cartan} that the isomorphism from $\K(\Gamma_0(\mathcal{P}\times\C^n))$ to $\K(\Gamma_0(Z,r,s,\calL))$ sends $D'$ to $D$. This concludes the proof.
\end{proof}

So far, we have only dealt with topological correspondences, atlases and Cartans associated to a fixed $C^*$-correspondence. This changes in the following proposition. Notice that Condition (c) did not appear before, while the other conditions were used previously. We include Condition (c) because we will use it in the proof of Proposition \ref{prop:local conjugacy}.
\begin{proposition}\label{prop:isos for different Cstar correspondences}
Let $\E$ and $\E'$ be $C^*$-correspondences from $C_0(Y)$ to $C_0(X)$ which have proper twisted $Y$-$X$-correspondences $(Z,r,s,\calL)$ and $(Z',r',s',\calL')$, respectively, associated to them. Let $(U_i,h_i)_{i\in I}$ and $(U_i,h_i')_{i\in I}$ be normalizing atlases diagonalizing the left actions that correspond to $(Z,r,s,\calL)$ and $(Z',r',s',\calL')$ as in Theorem \ref{thm:corr and atlas one to one}. Let $D$ and $D'$ be Cartan subalgebras associated to the $Y$-$X$-correspondences via Theorem \ref{thm:corr to cartan}. Then the following are equivalent:
\begin{itemize}
\item[(a)] The twisted correspondences $(Z,r,s,\calL)$ and $(Z',r',s',\calL')$ are isomorphic. 
\item[(b)] The atlases $(U_i,h_i)$ and $(U_i,h_i')$ are equivalent in the sense of Definition \ref{def:equivalence diagonalizing atlases}.
\item[(c)] There exists an isomorphism $\Phi:\E\to\E'$ of $C^*$-correspondences such that the following holds: For all $i\in I$, all $x\in U_i$ and all $k=1,\dots,n_x$ there exists an $l\in\N$ such that $\Phi(\xi_{ik})(x)=\lambda \xi_{il}'(x)$ for some $\lambda\in\T$, where the sections $\xi_{ik}$ and $\xi_{ik}'$ are defined from the two atlases as in Remark \ref{rem:frame}.
\item[(d)] There exists an isomorphism $\Phi:\E\to\E'$ of $C^*$-correspondences such that the induced isomorphism from $\K(\E)$ to $\K(\E')$ maps $D$ to $D'$. 
\end{itemize}
\end{proposition}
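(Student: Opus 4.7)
The plan is to prove the cycle (a)$\Rightarrow$(b)$\Rightarrow$(c)$\Rightarrow$(d)$\Rightarrow$(a), essentially reusing the machinery of Theorems \ref{thm:corr and atlas one to one}, \ref{thm:cartan and atlas}, and \ref{thm:corr to cartan} but now comparing two a priori distinct $C^*$-correspondences. For (a)$\Rightarrow$(b), a homeomorphism $\phi\colon Z\to Z'$ witnessing the isomorphism of twisted correspondences pulls back sections to give a right Hilbert $C_0(X)$-module isomorphism $\Gamma_0(Z,r,s,\calL)\to\Gamma_0(Z',r',s',\calL')$ intertwining the left actions; composing with the given identifications yields a $C^*$-correspondence isomorphism $\Phi\colon\E\to\E'$. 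Theorem \ref{thm:twisted coverings and principal bundles} turns $\phi$ into an isomorphism of the associated principal $\calU(N_{D_n})$-bundles, and the argument from the ``isomorphism implies equivalence'' direction of Theorem \ref{thm:corr and atlas one to one} carries over essentially verbatim to show that the two atlases $(U_i,h_i)$ and $(U_i,h_i')$ are equivalent, with the equivalence data inducing exactly $\Phi$.

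For (b)$\Rightarrow$(c), the equivalence supplies continuous functions $r_i\colon U_i\to\calU(N_{D_{n_x}})$, and the induced isomorphism $\Phi$ acts in the fiber over $x\in U_i$ in local coordinates by $v\mapsto h_i'(x,r_i(x)^{-1}h_i^{-1}(x,v))$; applying this to $\xi_{ik}(x)=h_i(x,\gamma_i(x)e_k)$ and using that every element of $\calU(N_{D_{n_x}})$ sends each standard basis vector to a unit-modulus scalar multiple of another one gives the required relation $\Phi(\xi_{ik})(x)=\lambda\,\xi_{il}'(x)$. For (c)$\Rightarrow$(d), note that on the open support of $\gamma_i$ the rank-one projections $q_{ik}\colon x\mapsto\xi_{ik}(x)\langle\xi_{ik}(x),\cdot\rangle/\|\xi_{ik}(x)\|^2$ are the spectral projections coming from the diagonalization of the left action; by the identification in Proposition \ref{prop:cartan and atlas} they lie in $D$ and, together with $C_0(X)$, locally generate $D$. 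Condition (c) then forces the induced isomorphism $\K(\E)\to\K(\E')$ to send these local generators of $D$ to the corresponding local generators of $D'$, whence $\Phi(D)=D'$.

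Finally, for (d)$\Rightarrow$(a), the $C_0(X)$-linear restriction of the induced isomorphism $\K(\E)\to\K(\E')$ to the abelian subalgebras is an isomorphism $D\to D'$ over $C_0(X)$, which by Gelfand duality (Remark \ref{rem:gelfand}) dualizes to a homeomorphism $\phi\colon Z'\to Z$ with $s\circ\phi=s'$; the fact that $\Phi$ intertwines the left actions $\varphi,\varphi'$ forces $r\circ\phi=r'$; and interpreting $\Phi$ as a $D$-module isomorphism $\E^D\to(\E')^{D'}$ in the sense of Section \ref{sect:cartans and corrs} identifies the fiberwise line bundles, yielding $\phi^*\calL'\cong\calL$.

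The main obstacle I anticipate is this last step: the covering and range-map data fall out cleanly from Gelfand duality, but pinning down the line-bundle isomorphism requires tracking the $D$-module structure used in the construction preceding Proposition \ref{prop:cartan in compacts iso of modules} through the correspondence isomorphism, so that the $D$- and $D'$-valued inner products from \eqref{eq:inner product} match up fiber by fiber. Everything else in the argument is essentially routine bookkeeping, extracting content from the theorems already proved and gluing in the (c) step, which is the only genuinely new piece.
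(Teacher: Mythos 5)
Your proposal is correct and follows essentially the same route as the paper: the same cycle (a)$\Rightarrow$(b)$\Rightarrow$(c)$\Rightarrow$(d)$\Rightarrow$(a), with (a)$\Rightarrow$(b) and (d)$\Rightarrow$(a) delegated to the arguments of Theorems \ref{thm:corr and atlas one to one} and \ref{thm:corr to cartan}, (b)$\Rightarrow$(c) via the local-coordinate action of the equivalence data $r_i(x)\in\calU(N_{D_{n_x}})$ on the standard basis, and (c)$\Rightarrow$(d) via the rank-one projections $\xi_{ik}(x)\xi_{ik}(x)^*$ spanning the fibers of $D$. The only detail worth adding in (c)$\Rightarrow$(d) is the explicit observation that, since $\Phi$ preserves inner products, the assignment $k\mapsto l$ is a bijection in each fiber, so the generators of $D_x$ are carried onto all of $(D')_x$.
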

\begin{proof}
The proofs of the implications from (d) to (a) and from (a) to (b) are exactly the same as in Theorems \ref{thm:corr and atlas one to one} and \ref{thm:corr to cartan}.

We show that (b) implies (c). Recall from the beginning of Section \ref{sect:equiv of atlases} that if the functions implementing the equivalence between the atlases are called $r_i$, then the isomorphism from $\V$ to $\V'$ is defined by sending $v\in\V_x$ with $x\in U_i$ to $h_i'(x,r_i(x)^{-1}h_i^{-1}(x,v))$. Hence $\xi_{ik}(x)=h_i(x,\gamma_i(x)e_k)$ is send to $h_i'(x,\gamma_i(x)r_i(x)^{-1}e_k)$. Since $r_i(x)$ lies in $\calU(N_{D_{n_x}})$, there exist $l\in\{1,\dots,n_x\}$ and $\lambda\in\T$ such that $r_i^{-1}(x)e_k=\lambda e_l$. Thus the image of $\xi_{ik}(x)$ is $\lambda\xi_{il}(x)$. This establishes (c).

For the implication from (c) to (d) recall from Equation (\ref{eq:cartan in compacts}) that for all $i\in I$ we have $\mathcal{D}_{U_i}=H_i(U_i\times D_n)$, where $H_i$ is given by mapping $(x,a)\in U_i\times M_n$ to the operator $h_i(x,ah_i^{-1}(x,\cdot))$. Here we again assume that $\E$ has constant rank $n$. Hence for every $x\in U_i$ the fiber $D_x$ is the linear span of the operators $\{\xi_{ik}(x)\xi_{ik}(x)^*\}_{k=1}^{n}$. The same holds for $D'$. If $\Psi:\K(\E)\to\K(\E')$ is the isomorphism induced by $\Phi$, then 
\begin{equation}\label{eq:iso preserves cartan}\Psi_x(\xi_{ik}(x)\xi_{ik}^*(x))=\Phi(\xi_{ik})(x)\Phi(\xi_{ik})(x)^*=\xi_{il}'(x)\xi_{il}'(x)^*.\end{equation}
Since $\Phi$ preserves the inner product, $\Phi_x$ does as well. Hence the image of the set $\{\xi_{ik}(x)\}_{k=1}^n$ under $\Phi_x$ is the set $\{\xi'_{ik}(x)\}_{k=1}^n$. Thus every $l=1,\dots,n$ appears exactly once on the right hand side of Equation (\ref{eq:iso preserves cartan}). Therefore $\Psi_x$ maps $D_x$ to $(D')_x$. Since $i\in I$ and $x\in U_i$ were arbitrary, this shows that $\Phi$ maps $D$ to $D'$. We have established (d).
\end{proof}

\section{Applications and examples}\label{sec:applications}
\subsection{Twisted coverings over spheres}\label{sec:twisted coverings over spheres}
As a concrete example, we want to investigate which twisted coverings, and hence which twisted topological graphs and twisted topological correspondences, exist over the spheres $S^k$. This means that for any given $k\in\N$ we first need to characterize the covering spaces over $S^k$, and then complex line bundles on those covering spaces. 

For any connected space $X$, it is a classical result that $n$-sheeted covering spaces of $X$ are classified by equivalence classes of homomorphisms from the fundamental group $\pi_1(X)$ to $S_n$, see for example \cite{Hatcher:2002}. Here two homomorphisms $\rho$ and $\chi$ are called equivalent if there exists $g\in S_n$ such that $\rho=g\chi g^{-1}$. Since all spheres except $S^1$ are simply connected, we immediately see that only $S^1$ has nontrivial covering spaces. 

The fundamental group of $S^1$ is given by $\pi_1(S^1)\cong\Z$. Hence $n$-sheeted covering spaces of $S^1$ are classified by equivalence classes of homomorphisms from $\Z$ to $S_n$. Such a homomorphism is completely determined by specifying the image of the generator $1\in\Z$. Taking the equivalence relation into account, we see that there is exactly one equivalence class of homomorphisms for each conjugacy class of $S_n$. 

Vector bundles over the spheres are described by the well-known clutching construction. According to this construction, isomorphism classes of complex rank $n$ vector bundles over $S^k$ correspond bijectively to homotopy classes of continuous maps from $S^{k-1}$ to $Gl(n,\C)$, that is, to elements of $\pi_{k-1}(Gl(n,\C))$. We are interested in complex line bundles, so we have to consider $\pi_{k-1}(\C^\times)$. 

It follows that there are no nontrivial complex line bundles over $S^1$, and more generally, no nontrivial complex vector bundles. Note that this gives a simple explanation of the example in \cite[Section 5]{frausino2023}. There the authors considered the two different two-fold covering spaces of $S^1$, and showed that the associated vector bundles are trivial. 

Since all covering spaces of $S^1$ are disjoint copies of $S^1$, we obtain that any twisted covering over $S^1$ has trivial twist. To summarize, for every $n\in\N$ there is only the trivial complex vector bundle of rank $n$ over $S^1$. This trivial vector bundle has exactly $p(n)$ associated coverings, where $p(n)$ is the number of conjugacy classes of $S_n$. None of these coverings admits a nontrivial twist. 

For $S^2$ we have seen above that there are no nontrivial covering spaces. However, we have $\pi_1(\C^\times)\cong\Z$ and hence there are nontrivial line bundles. We write $H$ for the nontrivial line bundle whose first Chern class $c_1(H)$ is the generator $1\in\Z\cong H^2(S^2,\Z)$. One can show that $c_1(\V_1\oplus \V_2)=c_1(\V_1)+c_1(\V_2)$ holds for all vector bundles $\V_1$ and $\V_2$ over $S^2$. Furthermore, we always have $c_1(\calL_1\otimes\calL_2)\cong c_1(\calL_1)+c_1(\calL_2)$ for line bundles $\calL_1$ and $\calL_2$. Using these two identities we see that the rank $n$ bundle \[H_{l_1,\dots,l_n}\coloneqq \bigoplus_{i=1}^nH^{l_i}\] for $l_1,\dots,l_n\in\Z$ has first Chern class $l_1+\dots+l_n$. Here we use the conventions that $H^{-l}\coloneqq (H^*)^l$, where $H^*$ is the adjoint bundle, and that $H^0$ is the trivial line bundle. It is a well-known fact that the first Chern class together with the rank is a complete invariant for vector bundles on CW-complexes of dimension $\leq 4$, see for example \cite{Rosenberg:2024}. Hence every complex rank $n$ vector bundle $\V$ over $S^2$ is isomorphic to $H_{l_1,\dots,l_n}$, where $l_1,\dots,l_n$ have to be chosen such that $l_1+\dots+l_n=c_1(\V)$. We obtain that every complex vector bundle over $S^2$ has countably many pairwise inequivalent twisted coverings associated to it, whose underlying covering spaces are trivial. 

We now turn to $S^k$ for $k\geq 3$. Each of those spheres is simply connected, and hence has no nontrivial covering spaces. They also do not support any nontrivial line bundles, since $\pi_{k-1}(\C^\times)$ is trivial for $k\geq 3$. However, they do support nontrivial complex vector bundles of rank greater than one. We thus see that there are vector bundles on $S^k$ for $k\geq 3$ that do not have associated twisted coverings. In fact, only the trivial bundle of each rank has a twisted covering associated to it. It is the trivial covering with trivial twist.  

Using these results and Theorem \ref{thm:corr to cartan}, we can describe Cartan subalgebras in $n$-homogeneous $C^*$-algebras over spheres. It follows from \cite[Theorem 4.85]{raeburnwilliams:1998} that such an $n$-homogeneous algebra is of the form $\K(\Gamma_0(\V))$, for $\V$ a vector bundle over $S^k$, if and only if its Dixmier--Douady class vanishes. By \cite[Hooptedoodle 4.91]{raeburnwilliams:1998}, this Dixmier--Douady class is an $n$-torsion element of $H^3(S^k,\Z)$. This group is nonzero only for $k=3$, in which case it is given by the integers. Thus the Dixmier--Douady class of a $n$-homogeneous $C^*$-algebra over a sphere always vanishes. 

This reproduces the results regarding Cartan subalgebras of $n$-homogeneous $C^*$-algebras over spheres from \cite[Section 2.2]{LiRenault:2019}. For $k=4$ we recover the example from the appendix of \cite{Kumjian:1985} of a homogeneous $C^*$-algebra with spectrum $S^4$ which does not have a Cartan subalgebra.

\subsection{Covering space with nontrivial associated vector bundle}\label{sect:covering with nontrivial vector bundle}

As discussed in \cite{frausino2023}, it does not seem to be known whether there exists a covering space whose associated vector bundle is nontrivial. We have seen in Section \ref{sec:twisted coverings over spheres} that there does not exist one if the base space is a sphere. Using the results of Section \ref{sect:twisted coverings and principal bundles} we obtain that the question is equivalent to the following: Does there exist a principal $S_n$-bundle over some space, and for some $n\in\N$, whose associated fiber bundle under the action of $S_n$ on $\C^n$ via permutation matrices is nontrivial? 

 The following example was provided by ChatGPT \cite{openai2025chatgpt}: Consider the real projective plane $\R P^2$. It has fundamental group $\Z_2$ and universal covering $p:S^2\to \R P^2$, which is two-sheeted. We obtain a principal bundle $\mathcal{P}=(S^2,p,\R P^2)$ with structure group $S_2=\Z_2$. 

Write $\pi$ for the representation of $S_2$ on $\C^2$ by permutation matrices. This representation splits into a direct sum of the two irreducible representations of $S_2$: We have $\pi=\pi_+\oplus\pi_-$, where $\pi_+$ is the trivial representation on $\C$, and $\pi_-$ is the sign representation that maps $-1\in\Z_2=\{1,-1\}$ to the matrix $(-1)$. Thus the vector bundle splits into a direct sum, 
\[\mathcal{P}\times_\pi\C^2=(\mathcal{P}\times_{\pi_+}\C)\oplus(\mathcal{P}\times_{\pi_-}\C).\]
Clearly $\mathcal{P}\times_{\pi_+}\C$ is the trivial line bundle over $\R P^2$. The bundle $\mathcal{P}\times_{\pi_-}\C$ has non-trivial total Chern class: Write $(\mathcal{P}\times_{\pi_-}\C)_\R$ for $\mathcal{P}\times_{\pi_-}\C$ regarded as a real vector bundle (which has rank two). It is the direct sum of two copies of $\mathcal{P}\times_{\pi_-}\R$, where by slight abuse of notation we use $\pi_-$ to denote the sign representation on $\R$. It is easy to show that $\mathcal{P}\times_{\pi_-}\R$ is in fact the tautological line bundle on $\R P^2$. It follows that the second Stiefel-Whitney class $w_2$ of $(\mathcal{P}\times_{\pi_-}\C)_\R$ is the generator of $H^2(\R P^2,\Z_2)\cong\Z_2$. 

By \cite[Proposition 3.8]{Hatcher:2003}, $w_2((S^2\times_{\pi_-}\C)_\R)$ is the image of the first Chern class $c_1(S^2\times_{\pi_-}\C)$ under the coefficient homomorphism from $H^2(\R P^2,\Z)$ to $H^2(\R P^2,\Z_2)$. Since $H^2(\R P^2,\Z)=H^2(\R P^2,\Z_2)=\Z_2$ this coefficient homomorphism is in fact an isomorphism of groups. Thus $c_1(S^2\times_{\pi_-}\C)$ is nontrivial, which concludes the proof. 

By additivity of the total Chern class, we obtain that $c(\mathcal{P}\times_\pi\C^2)\neq 1$. In particular, $\mathcal{P}\times_\pi\C^2$ is nontrivial. 

Before ChatGPT, Georg Jakob had suggested to the author to look at the two-sheeted covering of $\mathrm{SO}(n)$ by $\mathrm{Spin}(n)$. The author has not pursued this further, but it seems to be in a similar spirit and might provide another example. 

\subsection{Reconstructing topological correspondences from the left action}\label{sect:reconstructing corr from left action}
In \cite{frausino2023} the authors provided an example of two different coverings whose associated vector bundles are isomorphic. In Section \ref{sec:twisted coverings over spheres} we have seen more such examples. Thus when passing from a twisted covering to its Hilbert module, one in general loses information.

In these cases it is sometimes possible to reconstruct the covering from the left action. The basis for this is the observation from the proof of Proposition \ref{prop:correspondence to atlas} that if $(Z,r,s,\calL)$ is a twisted $Y$-$X$-correspondence, then the eigenvalues of $\varphi(f)_x$ are given by $\{f(r(y)):y\in s^{-1}(x)\}$, where $\varphi$ is the left action of the $C^*$-correspondence $\Gamma_0(Z,r,s,\calL)$. We will now see an example of how this observation can be used to reconstruct a topological graph from its associated $C^*$-correspondence. 
\begin{example}\label{ex:reconstructing trivial covering of the circle}
Take $Z=S^1\times\{1,2\}$ and $X=Y=S^1$ and define $s:Z\to X$ as $s(x,i)=x$ for all $x\in S^1$ and $i\in\{1,2\}$. This is the trivial 2-sheeted covering of $S^1$. Choose two distinct points $x_1$ and $x_2$ in $S^1$ and define $r:Z\to X$ as $r(x,i)=x_i$ for $i\in\{1,2\}$. We obtain a topological graph $(Z,r,s)$. The associated $C^*$-correspondence is given by the trivial Hilbert $C(S^1)$-module of rank two, namely $C(S^1)\oplus C(S^1)\cong C(S^1)\otimes \C^2$, with the left action 
\[\varphi(f)=1_{C(S^1)}\otimes\begin{pmatrix} f(x_1)&0\\0&f(x_2)\end{pmatrix}\]
where we have chosen an appropriate basis on the fibers. It is possible to recover the topological graph from this left action: The eigenvalues of $\varphi(f)_x$ are $f(x_1)$ and $f(x_2)$ for all $x\in S^1$ and $f\in C(S^1)$. This implies that the image of the range map consists of the two distinct points $x_1$ and $x_2$. The Hilbert module has rank two, and there are only two 2-sheeted coverings of $S^1$. One is the trivial covering, and the other one is given by mapping $z\in S^1$ to $z^2\in S^1$, and is connected. Since continuous functions map connected sets to connected sets, the image of the range map can only consist of two distinct points if the covering is trivial. 
\end{example}
We will exploit this observation more systematically, and in a slightly different framework, using the results of Section \ref{sect:perspective of the compacts}. There we have seen that if $\E$ is a proper nondegenerate $\sigma$-finitely generated and $\sigma$-projective $C^*$-correspondence from $C_0(Y)$ to $C_0(X)$, then specifying a twisted topological correspondence $(Z,r,s,\calL)$ associated to $\E$ is the same thing as specifying a Cartan subalgebra in $\K(\E)$ containing the image of $\varphi$.

 Assume that $\E$ is full as a right Hilbert $C_0(X)$-module. We will make this assumption throughout this section. It is convenient, because it ensures that right multiplication with any function in $C_0(X)$ gives a compact operator. As explained in Remark \ref{rem:constant rank}, it is not a serious restriction. We write $\iota$ for the map from $C_0(X)$ to $\K(\E)$ given by regarding functions as compact operators acting by right multiplication.

 Say we are given a Cartan subalgebra $C$ of $\K(\E)$ which contains the image of $\varphi$. Let $D$ be an abelian subalgebra of $C$ which contains the images of $\iota$ and $\varphi$. Write $s_D$ and $r_D$ for the Gelfand duals of $\iota$ and $\varphi$, respectively, regarded as maps from $C_0(X)$ to $D$. By Lemma \ref{lem:abelian C_0(X) algebra branched covering} we know that $s_D:\widehat{D}\to X$ is a branched covering. It follows from the functoriality of Gelfand duality that the diagram 
% https://q.uiver.app/#q=WzAsNCxbMiwxLCJcXHdpZGVoYXR7Q30iXSxbMCwxLCJcXHdpZGVoYXR7RH0iXSxbMSwwLCJYIl0sWzEsMiwiWCJdLFswLDEsIlxcd2lkZWhhdHtqfSIsMl0sWzEsMiwic19EIl0sWzAsMiwicyIsMl0sWzAsMywiciIsMl0sWzEsMywicl9EIiwyXV0=
\[\begin{tikzcd}
	& X \\
	{\widehat{D}} && {\widehat{C}} \\
	& X
	\arrow["{s_D}", from=2-1, to=1-2]
	\arrow["{r_D}"', from=2-1, to=3-2]
	\arrow["s"', from=2-3, to=1-2]
	\arrow["{\widehat{j}}"', from=2-3, to=2-1]
	\arrow["r"', from=2-3, to=3-2]
\end{tikzcd}\]
commutes, where $\widehat{j}$ is the Gelfand dual of the inclusion $j$ of $D$ into $C$. In other words, the topological correspondence $(\widehat{C},r,s)$ has to factor through the tripel $(\widehat{D},r_D,s_D)$. Note however that this triple is not a topological correspondence in general, since $s_D$ is not a local homeomorphism unless the rank of $D$ is locally constant.

If we are not given the Cartan subalgebra, but just the $C^*$-correspondence $\E$, then we can look for abelian subalgebras of $\K(\E)$ which contain the images of $\iota$ and $\varphi$, and which have to be contained in \emph{any} Cartan subalgebra of $\K(\E)$ containing $\im\varphi$. The most obvious choice is the $C^*$-algebra generated by the images of $\iota$ and $\varphi$, which we write as $C^*(\im\iota,\im\varphi)$. It is abelian and contains both $\im\iota$ and $\im\varphi$. Furthermore it has to be contained in any subalgebra of $\K(\E)$ containing $\im\iota$ and $\im\varphi$. 

\begin{example} \label{ex:reconstructing trivial covering of the circle 2}
We revisit Example \ref{ex:reconstructing trivial covering of the circle}. We have 
\[C^*(\im\iota,\im\varphi)\cong C(S^1)\otimes D_2\]
whose Gelfand dual is $S^1\times\{1,2\}$. The inclusion $\iota$ is given by mapping $f\in C(S^1)$ to $f\otimes 1_{M_2}$. Its Gelfand dual $s=\widehat{\iota}:S^1\times\{1,2\}\to S^1$ is the trivial 2-sheeted covering. The range map, which is the Gelfand dual of $\varphi:C(S^1)\to C^*(\im\iota,\im\varphi)$, is given by mapping $(x,i)$ to $x_i$, for $i\in\{1,2\}$. Hence this recovers the topological graph we started with in Example \ref{ex:reconstructing trivial covering of the circle}.
\end{example}

As another application of this approach we show that, depending on the spaces $X$ and $Y$, there might exist $C^*$-correspondences from $C_0(Y)$ to $C_0(X)$ that are trivial as right Hilbert $C_0(X)$-modules, but do not come from a twisted $Y$-$X$-correspondence.   

\begin{example}
    Take $X=Y=S^3$. We have seen in Section \ref{sec:twisted coverings over spheres} that there exists a vector bundle $\V$ over $S^3$ with no twisted covering associated to it. Let $\mathcal{W}$ be a vector bundle such that $\V\oplus\mathcal{W}$ is isomorphic to the trivial bundle $S^3\times\C^n$ for some $n\in\N$. We define a left action $\varphi(f)(\xi,\eta)=(f(x_1)\xi,f(x_2)\eta)$ for distinct points $x_1$ and $x_2$ in $S^3$, where $(\xi,\eta)$ is an element of $\Gamma(S^3\times\C^n)$ with $\xi\in\Gamma(\V)$ and $\eta\in\Gamma(\mathcal{W})$. 

    We have $C^*(\im\iota,\im\varphi)\cong C(S^3)\otimes D_2$. If there was a twisted topological graph associated to $(\Gamma(S^3\times\C^n),\varphi)$, then it would have to factor through the trivial $2$-sheeted covering of $S^3$. However, this would give a twisted covering associated to $\Gamma(\V)$, which we know is not possible. 
\end{example}
It is clear that the larger the abelian subalgebra $D$ is, the more information about the topological correspondence we can generally expect to recover. To find a larger algebra than $C^*(\im\iota,\im\varphi)$ we need a lemma first.
\begin{lemma}
Let $A$ be a $C^*$-algebra and $C\subset A$ a maximal abelian subalgebra. If $D$ is an abelian subalgebra of $A$ contained in $C$, then the double commutant $D''$ relative to $A$ is contained in $C$ as well.
\end{lemma}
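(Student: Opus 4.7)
The plan is to reduce the claim to the standard fact that for a maximal abelian $C^*$-subalgebra $C$ of $A$, the relative commutant satisfies $C' \cap A = C$, and then use monotonicity of the commutant operation.

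First I would record the trivial direction. Since $D \subseteq C$ and $C$ is abelian, every element of $C$ commutes with every element of $D$, so $C \subseteq D'$. The operation of taking the commutant in $A$ reverses inclusions, hence $D'' = (D')' \subseteq C'$. Since $D'' \subseteq A$ by definition, we obtain $D'' \subseteq C' \cap A$.

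Next I would establish the key lemma $C' \cap A = C$ for $C$ maximal abelian in $A$. The inclusion $C \subseteq C' \cap A$ is clear from abelianness of $C$. For the reverse inclusion, take $a \in C' \cap A$ and decompose $a = a_1 + i a_2$ into self-adjoint parts $a_j = a_j^* \in A$. Each $a_j$ still commutes with every element of $C$, so the $C^*$-subalgebra $B_j \coloneqq C^*(C \cup \{a_j\}) \subseteq A$ is abelian (two self-adjoint commuting generating sets generate an abelian $C^*$-algebra). Since $B_j$ is an abelian subalgebra of $A$ containing $C$, maximality of $C$ forces $B_j = C$, so $a_j \in C$, and hence $a \in C$.

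Combining these two observations finishes the proof: $D'' \subseteq C' \cap A = C$. There is no serious obstacle here; the only subtle point is taking care that the ambient commutant is taken inside $A$ rather than in a larger von Neumann algebra, which is precisely why we must pass through $D'' \cap A$ and invoke the $C^*$-algebraic maximality characterisation above rather than any von Neumann bicommutant statement.
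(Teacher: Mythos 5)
Your proof is correct and follows essentially the same route as the paper: note $C\subseteq D'$, hence $D''\subseteq C'$, and conclude via the identity $C'\cap A=C$ for a maximal abelian subalgebra. The only difference is that the paper simply asserts this last identity, while you supply the standard self-adjoint-decomposition argument for it; that argument is valid.
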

\begin{proof}
Since $C$ is abelian and $D$ is contained in $C$, any element of $C$ commutes with all elements of $D$. Hence $C$ is contained in $D'$. This implies that $D''$ is contained in $C'$. But since $C$ is maximal abelian inside $A$, the commutant of $C$ inside of $A$ is $C$ itself. This concludes the proof. 
\end{proof}
Hence the double commutant of $C^*(\im\iota,\im\varphi)$ inside of $\K(\E)$ is another abelian subalgebra that is contained in any Cartan subalgebra of $\K(\E)$ which contains the image of $\varphi$. Thus we can use it to obtain information about twisted topological correspondences associated to $\E$.  We want to understand this double commutant better.

\begin{lemma}\label{lem:double commutant}
Let $\E$ be a full $\sigma$-finitely generated and $\sigma$-projective Hilbert $C_0(X)$-module of rank $n$. Let $D$ be an abelian subalgebra of $\K(\E)$ containing $C_0(X)$. Write $B$ for the closed branching set of the branched covering associated to $D$ as in Lemma \ref{lem:abelian C_0(X) algebra branched covering} Then 
\[D''=\{a\in \K(\E):a(x)\in D_x\quad\text{for all }x\in X\backslash B\}.\]
\end{lemma}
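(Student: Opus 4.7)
The plan is to prove the two set inclusions separately. For the containment ``$\supseteq$'', take $a \in \K(\E)$ with $a(x) \in D_x$ for every $x \in X\backslash B$, and let $b \in D'$ be arbitrary. Since $b$ commutes with every element of $D$, each fiber $b(y)$ commutes with $D_y$ in $\K(\E)_y \cong M_n$. For $y \in X\backslash B$ the hypothesis $a(y) \in D_y$ then yields $(ab - ba)(y) = 0$. By Lemma \ref{lem:abelian C_0(X) algebra branched covering}, the branching set $B$ is closed and nowhere dense, so $X\backslash B$ is dense in $X$. Because $\K(\E)$ is a continuous $C_0(X)$-algebra, the fiberwise norm $y \mapsto \|(ab-ba)(y)\|$ is continuous; vanishing on a dense set forces it to vanish identically, giving $ab = ba$ and hence $a \in D''$.

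For the reverse containment ``$\subseteq$'', fix $a \in D''$ and $x \in X\backslash B$; I must show $a(x) \in D_x$. The key step is an extension lemma: for every $c$ in the commutant $D_x'$ computed inside the finite-dimensional algebra $\K(\E)_x \cong M_n$, there exists $b \in D'$ with $b(x) = c$. Granting this, the relation $a \in D''$ gives $a(x)c = ca(x)$ for every $c \in D_x'$, so $a(x) \in D_x''$ in $M_n$. Since $C_0(X) \subseteq D$, the fiber $D_x$ is a unital C*-subalgebra of the finite-dimensional algebra $M_n$, hence a von Neumann algebra, and the double commutant theorem yields $D_x'' = D_x$. This forces $a(x) \in D_x$.

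To build the required $b$, I would use the structure established in the proof of Lemma \ref{lem:abelian C_0(X) algebra branched covering}: an open neighborhood $U$ of $x$ with $\overline{U}$ compact and contained in $X \backslash B$ on which the branched covering is evenly covered of constant multiplicity $m$, together with projections $p_1, \dots, p_m \in D$ whose values on $U$ are pairwise orthogonal projections of constant rank in each fiber $\K(\E)_y$, summing to the identity. Shrinking $U$ further, I would produce a local trivialization $\K(\E)_U \cong C_0(U) \otimes M_n$ in which each $p_i$ is represented by a fixed projection $\bar p_i \in M_n$; in this trivialization the pointwise commutants $D_y'$ all equal the single subalgebra $\bigoplus_i \bar p_i M_n \bar p_i$. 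For $c \in D_x' = \bigoplus_i \bar p_i M_n \bar p_i$, multiplying the constant section $y \mapsto c$ by a bump function $\phi \in C_c(U)$ with $\phi(x) = 1$ and extending by zero yields a compact operator $b$ with $b(x) = c$ and $b(y) \in D_y'$ for every $y$, which forces $b \in D'$. The main technical obstacle is producing the simultaneously diagonalizing trivialization; this follows by choosing continuous local frames for the ranges of the individual $p_i$ and concatenating them using the decomposition $\id = \sum_i p_i$, which is possible on a small enough neighborhood thanks to local triviality of $\V$ and the locally constant ranks of the $p_i$.
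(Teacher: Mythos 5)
Your proposal is correct and follows essentially the same route as the paper: the inclusion $\supseteq$ via density of $X\backslash B$ and continuity of the fiberwise norm, and the inclusion $\subseteq$ via a local trivialization $\K(\E)_U\cong C_0(U)\otimes M_n$ that simultaneously diagonalizes the projections $p_1,\dots,p_m$ coming from the evenly covered neighborhood. The only difference is one of explicitness: where the paper concludes ``thus $(D'')_x=D_x$'' directly from the trivialization, you spell out the intermediate extension step (producing $b\in D'$ with prescribed value $b(x)=c$ for any $c\in D_x'$ by a bump-function cut-off) and then invoke the finite-dimensional double commutant theorem, which is a welcome clarification of the same argument.
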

\begin{proof}
Write $\tilde{D}$ for the algebra on the right hand side of the above equality. Take $d\in\tilde{D}$ and $a\in D'$. We have $d(x)a(x)=a(x)d(x)$ for all $x\in X\backslash B$. Since $X\backslash B$ is dense in $X$, and using continuity, we see that the equality $d(x)a(x)=a(x)d(x)$ in fact holds for all $x\in X$. In other words, $d$ lies in $D''$. Hence $\tilde{D}$ is contained in $D''$. 

We show the other inclusion. By the proof of Lemma \ref{lem:abelian C_0(X) algebra branched covering}, any point $x$ in $X\backslash B$ has an open neighborhood $U$ such that there exists a base-preserving isomorphism from $C_0(U)\times D_m$ to $D_U$, where $m$ is the dimension of $D_x$. In particular we can find elements $p_1,\dots,p_n$ of $D$ such that for each $y\in U$, the $p_i(y)$ are pairwise orthogonal projections summing up to the identity of $D_y$. The same strategy as in the proof of \cite[Lemma 2.1]{LiRenault:2019}, plus an application of the Gram--Schmidt theorem, yields a base-preserving isomorphism from $\K(\E)_U$ to $C_0(U)\otimes M_n$ sending $D_U$ to $C_0(U)\otimes D_{k_1,\dots,k_m}$. Here $D_{k_1,\dots,k_m}$ is the subalgebra of $M_n$ given by $\C 1_{k_1}\oplus \dots\oplus \C 1_{k_m}$, and $k_1,\dots,k_m$ are natural numbers summing up to $n$. Thus $(D'')_x$ is equal to $D_x$, which concludes the proof. 
\end{proof}
The next example shows that considering the double commutant of $C^*(\im\iota,\im\varphi)$ is sometimes necessary to reconstruct the twisted topological correspondence.
\begin{example}\label{ex:different range maps}
Take $X=Y=[-1,1]$, $Z=[-1,1]\times\{1,2\}$ and let $s$ be the trivial covering $s(x,i)=x$. We define two different range maps,
\[r_1(x,i)=\begin{cases}x\quad&\text{for }i=1,\\-x\quad&\text{for }i=2, \end{cases}\quad\text{and}\quad r_2(x,i)=\begin{cases}|x|\quad&\text{for }i=1,\\-|x|\quad&\text{for }i=2. \end{cases}\]
Denote the left actions of the $C^*$-correspondences $\Gamma_0(Z,r_1,s)$ and $\Gamma_0(Z,r_2,s)$ by $\varphi_1$ and $\varphi_2$, respectively. Then we have
\[C^*(\im\iota,\im\varphi_1)=C^*(\im\iota,\im\varphi_2)=\{f\in C(X,D_2):f(0)\in \C 1_{M_2}\}.\]
The Gelfand dual of this is the wedge sum $[-1,1]\vee[-1,1]$ with the two copies of the interval glued together at 0. The Gelfand dual of the inclusion $\iota$ is in both cases the obvious surjection from $[-1,1]\vee[-1,1]$ to $[-1,1]$. There is a base-preserving homeomorphism on $[-1,1]\vee[-1,1]$ intertwining the two range maps. Thus we see that not only can we not reconstruct the graphs from $C^*(\im\iota,\im\varphi_i)$, we cannot even distinguish them from each other. However, this is possible using the double commutants. We have $C^*(\im\iota,\im\varphi_1)''=C^*(\im\iota,\im\varphi_2)''=C(X,D_2)$, which is immediately clear from Lemma \ref{lem:double commutant}. Hence we can reconstruct the graphs in the same way as in Example \ref{ex:reconstructing trivial covering of the circle 2}.
\end{example}

Examples \ref{ex:reconstructing trivial covering of the circle 2} and \ref{ex:different range maps} show that it is sometimes possible to reconstruct the topological correspondence only from the subalgebras $C^*(\im\iota,\im\varphi)$ or $C^*(\im\iota,\im\varphi)''$ together with $\iota$ and $\varphi$. We want to investigate how much information these subalgebras contain in general. 

\begin{definition}\label{def:iso of subalgebras}
 Let $D_\E$ and $D_\F$ be subalgebras of $\K(\E)$ and $\K(\F)$ containing the images of $\iota_\E$ and $\varphi_\E$, and of $\iota_\F$ and $\varphi_\F$, respectively. Then we write $D_\E\cong_{\iota,\varphi}D_\F$ if there exists a $\ast$-isomorphism $\phi$ from $D_\E$ to $D_\F$ such that $\iota_\F=\phi\circ\iota_\E$ and $\varphi_\F=\phi\circ\varphi_E$. 
\end{definition}

\begin{lemma}\label{lem:range map gives iso of different coverings}
Take locally compact Hausdorff spaces $X$, $Z_1$ and $Z_2$. Let $s_i:Z_i\to X$, $i\in\{1,2\}$ be two proper continuous maps such that for all $x\in X$ the set $s_i^{-1}(x)$ is finite. Assume that there exist maps $r_i:Z_i\to Y$ into a Hausdorff space $Y$ such that for all $x\in X$, the sets $r_1(s_1^{-1}(x))$ and $r_2(s_2^{-1}(x))$ are equal.  Assume also that for all $x\in X$ and $i\in\{1,2\}$ we have $r_i(y)\neq r_i(z)$ for all distinct $y$ and $z$ in $s_i^{-1}(x)$. Then there exists a unique homeomorphism $\phi$ from $Z_1$ to $Z_2$ such that $s_2\circ\phi=s_1$ and $r_2\circ\phi=r_1$. 
\end{lemma}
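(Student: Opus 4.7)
The plan is to define $\phi$ pointwise by the fiberwise bijection forced by the hypotheses, observe that its inverse is defined by the same construction with the roles of $Z_1$ and $Z_2$ swapped, deduce uniqueness at once, and then work hard only on continuity. Concretely, for any $z\in Z_1$ set $x:=s_1(z)$, so that $r_1(z)\in r_1(s_1^{-1}(x))=r_2(s_2^{-1}(x))$; injectivity of $r_2$ on $s_2^{-1}(x)$ yields a unique $w\in s_2^{-1}(x)$ with $r_2(w)=r_1(z)$, and I define $\phi(z):=w$. By construction $s_2\circ\phi=s_1$ and $r_2\circ\phi=r_1$, and any other map with these two properties must agree with $\phi$ by the same injectivity, giving uniqueness. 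Swapping the roles of the indices $1$ and $2$ defines a set-theoretic inverse for $\phi$, so it suffices to prove continuity of $\phi$ alone.

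For continuity at an arbitrary $z_0\in Z_1$, write $x_0:=s_1(z_0)$, $w_0:=\phi(z_0)$, and enumerate the finite fiber $s_2^{-1}(x_0)=\{w_0,w_1,\dots,w_m\}$. Given an open neighborhood $U$ of $w_0$ in $Z_2$, I would first use Hausdorffness of $Z_2$ to pick pairwise disjoint open neighborhoods $U_0\subset U$ and $U_1,\dots,U_m$ of $w_0,\dots,w_m$, then use Hausdorffness of $Y$ together with injectivity of $r_2$ on $s_2^{-1}(x_0)$ to pick pairwise disjoint open neighborhoods $W_0,\dots,W_m$ of the distinct points $r_2(w_0),\dots,r_2(w_m)$, and finally shrink each $U_i$ to $U_i\cap r_2^{-1}(W_i)$ so that $r_2(U_i)\subset W_i$.

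The main obstacle is then transferring this local picture on $Z_2$ to an open neighborhood of $z_0$ in $Z_1$, and this is precisely where properness of $s_2$ is used. Between locally compact Hausdorff spaces a proper map is closed, hence $s_2\bigl(Z_2\setminus\bigcup_{i=0}^{m}U_i\bigr)$ is closed in $X$ and does not contain $x_0$; pulling its complement back along $s_1$ produces an open neighborhood $V_1$ of $z_0$ with $\phi(V_1)\subset\bigcup_i U_i$. To cut out the correct sheet I intersect with $r_1^{-1}(W_0)$: for $z$ in this smaller open neighborhood one has $r_2(\phi(z))=r_1(z)\in W_0$, and disjointness of the $W_i$ forces $\phi(z)\in U_0\subset U$. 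Continuity of $\phi^{-1}$ follows by the identical argument with the roles of $Z_1$ and $Z_2$ interchanged, so $\phi$ is the desired homeomorphism.

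The conceptual point of the proof is that each hypothesis is doing specific work: finiteness of the fibers plus Hausdorffness of $Z_2$ lets us separate the fiber points in $Z_2$, Hausdorffness of $Y$ combined with fiberwise injectivity of $r_2$ lets us distinguish sheets via $r_1$ on the $Z_1$ side, and properness of $s_2$ is the glue that turns the "stay near some $w_i$" condition into an open condition expressible on the base $X$, and hence pulls back to an open condition on $Z_1$. Without any one of these, the continuity argument breaks down.
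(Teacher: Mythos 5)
Your proposal is correct and follows essentially the same strategy as the paper: define $\phi$ fiberwise via the forced bijection, separate the sheets using Hausdorffness of $Y$ and finiteness of the fibers, and use that the proper map $s_2$ is closed to push the neighborhood condition down to $X$ and pull it back along $s_1$. Your symmetry argument for continuity of $\phi^{-1}$ is in fact slightly cleaner than the paper's closing appeal to compactness, since the spaces here are only locally compact.
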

\begin{proof}
Take $x\in X$ and $y\in s_1^{-1}(x)$. By assumption there exists exactly one $\tilde{y}$ in $s_2^{-1}(x)$ such that $r_1(y)=r_2(\tilde{y})$. Hence sending $y$ to $\tilde{y}$ yields a bijective map $\phi$ from $Z_1$ to $Z_2$. Any map $\psi$ such that $s_2\circ\psi=s_1$ and $r_2\circ\psi=r_1$ must send $y$ to $\tilde{y}$, so $\phi$ is unique. 

To show that $\phi$ is continuous, take any open neighborhood $\tilde{U}$ of $\tilde{y}$. Since $r_1$ and $r_2$ are continuous, and since $s_2^{-1}(x)$ is a finite set, there exist open neighborhoods $U$ of $y$ and $U_{\tilde{z}}$ of $\tilde{z}$ for all $\tilde{z}$ in $s_2^{-1}(x)\backslash\{\tilde{y}\}$ such that $r_1(U)\cap r_2(U_{\tilde{z}})=\emptyset$. 

Write $C$ for the complement of $U'\coloneqq \tilde{U}\cup\bigcup_{\tilde{z}\in s_2^{-1}(x)\backslash\{\tilde{y}\}}U_{\tilde{z}}$ in $Z_2$. Proper continuous maps to locally compact Hausdorff spaces are closed, so $s_2(C)$ is closed in $X$. Then $V\coloneqq X\backslash s_2(C)$ is an open neighborhood of $x$ such that $s_2^{-1}(V)$ is contained in $U'$. We shrink $U$ such that $s_1(U)$ is contained in $V$. 

Now take $z\in U$. Then $s_2^{-1}(s_1(z))$ is contained in $U'$. Hence the unique $\tilde{z}\in s_2^{-1}(s_1(z))$ such that $r_1(z)=r_2(\tilde{z})$ must lie in $\tilde{U}$. Thus $\phi(z)=\tilde{z}$ lies in $\tilde{U}$. Since $z\in U$ was arbitrary we obtain that $\phi$ maps $U$ into $\tilde{U}$. This shows that $\phi$ is continuous.  A continuous bijective map between compact Hausdorff spaces is a homeomorphism. 
\end{proof}

\begin{proposition}\label{prop:local iso of modules}
Let $\E$ and $\F$ be two proper nondegenerate full $\sigma$-finitely generated and $\sigma$-projective $C^*$-correspondences from $C_0(Y)$ to $C_0(X)$. We denote $C^*(\im\iota_\E,\im\varphi_\E)$ by $D_\E$. Write $r_\E$ and $s_\E$ for the Gelfand duals of $\varphi_\E:C_0(Y)\to D_\E$ and $\iota_\E:C_0(X)\to D_\E$, respectively. A similar notation is adopted for $\F$. Then the following are equivalent: 
\begin{enumerate}
\item We have $D_\E\cong_{\iota,\varphi}D_\F$.
\item We have $r_\E((s_\E)^{-1}(x))=r_\F((s_\F)^{-1}(x))$ for all $x\in X$.
\end{enumerate}
If the isomorphism from (1) exists, then it is unique.
\end{proposition}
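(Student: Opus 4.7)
The plan is to transport the question to the Gelfand spectra via the non-unital form of Gelfand duality (Remark \ref{rem:gelfand}) and then apply Lemma \ref{lem:range map gives iso of different coverings}. Since $D_\E$ and $D_\F$ are commutative, specifying a $\ast$-isomorphism $\phi:D_\E\to D_\F$ with $\phi\circ\iota_\E=\iota_\F$ and $\phi\circ\varphi_\E=\varphi_\F$ is, by contravariant functoriality, the same as specifying a homeomorphism $\psi:\widehat{D_\F}\to\widehat{D_\E}$ with $s_\E\circ\psi=s_\F$ and $r_\E\circ\psi=r_\F$.

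The direction $(1)\Rightarrow(2)$ is then immediate: such a $\psi$ restricts to a bijection $s_\F^{-1}(x)\to s_\E^{-1}(x)$ for each $x\in X$, and compatibility with the range maps yields the equality of their images. For $(2)\Rightarrow(1)$ I would invoke Lemma \ref{lem:range map gives iso of different coverings} with $Z_1=\widehat{D_\F}$, $Z_2=\widehat{D_\E}$ and the source and range maps just defined. Three of the four hypotheses of that lemma are easy: $s_\E,s_\F$ are proper (being Gelfand duals of nondegenerate inclusions $C_0(X)\hookrightarrow D_\E,D_\F$), their fibers are finite by Lemma \ref{lem:abelian C_0(X) algebra branched covering}, and the agreement on range-map images is precisely (2). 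The remaining hypothesis---that $r_\E$ is injective on each fiber of $s_\E$, and likewise for $\F$---is the one step that requires a small argument, and is where I expect the only (minor) obstacle. It follows because $D_\E$ is generated as a $C^*$-algebra by $\im\iota_\E\cup\im\varphi_\E$, so any character $\chi\in\widehat{D_\E}$ is determined by its restrictions to these two subalgebras, i.e.\ by the pair $(s_\E(\chi),r_\E(\chi))$; hence two distinct characters with the same $s_\E$-image must differ in $r_\E$-image. Once all four hypotheses are verified, Lemma \ref{lem:range map gives iso of different coverings} produces the desired $\psi$, and dualizing via Gelfand duality yields $\phi$.

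Uniqueness of $\phi$ is automatic: any two such isomorphisms agree on $\im\iota_\E$ and on $\im\varphi_\E$, hence on the $C^*$-subalgebra they generate, which is all of $D_\E$. Equivalently, this is the uniqueness clause of Lemma \ref{lem:range map gives iso of different coverings} transported back through Gelfand duality. Beyond the joint-injectivity observation about characters, the whole proof is then a routine assembly of Gelfand--Naimark duality with the previously established lemma.
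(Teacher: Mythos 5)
Your proposal is correct and follows essentially the same route as the paper: both directions are reduced to Lemma \ref{lem:range map gives iso of different coverings} via Gelfand duality, with the only substantive step being the fiberwise injectivity of the range maps. Your argument for that step (a character of $D_\E$ is determined by its restrictions to the generating subalgebras $\im\iota_\E$ and $\im\varphi_\E$, hence by the pair $(s_\E(\chi),r_\E(\chi))$) is a mild, equally valid rephrasing of the paper's argument, which instead separates two points of a fiber by an element of $D_x=\im(\varphi_\E)_x$.
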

\begin{proof}
That (1) implies (2) follows immediately from Definition \ref{def:iso of subalgebras}.

We now proof that (2) implies (1). All we have to show is that $r_\E(y)\neq r_\E(z)$ for all distinct $y$ and $z$ in $s_\E^{-1}(x)$, and that the same holds for $r_\F$, since then the statement of (1) as well as the uniqueness follow from Lemma \ref{lem:range map gives iso of different coverings}. To ease notation we drop the subscripts, since all statements hold regardless of whether we consider $\E$ or $\F$. 

Take distinct points $y$ and $z$ in $s^{-1}(x)$. Then there exists $d\in D_x$ such that $d(y)=0$ and $d(z)=1$. The fiber $D_x$ equals $\im\varphi_x$. Hence there exists $f\in C_0(Y)$ such that $\varphi(f)_x=d$. Now we have \[f(r(y))=\varphi(f)(y)=\varphi(f)_x(y)=d(y)=1,\] and in the same way we can show that $f(r(z))=0$. Thus $r(y)\neq r(z)$. This concludes the proof. 
\end{proof}
\begin{remark}\label{rem:left action spectrum}
Recall once more that by the proof of Proposition \ref{prop:correspondence to atlas}, the eigenvalues of $\varphi_\E(f)_x$ are given by $\{f(r_\E(y)):y\in s_\E^{-1}(x)\}=f(r_\E(s_\E^{-1}(x)))$. Hence Proposition \ref{prop:local iso of modules} shows that the subalgebra $C^*(\im\iota_\E,\im\varphi_\E)$ is completely determined by the eigenvalues of $\varphi(f)$ at each point, and for all $f\in C_0(Y)$. 
\end{remark}
\subsection{Local isomorphism of $C^*$-correspondences}
Recall that we call two $C^*$-correspondences $\E$ and $\F$ from $C_0(Y)$ to $C_0(X)$ isomorphic if there exists a linear inner-product preserving bijection $T$ from $\E$ to $\F$ such that $T(\xi f)=T(\xi)f$ and $T(\varphi_\E(g)\xi)=\varphi_\F(g)T(\xi)$ for all $\xi\in\E$, $f\in C_0(X)$ and $g\in C_0(Y)$. 
\begin{definition}\label{def:local isomorphism}
Let $\E$ and $\F$ be $C^*$-correspondences from $C_0(Y)$ to $C_0(X)$. Then we call $\E$ and $\F$ \emph{locally isomorphic}, and write $\E\cong_{\mathrm{loc}}\F$, if any point $x\in X$ has an open neighborhood $U$ such that the $C^*$-correspondences $\E_U:C_0(X)\to C_0(U)$ and $\F_U:C_0(X)\to C_0(U)$ are isomorphic. Here $\E_U\coloneqq \E C_0(U)$ is defined by restricting the right Hilbert module structure of $\E$ to $C_0(U)$, but leaving the left action unchanged, and similarly for $\F_U$. 
\end{definition}
\begin{proposition}\label{prop:local iso implies iso of abelian subalgebras}
Assume that $\E$ and $\F$ are locally isomorphic. Then 
\[D_\E\cong_{\iota,\varphi}D_\F\quad\text{and}\quad D_\E''\cong_{\iota,\varphi}D_{\F}''\]
where we again write $D_\E$ and $D_\F$ for $C^*(\im\iota_\E,\im\varphi_\E)$ and $C^*(\im\iota_\F,\im\varphi_\F)$, respectively.
\end{proposition}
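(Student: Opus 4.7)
The plan is to first establish $D_\E \cong_{\iota,\varphi} D_\F$ via Proposition \ref{prop:local iso of modules}, and then to bootstrap the resulting isomorphism up to the level of double commutants by patching together local isomorphisms induced by the local correspondence isomorphisms.

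For the first statement, I would begin by observing that a correspondence isomorphism $T : \E_U \to \F_U$ is automatically $C_0(U)$-linear, adjointable and unitary, so in each fiber $x \in U$ it restricts to a unitary $T_x : \E_x \to \F_x$ intertwining the left actions: $T_x \varphi_\E(f)_x = \varphi_\F(f)_x T_x$. Hence $\varphi_\E(f)_x$ and $\varphi_\F(f)_x$ are unitarily equivalent operators and in particular share the same eigenvalues. By Remark \ref{rem:left action spectrum} the equality of eigenvalues for every $f \in C_0(Y)$ translates into
\[
\{f(y) : y \in r_\E(s_\E^{-1}(x))\} = \{f(y) : y \in r_\F(s_\F^{-1}(x))\},
\]
and since both sets are finite and $C_0(Y)$ separates points of $Y$, this forces the equality $r_\E(s_\E^{-1}(x)) = r_\F(s_\F^{-1}(x))$ for all $x \in X$. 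Proposition \ref{prop:local iso of modules} then yields the desired isomorphism $\phi : D_\E \to D_\F$ with $\iota_\F = \phi \circ \iota_\E$ and $\varphi_\F = \phi \circ \varphi_\E$.

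For the double commutant, for every $x \in X$ I would fix an open neighborhood $U_x$ together with a correspondence isomorphism $T_x : \E_{U_x} \to \F_{U_x}$. Conjugation by $T_x$ produces a $C_0(U_x)$-algebra isomorphism $\tilde T_x : \K(\E)|_{U_x} \to \K(\F)|_{U_x}$, and because $T_x$ intertwines $\iota$ and $\varphi$ the restriction of $\tilde T_x$ to $D_\E|_{U_x}$ coincides with $\phi|_{U_x}$. Given $a \in D_\E''$, I would define a section $\phi''(a)$ by the local rule $\phi''(a)|_{U_x} := \tilde T_x(a|_{U_x})$. For well-definedness one needs $\tilde T_x$ and $\tilde T_{x'}$ to agree on $D_\E''|_{U_x \cap U_{x'}}$. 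On the dense open subset $(U_x \cap U_{x'}) \setminus B$, where $B$ is the closed nowhere-dense branching set from Lemma \ref{lem:abelian C_0(X) algebra branched covering}, Lemma \ref{lem:double commutant} gives $a(y) \in (D_\E)_y$, and there both maps act as the fiber $\phi_y$. Continuity of the two sections $\tilde T_x(a|_{U_x \cap U_{x'}})$ and $\tilde T_{x'}(a|_{U_x \cap U_{x'}})$ together with density of $(U_x \cap U_{x'}) \setminus B$ then forces them to agree on all of $U_x \cap U_{x'}$. The same fiber-wise description shows $\phi''(a)(y) \in (D_\F)_y$ for $y \notin B$, so by Lemma \ref{lem:double commutant} we have $\phi''(a) \in D_\F''$.

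The resulting $\phi''$ is a $\ast$-homomorphism because each $\tilde T_x$ is, it extends $\phi$ by construction, and applying the same procedure to $T_x^{-1}$ provides an inverse; hence $\phi''$ is a $\ast$-isomorphism $D_\E'' \to D_\F''$ which intertwines $\iota$ and $\varphi$ because $\phi$ does. The most delicate point is the patching on the branching set: different choices of local trivializations give different conjugations on $\K(\E)|_{U_x \cap U_{x'}}$, and I rely crucially on the fact that they nevertheless coincide on the subalgebra $D_\E''|_{U_x \cap U_{x'}}$, which follows by continuity from $X \setminus B$ where $D_\E''$ is canonically determined by $D_\E$.
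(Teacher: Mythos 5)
Your proof is correct and follows essentially the same route as the paper: the first half is the paper's argument (fiberwise similarity of the left actions, Remark \ref{rem:left action spectrum}, and Proposition \ref{prop:local iso of modules}), and the second half is the Gelfand-dual formulation of the paper's argument --- the paper glues the local homeomorphisms of spectra induced by the local correspondence isomorphisms, using that they are uniquely determined over the dense set $X\setminus B$ via Lemma \ref{lem:double commutant}, whereas you glue the corresponding conjugation maps on the algebra side using the same lemma and the same density argument. The only point left implicit is that the compatible local sections $\tilde{T}_x(a|_{U_x})$ assemble into a genuine element of $\K(\F)$ (a routine partition-of-unity step with no analogue on the spectrum side, where gluing continuous maps over an open cover is automatic).
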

\begin{proof}
Local isomorphism of $\E$ and $\F$ implies that $(\varphi_\E)_x(f)$ and $(\varphi_\F)_x(f)$ are similar matrices for all $f\in C_0(Y)$ and all $x\in X$, and thus have the same spectrum. Taking into account Remark \ref{rem:left action spectrum} we obtain that Item (2) from Proposition \ref{prop:local iso of modules} is fulfilled. This shows that local isomorphism of $\E$ and $\F$ implies $D_\E\cong_{\iota,\varphi}D_\F$.

Write $Z_\E$, $\tilde{Z}_\E$, $Z_\F$, and $\tilde{Z}_\F$ for the Gelfand duals of $D_\E$, $D_\E''$, $D_\F$ and $D_\F''$, respectively. Denote the Gelfand duals of $\iota_\E$ as maps into $D_\E$ and $D_\E''$ by $s_\E$ and $\tilde{s}_\E$, respectively. In the same way we define $s_\F$, and $\tilde{s}_\F$. Also define $\tilde{r}_\E:\tilde{Z}_\E\to Y$ and $\tilde{r}_\F:\tilde{Z}_\F\to Y$ as the Gelfand duals of the left actions $\varphi_\E$ and $\varphi_\F$, respectively. The inclusion of $D_\E$ into $D_\E''$ yields a surjection of $\tilde{Z}_\E$ onto $Z_\E$ making the diagram 
% https://q.uiver.app/#q=WzAsMyxbMiwwLCJaX1xcbWF0aGNhbHtFfSJdLFswLDAsIlxcdGlsZGV7Wn1fXFxtYXRoY2Fse0V9Il0sWzEsMSwiWCJdLFsxLDAsIiIsMCx7InN0eWxlIjp7ImhlYWQiOnsibmFtZSI6ImVwaSJ9fX1dLFsxLDIsIlxcdGlsZGV7c31fXFxtYXRoY2Fse0V9IiwyXSxbMCwyLCJzX1xcbWF0aGNhbHtFfSJdXQ==
\[\begin{tikzcd}
	{\tilde{Z}_\mathcal{E}} && {Z_\mathcal{E}} \\
	& X
	\arrow[two heads, from=1-1, to=1-3]
	\arrow["{\tilde{s}_\mathcal{E}}"', from=1-1, to=2-2]
	\arrow["{s_\mathcal{E}}", from=1-3, to=2-2]
\end{tikzcd}\]
commute.

We know from Lemma \ref{lem:double commutant} that $\iota_\E(C_0(X\backslash B))D_\E$ and $\iota_\E(C_0(X\backslash B))D_\E''$ are equal as subalgebras of $\K(\E)$. Since we have $\iota_\E(C_0(X\backslash B))D_\E=C_0(s_\E^{-1}(X\backslash B))$ and $\iota_\E(C_0(X\backslash B))D_\E''=C_0(\tilde{s}_\E^{-1}(X\backslash B))$ this shows that the surjection from the above diagram restricts to a homeomorphism from $\tilde{s}_\E^{-1}(X\backslash B)$ to $s_\E^{-1}(X\backslash B)$. Of course all of this holds in exactly the same way if we replace $\E$ with $\F$. 

Since $\E$ and $\F$ are locally isomorphic, for every point $x$ in $X$ there exist an open neighborhood $U$ of $x$ and a homeomorphism $\phi_U$ from $\tilde{s}_\E^{-1}(U)$ to $\tilde{s}_\F^{-1}(U)$ such that $\tilde{s}_\E=\tilde{s}_\F\circ\phi_U$ as well as $\tilde{r}_\E=\tilde{r}_\F\circ\phi_U$. When restricted to $\tilde{s}_\E^{-1}(U\backslash B)$ it is uniquely determined, since it induces a uniquely determined homeomorphism from $s_\E^{-1}(U\backslash B)$ to $s_\F^{-1}(U\backslash B)$ by the discussion above. Since $U\backslash B$ is dense in $U$, it follows that $\phi_U$ is the unique homeomorphism from $\tilde{s}_\E^{-1}(U)$ to $\tilde{s}_\F^{-1}(U)$ such that $\tilde{s}_\E=\tilde{s}_\F\circ\phi$ and $\tilde{r}_\E=\tilde{r}_\F\circ\phi$. Uniqueness yields a homeomorphism $\phi$ from $\tilde{Z}_\E$ to $\tilde{Z}_\F$ such that $\tilde{s}_\E=\tilde{s}_\F\circ\phi$ as well as $\tilde{r}_\E=\tilde{r}_\F\circ\phi$. This shows $D_\E''\cong_{\iota,\varphi}D_{\F}''$.
\end{proof}
The next two examples show that in general, the implication from Proposition \ref{prop:local iso implies iso of abelian subalgebras} is not an equivalence. Even if $C^*(\im\iota_\E,\im\varphi_\E)''$ and $C^*(\im\iota_\F,\im\varphi_\F)''$ are isomorphic in the sense of Definition \ref{def:iso of subalgebras}, this is not sufficient to conclude that $\E$ and $\F$ are locally isomorphic. 

\begin{example}\label{ex:trivial covering}
Let $X$ be any compact Hausdorff space, $Z=X\times\{1,2,3\}$ and $Y=\{1,2\}$. Let $s:Z\to X$ be the trivial 3-sheeted covering. Define two range maps 
\[r_1(x,i)=\begin{cases}1\quad&\text{for }i\in\{1,2\},\\2\quad&\text{for }i=3, \end{cases}\quad\text{and}\quad r_2(x,i)=\begin{cases}1\quad&\text{for }i=1,\\2\quad&\text{for }i\in\{2,3\}. \end{cases}\]
Write $\E$ for $\Gamma_0(Z,r_1,s)$ and $\F$ for $\Gamma_0(Z,r_2,s)$. We adopt the notation from the proof of Proposition \ref{prop:local iso implies iso of abelian subalgebras}. Then $Z_\E=\tilde{Z}_\E=Z_\F=\tilde{Z}_F=X\times\{1,2\}$ and $\tilde{s}_\E=s_\E=\tilde{s}_F=s_\F$ is the trivial 2-sheeted covering of $X$. The range maps send the first copy of $X$ in this 2-sheeted covering to $1\in Y$, and the second copy to $2$. However, $\E$ and $\F$ are not locally isomorphic. 
\end{example}
Example \ref{ex:trivial covering} seems to suggest that we can capture local isomorphism of the $C^*$-correspondences by keeping track of the dimensions of the projections $p_y$ as in Lemma \ref{lem:structure of module from abelian subalgebra}. In Corollary \ref{cor:local iso for homogeneous subalgebras} we will show that this is indeed the case if $C^*(\im\iota,\im\varphi)$ or its double commutant have constant rank. However, the next example shows that it is not always true. 
\begin{example}
This example is a two-dimensional version of Example \ref{ex:different range maps}. Let $X$ be the square $[-1,1]^2$, let $Z$ be two disjoint copies of $X$, and let $s:Z\to X$ be the trivial 2-sheeted covering. Define the two range maps 
\begin{align*}r_1(x,y,i)&=\begin{cases}y|x^2-y^2|\quad&\text{for }i=1,|x|<|y|,\\0\quad&\text{for }i=1,|y|\leq|x|,\\-y|x^2-y^2|\quad&\text{for }i=2,|x|<|y|,\\0\quad&\text{for }i=2,|y|\leq|x|, \end{cases}\quad\text{and}\\ r_1(x,y,i)&=\begin{cases}|y||x^2-y^2|\quad&\text{for }i=1,|x|<|y|,\\0\quad&\text{for }i=1,|y|\leq|x|,\\-|y||x^2-y^2|\quad&\text{for }i=2,|x|<|y|,\\0\quad&\text{for }i=2,|y|\leq|x|. \end{cases}\end{align*}
As before we write $\E$ for $\Gamma_0(Z,r_1,s)$ and $\F$ for $\Gamma_0(Z,r_2,s)$. Then 
\[D_\E=D_\F=\{f\in C(X,D_2):f(x,y)\in\C 1_{M_2}\text{ for }|x|\geq|y|\}\]and hence 
\[D_\E'= D_\F'=\{f\in C(X,M_2):f(x,y)\in D_2\text{ for }|x|\leq|y|\}.\]
Therefore we have $D_\E''=D_\F''=D_\E=D_\F$. We can define an isomorphism $\Phi$ from $D_\E$ to $D_\F$ in the sense of Definition \ref{def:iso of subalgebras} by the following: For $|y|>|x|$ and $y<0$ let $\Phi(f)(x,y)=\sigma(f(x,y))$, where $\sigma$ is the map on $D_2$ permuting the diagonal entries. Set $\Phi(f)(x,y)=f(x,y)$ otherwise. Thus we cannot distinguish the two graphs. Yet $\Gamma(Z,r_1,s)$ and $\Gamma(Z,r_2,s)$ are not locally isomorphic, which one can see by looking at the point $(0,0)$. Note however that there is no isomorphism from $D_\E'$ to $D_\F'$ intertwining the left actions. This can be shown by considering the eigenvectors of the left action in each fiber. In fact we can reconstruct the graphs, and thus distinguish them, by considering the commutants instead of double commutants of $D_\E$ and $D_\F$. We suspect that this observation generalizes, and that local isomorphism of $C^*$-correspondences $\E$ and $\F$ is equivalent to isomorphism of $C^*(\im\iota_\E,\im\varphi_\E)'$ and $C^*(\im\iota_\F,\im\varphi_\F)'$ in the sense of Definition \ref{def:iso of subalgebras}.
\end{example}

We now investigate what happens if the abelian subalgebra $D$ of $\K(\E)$ under consideration has constant rank, meaning that the dimension of $D_x$ is the same for all $x\in X$. It turns out that one can characterize isomorphism of two $C^*$-correspondences $\E$ and $\F$ in terms of isomorphism of such subalgebras $D_\E$ and $D_F$ of $\K(\E)$ and $\K(\F)$, respectively, with an additional assumption concerning vector bundles over the spectra $\widehat{D}_E$ and $\widehat{D}_\F$.

Let $D$ be an abelian subalgebra of $\K(\E)$ with constant rank, containing $C_0(X)$. It follows from Section \ref{sect:cartans and corrs} that $\E^D$, namely $\E$ regarded as a $D$-module, is $\sigma$-finitely generated and $\sigma$-projective. We can construct a $D$-valued inner product $\langle\cdot,\cdot\rangle_\E^D$ turning $\E^D$ into a Hilbert $D$-module.  We write $\V_\E^D$ for the vector bundle over $\widehat{D}_\E$ associated to $\E^D$. By slight abuse of notation we write $\iota_\E$ for both the inclusion of $C_0(X)$ into $\K(\E)$ and into $D$. The Gelfand dual of $\iota_\E:C_0(X)\to D$ is denoted by $s_\E:\widehat{D}\to X$.

\begin{proposition}\label{prop:homogeneous subalgebra isomorphism}
Let $\E$ and $\F$ be proper nondegenerate full $\sigma$-finitely generated and $\sigma$-projective $C^*$-correspondences from $C_0(Y)$ to $C_0(X)$. Let $D_\F$ and $D_\E$ be constant-rank abelian subalgebras of $\K(\E)$ and $\K(\F)$, respectively. Assume that the images of $\varphi_\E$ and $\iota_\E$ as well as $\varphi_\F$ and $\iota_\F$ are contained in $D_\E$ and $D_\F$, respectively. Furthermore, assume that both of the following two conditions are fulfilled: 
\begin{enumerate}
\item We have $D_\E\cong_{\iota,\varphi} D_\F$. That is, there exists a $\ast$-isomorphism $\Pi$ from $D_\E$ to $D_\F$ such that $\Pi\circ\varphi_\E=\varphi_\F$ and $\Pi\circ\iota_\E=\iota_F$.
\item The vector bundle $\V_\E^D$ is isomorphic to the pullback of the vector bundle $\V_\F^D$ along the map $\widehat{\Pi}^{-1}:\widehat{D}_\E\to\widehat{D}_\F$. 
\end{enumerate}
Then $\E$ and $\F$ are isomorphic. 
\end{proposition}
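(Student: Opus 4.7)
The plan is to construct the desired isomorphism $\E \to \F$ by first lifting to the Hilbert $D$-modules $\E^{D_\E}$ and $\F^{D_\F}$, and then restricting the $D$-module structure back to the $C_0(X)$-module structure. Since $D_\E$ and $D_\F$ have constant rank, Lemma \ref{lem:abelian C_0(X) algebra branched covering} shows that the branching sets vanish, so $\E^{D_\E}$ and $\F^{D_\F}$ are honest Hilbert modules over the covering spaces $\widehat{D}_\E$ and $\widehat{D}_\F$ of $X$, with associated vector bundles $\V_\E^D$ and $\V_\F^D$ carrying canonical Hermitian metrics.

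First, I unpack hypothesis (1). The $\ast$-isomorphism $\Pi: D_\E \to D_\F$ dualizes to a homeomorphism $\widehat{\Pi}: \widehat{D}_\F \to \widehat{D}_\E$ which, by Gelfand duality, satisfies $s_\E \circ \widehat{\Pi} = s_\F$ (from $\Pi \circ \iota_\E = \iota_\F$) and intertwines the range maps $r_\E$ and $r_\F$ (from $\Pi \circ \varphi_\E = \varphi_\F$). Second, I upgrade the bundle isomorphism from hypothesis (2) to a metric-preserving one. Both sides carry natural Hermitian metrics: the pullback metric on $(\widehat{\Pi}^{-1})^* \V_\F^D$ and the canonical metric on $\V_\E^D$. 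By the uniqueness of Hermitian metrics up to isometry (\cite[Theorem 2.5]{partI}), the bundle isomorphism can be composed with a unitary automorphism to become metric-preserving. Passing to sections and identifying $\Gamma_0((\widehat{\Pi}^{-1})^* \V_\F^D)$ with $\F^{D_\F}$ via the natural bijection $\xi \leftrightarrow \xi \circ \widehat{\Pi}^{-1}$, I obtain a map $T: \E^{D_\E} \to \F^{D_\F}$ that is $\Pi$-equivariant and unitary: $T(\xi \cdot d) = T(\xi) \cdot \Pi(d)$ and $\langle T\xi, T\eta\rangle_{D_\F} = \Pi(\langle \xi, \eta\rangle_{D_\E})$.

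The last step is to restrict and check that $T: \E \to \F$ is an isomorphism of $C^*$-correspondences. The $C_0(X)$-linearity of $T$ follows from $\Pi \circ \iota_\E = \iota_\F$. The $C_0(X)$-valued inner products match because $\langle \xi, \eta\rangle_\E(x) = \sum_{z \in s_\E^{-1}(x)} \langle \xi, \eta\rangle_{D_\E}(z)$ (as in the computation from the proof of Proposition \ref{prop:cartan in compacts iso of modules}), combined with the fact that $\widehat{\Pi}$ restricts to a bijection $s_\F^{-1}(x) \to s_\E^{-1}(x)$. For the left actions, the key observation is that for $d \in D_\E$ abelian inside $\K(\E)$, the right $D_\E$-action on $\E^{D_\E}$ coincides with the left $\K(\E)$-action, that is, $\xi \cdot d = d \xi$; one verifies this locally from the explicit decomposition $\E_K^{D_\E} \cong \bigoplus_k p_k \E_K$ from Lemma \ref{lem:structure of module from abelian subalgebra}. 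Applying this to $d = \varphi_\E(g)$ and using $\Pi(\varphi_\E(g)) = \varphi_\F(g)$ yields $T(\varphi_\E(g) \xi) = \varphi_\F(g) T(\xi)$. The main obstacle I anticipate is orchestrating these three compatibilities (with the right $C_0(X)$-module structure, the inner product, and the left action) through the shift from the $D$-module picture to the $C^*$-correspondence picture; each is a direct consequence of one piece of the hypotheses, but the bookkeeping between the two different inner products and the two different actions requires care.
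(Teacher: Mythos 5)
Your proposal is correct and follows essentially the same route as the paper: pass to the Hilbert $D$-modules, use conditions (1) and (2) to produce a $\Pi$-equivariant unitary $T$, and then verify that $T$, viewed as a map $\E\to\F$, respects the right $C_0(X)$-action, the $C_0(X)$-valued inner product (via the sum-over-fibers formula), and the left actions. The one place you go beyond the paper is in explicitly upgrading the bundle isomorphism of condition (2) to a metric-preserving one via the uniqueness of Hermitian metrics up to isometry — a step the paper leaves implicit when it asserts that $T$ preserves the $D$-valued inner product — and this is a worthwhile clarification rather than a different argument.
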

\begin{proof}
What Conditions (1) and (2) are saying is that there is an isomorphism $(\Pi,T)$ of Hilbert modules from $\E^D$ to $\F^D$, with the additional condition that $\Pi\circ\varphi_\E=\varphi_\F$ and $\Pi\circ\iota_\E=\iota_F$ holds. Here $\Pi:D_\E\to D_F$ is the $\ast$-isomorphism from condition (1), and $T:\E^D\to\F^D$ corresponds to the vector bundle isomorphism from condition (2) via the Serre-Swan theorem. 

Since as sets $\E^D$ and $\F^D$ are equal to $\E$ and $\F$, respectively, we can regard $T$ as a map from $\E$ to $\F$. Take $\xi\in \E$ and $f\in C_0(X)$, and calculate
\begin{align*}
T(\xi)f=\iota_\F(f)T(\xi)=\Pi(\iota_\E(f))T(\xi)=T(\iota_\E(f)\xi)=T(\xi f).
\end{align*}
This shows that $T$ intertwines the right multiplication of $\E$ and $\F$. Now we show that it preserves the inner product. Recall that by definition of the $D$-valued inner product on $\E^D$ we have 
\[\langle\xi,\eta\rangle_\E(x)=\sum_{y\in s^{-1}(x)}\langle\xi,\eta\rangle_\E^D(y)
\]
for all $\xi,\eta\in\E$, where $s=\widehat{\iota}_\E$ is the branched covering from Lemma \ref{lem:abelian C_0(X) algebra branched covering}. Note that we tacitly identify $\E$ with $\E^D$ as sets. Of course a similar formula holds for the $D$-valued inner product on $\F^D$. With this in mind, calculate
\begin{align*}
\langle T\xi,T\eta\rangle_\F(x)=\sum_{y\in s^{-1}(x)}\langle T \xi,T\eta\rangle_\F^D(y)=\sum_{y\in s^{-1}(x)}\langle \xi,\eta\rangle_\E^D(y)=\langle\xi,\eta\rangle_\E(x). 
\end{align*}
This shows that $T$ is an isomorphism of Hilbert $C_0(X)$-modules from $\E$ to $\F$. Lastly we have
\[\varphi_\F(f)T(\xi)=\Pi(\varphi_\E(f))T(\xi)=T(\varphi_\E(f)\xi)\]
for all $\xi\in \E$ and $f\in C_0(Y)$, which shows that $T$ intertwines the left actions $\varphi_\E$ and $\varphi_\F$, and hence is an isomorphism of $C^*$-correspondences. This concludes the proof. 
\end{proof}
We can weaken the assumptions of Proposition \ref{prop:homogeneous subalgebra isomorphism} to obtain local isomorphism of $\E$ and $\F$ instead of isomorphism. More precisely, instead of demanding that the vector bundles $\V_\E^D$ and $\V_\F^D$ be isomorphic, we only want them to have the same rank over each point of $\widehat{D}_\E\cong\widehat{D}_\F$. 
\begin{corollary}\label{cor:local iso for homogeneous subalgebras}
Let $\E$, $\F$, $D_\E$, $D_\F$ and $\Pi$ be as in Proposition \ref{prop:homogeneous subalgebra isomorphism}, except that we do not assume condition (2). Instead, we demand that the rank of $\V_\E^D$ is the same as the rank of $(\widehat{\Pi}^{-1})^*(\V_\F^D)$ over each $z\in\widehat{D}_\E$. Then $\E$ and $\F$ are locally isomorphic in the sense of Definition \ref{def:local isomorphism}.
\end{corollary}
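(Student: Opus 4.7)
The plan is to reduce to Proposition \ref{prop:homogeneous subalgebra isomorphism} applied to suitable local restrictions. The only hypothesis of that proposition missing here is the global vector bundle isomorphism (condition (2)); everything else is preserved from the global data. So the task reduces to finding, for each $x\in X$, an open neighborhood $U$ on which $\V_\E^D$ and $(\widehat{\Pi}^{-1})^*\V_\F^D$ become isomorphic after restriction to $s_\E^{-1}(U)$.

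Since $D_\E$ has constant rank, say $m$, Lemma \ref{lem:abelian C_0(X) algebra branched covering} implies that the branching set of $s_\E$ is empty, so $s_\E:\widehat{D}_\E\to X$ is an unbranched $m$-sheeted covering map. Fix $x\in X$ and write $s_\E^{-1}(x)=\{z_1,\dots,z_m\}$; at each $z_i$ the two bundles $\V_\E^D$ and $(\widehat{\Pi}^{-1})^*\V_\F^D$ have the same rank $n_i$ by hypothesis. By local triviality of vector bundles, each $z_i$ admits an open neighborhood $W_i$ on which both bundles are trivial of rank $n_i$, and hence isomorphic; shrinking if necessary we may assume the $W_i$ are pairwise disjoint. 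Since $s_\E$ is a covering map, we can choose an evenly covered open neighborhood $U$ of $x$ with $s_\E^{-1}(U)=\bigsqcup_{i=1}^m V_i$ and $V_i\subset W_i$. Piecing together the isomorphisms of trivial bundles over each $V_i$ yields an isomorphism of $\V_\E^D$ with $(\widehat{\Pi}^{-1})^*\V_\F^D$ over $s_\E^{-1}(U)$.

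Now consider the restricted $C^*$-correspondences $\E_U$ and $\F_U$, which inherit properness, nondegeneracy, fullness, $\sigma$-finite generation and $\sigma$-projectivity from $\E$ and $\F$. The restricted subalgebras $(D_\E)_U\subset\K(\E_U)$ and $(D_\F)_U\subset\K(\F_U)$ still contain the images of $\iota$ and $\varphi$ restricted to $U$, and still have constant rank $m$. Moreover, $\Pi$ descends to an isomorphism $\Pi_U:(D_\E)_U\to(D_\F)_U$ intertwining $\iota$ and $\varphi$, with Gelfand dual given by the restriction of $\widehat{\Pi}^{-1}$ to a homeomorphism $s_\E^{-1}(U)\to s_\F^{-1}(U)$. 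By the previous paragraph, condition (2) of Proposition \ref{prop:homogeneous subalgebra isomorphism} is met for these restricted data. Applying that proposition gives $\E_U\cong\F_U$ as $C^*$-correspondences, and since $x$ was arbitrary, $\E$ and $\F$ are locally isomorphic in the sense of Definition \ref{def:local isomorphism}.

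The main obstacle is the bundle-matching step in the middle paragraph, where one must upgrade pointwise equality of ranks to an honest bundle isomorphism over an open set saturated under $s_\E$; the rest is bookkeeping verifying that the hypotheses of Proposition \ref{prop:homogeneous subalgebra isomorphism} pass to the restrictions.
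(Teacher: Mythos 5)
Your proof is correct and follows essentially the same route as the paper: restrict to a neighborhood of $x$ whose preimage under $s_\E$ trivializes both bundles, use the pointwise rank equality to upgrade to a bundle isomorphism there, and then invoke Proposition \ref{prop:homogeneous subalgebra isomorphism} for the restricted data. Your treatment of the sheet-by-sheet trivialization and of the descent of $\Pi$, $\iota$, $\varphi$ to the restriction is somewhat more explicit than the paper's, but the argument is the same.
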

\begin{proof}
Take $x\in X$, and let $K$ be a closed neighborhood of $x$ such that $\E^D$ is trivial over $s_\E^{-1}(K)$, and $\F^D$ is trivial over $s_\F^{-1}(K)$. Then the vector bundle associated to $\E^D_{s_\E^{-1}(K)}$ and the pullback of the vector bundle associated to $\F^D_{s_\F^{-1}(K)}$ along $\widehat{\Pi}^{-1}$ are two trivial vector bundles which have the same rank over each point of $s_\E^{-1}(K)$ . Thus they are isomorphic. Proposition \ref{prop:homogeneous subalgebra isomorphism} yields that $\E_K$ and $\F_K$ are isomorphic as $C^*$-correspondences from $C_0(X)$ to $C_0(K)$. Since $x$ was arbitrary we obtain that $\E$ and $\F$ are locally isomorphic. 
\end{proof}

\subsection{Local conjugacy of topological correspondences}

The following definition of local conjugacy for twisted topological correspondences is inspired by the similar definition for topological graphs, see \cite[Definition 4.3]{davidsonRoydor:2011}. The difference is that we do not allow for homeomorphisms on the base spaces $Y$ and $X$, see Remark \ref{rem:base preserving isos}.
\begin{definition}\label{def:local conjugacy}
We call two topological $Y$-$X$-correspondences $(Z,r,s)$ and $(Z',r',s')$ \emph{locally conjugate} if every point $x\in X$ has an open neighborhood $U$ such that the $Y$-$U$-correspondences $(Z_U,r_U,s_U)$ and $(Z_U',r_U',s_U')$ are isomorphic. Here we write $Z_U$ for $s^{-1}(U)$, and $s_U:Z_U\to U$ and $r_U:Z_U\to Y$ for the restrictions of $s$ and $r$, respectively. The same goes for $Z_U'$, $r_U'$ and $s_U'$. 

Two twisted topological $Y$-$X$-correspondences $(Z,r,s,\calL)$ and $(Z',r',s',\calL')$ are called locally conjugate if the underlying topological correspondences $(Z,r,s)$ and $(Z',r',s')$ are. 
\end{definition}

The following proposition is a generalization of \cite[Theorem 4.5]{frausino2023}. Note that the proof is the same as in \cite{frausino2023}, but formulated in the language developed in the Sections \ref{sect:correspondences} and \ref{sect:perspective of the compacts}. First we need a lemma.
\begin{lemma}[{\cite[Lemma 4.2]{frausino2023}}]\label{lem:bases}
    Let $(e_1,\dots,e_n)$ be the standard basis of $\C^n$. If $(x_1,\dots,x_n)$ is another basis of $\C^n$, then there exists a permutation $\sigma\in S_n$ such that for all $k=1,\dots,n$ the vectors $x_k$ and $e_{\sigma(k)}$ are not orthogonal.
\end{lemma}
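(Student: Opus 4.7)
The plan is to reformulate the conclusion as the existence of a nonvanishing term in the Leibniz expansion of a determinant. Concretely, I would form the change-of-basis matrix $A \in M_n(\mathbb{C})$ with entries
\[ A_{jk} \coloneqq \langle e_j, x_k \rangle, \]
so that the $k$-th column of $A$ records the coordinates of $x_k$ in the standard basis. Since $(x_1,\dots,x_n)$ is a basis of $\mathbb{C}^n$, the matrix $A$ is invertible and in particular $\det A \neq 0$.

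Next I would apply the Leibniz formula
\[ \det A = \sum_{\sigma \in S_n} \operatorname{sgn}(\sigma) \prod_{k=1}^n A_{\sigma(k),k}. \]
Because this sum is nonzero, at least one summand must be nonzero, so there exists $\sigma \in S_n$ with $A_{\sigma(k),k} \neq 0$ for every $k \in \{1,\dots,n\}$. Unpacking the definition of $A$, this says exactly that $\langle e_{\sigma(k)}, x_k \rangle \neq 0$ for every $k$, which is the desired non-orthogonality.

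There is essentially no obstacle; the only thing to be careful about is the convention for the matrix (rows versus columns), which is handled by placing the coordinates of $x_k$ in the $k$-th column. As an aside, a more combinatorial proof via Hall's marriage theorem applied to the bipartite graph with an edge $k \sim j$ whenever $\langle x_k,e_j\rangle\neq 0$ would also work: Hall's condition reduces to the observation that $|S|$ linearly independent vectors cannot all lie in the span of fewer than $|S|$ standard basis vectors. I would nonetheless present the determinant argument, as it is shorter and entirely self-contained.
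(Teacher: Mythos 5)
Your determinant argument is correct and complete: the matrix $A$ with $A_{jk}=\langle e_j,x_k\rangle$ is (up to entrywise conjugation, depending on the sesquilinearity convention) the coordinate matrix of the basis $(x_1,\dots,x_n)$, hence invertible, and the Leibniz expansion then forces some permutation $\sigma$ with $A_{\sigma(k),k}\neq 0$ for all $k$, which is exactly the claimed non-orthogonality. Note that the paper does not prove this lemma itself but simply imports it from \cite[Lemma 4.2]{frausino2023}, so there is no in-paper proof to compare against; your argument is a clean, self-contained justification (and your aside about Hall's theorem is a valid alternative, since the Hall condition follows from the fact that $|S|$ linearly independent vectors cannot lie in the span of fewer than $|S|$ of the $e_j$).
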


\begin{proposition}\label{prop:local conjugacy}
Let $(Z,r,s,\calL)$ and $(Z',r',s',\calL')$ be proper twisted topological $Y$-$X$ correspondences, for locally compact Hausdorff spaces $X$ and $Y$. Then the following are equivalent:
\begin{itemize}
\item[(a)] The correspondences $(Z,r,s,\calL)$ and $(Z',r',s',\calL')$ are locally conjugate in the sense of Definition \ref{def:local conjugacy}.
\item[(b)]The $C^*$-correspondences $\Gamma_0(Z,r,s,\calL)$ and $\Gamma_0(Z',r',s',\calL')$ are locally isomorphic in the sense of Definition \ref{def:local isomorphism}.
\end{itemize}
\end{proposition}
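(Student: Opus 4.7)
The plan is to prove the two implications separately. For (a) $\Rightarrow$ (b) the strategy is a local trivialization argument, while for (b) $\Rightarrow$ (a) all of the content lies in extracting a pairing of sheets from a $C^\ast$-correspondence isomorphism using Lemma~\ref{lem:bases}.

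For (a) $\Rightarrow$ (b), fix $x_0 \in X$ and let $\phi : Z_U \to Z'_U$ be a source- and range-preserving homeomorphism provided by local conjugacy. Since Hermitian line bundles have $\T$-valued transition functions, after shrinking $U$ we may choose isometric local trivializations of $\calL$ and $\calL'$ over $s^{-1}(U)$ and $s'^{-1}(U)$, respectively, under which sections become ordinary $C_0$-functions. Pullback along $\phi$ then defines a $C_0(U)$-linear bijection $\phi^\ast : \Gamma_0(Z',r',s',\calL')_U \to \Gamma_0(Z,r,s,\calL)_U$; a change of variables in the sum defining the inner product of Definition~\ref{def:twisted local homeo module} shows $\phi^\ast$ preserves inner products, and the identity $r \circ \phi^{-1} = r'$ shows it intertwines the left actions, establishing (b) at $x_0$.

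For (b) $\Rightarrow$ (a), fix $x_0 \in X$ and write $\E, \F$ for the two $C^\ast$-correspondences. Proposition~\ref{prop:correspondence to atlas}, combined with local isomorphism, yields (after shrinking) a single open neighborhood $U$ of $x_0$ supporting a normalizing chart $(U,h)$ of the vector bundle associated to $\E$ diagonalizing $\varphi_\E$, an analogous chart $(U,h')$ for $\F$, and an isomorphism $T : \E_U \to \F_U$ of $C^\ast$-correspondences. Under these (isometric) trivializations, the associated vector bundles have the same rank $n$ over $U$ and $T$ becomes a continuous unitary $T : U \to U(n)$, while the left actions take diagonal form
\[
\varphi_\E(f) = \diag(f \circ r_1, \ldots, f \circ r_n), \qquad \varphi_\F(f) = \diag(f \circ r'_1, \ldots, f \circ r'_n),
\]
for continuous $r_k, r'_k : U \to Y$ encoding the range maps on the sheets of the two coverings, as in the proof of Proposition~\ref{prop:correspondence to atlas}. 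The intertwining relation reads $T(x)\,\diag(f\circ r_k(x))\,T(x)^\ast = \diag(f \circ r'_k(x))$ for all $f \in C_0(Y)$ and $x \in U$. Apply Lemma~\ref{lem:bases} to the basis $(T(x_0)e_k)_{k=1}^n$ of $\C^n$ to obtain $\sigma \in S_n$ with $T(x_0)_{\sigma(k),k} \neq 0$ for all $k$; by continuity this persists on some open neighborhood $V \subseteq U$ of $x_0$. The $(j,k)$-entry of the intertwining relation gives $T(x)_{jk}\bigl(f(r_k(x)) - f(r'_j(x))\bigr) = 0$ for every $f$, so specializing $j = \sigma(k)$ and using that $C_0(Y)$ separates points yields $r_k(x) = r'_{\sigma(k)}(x)$ for all $x \in V$ and all $k$. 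Transporting $(x,k) \mapsto (x,\sigma(k))$ through the covering charts then produces a homeomorphism $Z_V \to Z'_V$ that intertwines both source and range maps, which is the desired local conjugacy at $x_0$.

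The main obstacle is that at $x_0$ several of the $r_k(x_0)$ may coincide, so that neither the unitary $T(x_0)$ nor a pointwise matching of sheets is uniquely determined; one cannot simply read off a permutation from $T$ fiberwise. Lemma~\ref{lem:bases} combined with the openness of non-vanishing of a matrix entry is precisely what bridges this gap, producing a single permutation that works uniformly on a neighborhood. That continuity of $T$ is the essential ingredient is sharply illustrated by Example~\ref{ex:different range maps}, in which the pointwise spectra of $\varphi_\E(f)$ and $\varphi_\F(f)$ agree everywhere (so the condition of Proposition~\ref{prop:local iso of modules} is satisfied) yet no continuous intertwiner exists near $0$, obstructing local isomorphism.
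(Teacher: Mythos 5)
Your proof is correct and follows essentially the same route as the paper: both directions hinge on choosing diagonalizing charts from Proposition \ref{prop:correspondence to atlas}, applying Lemma \ref{lem:bases} to the image of the frame under the isomorphism to extract a permutation $\sigma$, and using continuity (non-vanishing of a matrix entry, i.e.\ non-orthogonality of the matched frame vectors) to make $\sigma$ work uniformly on a neighborhood, forcing the fiberwise range values to agree. The only difference is cosmetic: the paper finishes by packaging the matched frames into a $C^*$-correspondence isomorphism and invoking Proposition \ref{prop:isos for different Cstar correspondences}, whereas you construct the base-preserving homeomorphism $Z_V\to Z'_V$ directly from $(x,k)\mapsto(x,\sigma(k))$ — both are valid.
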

\begin{proof}
    That (a) implies (b) is clear. 

    To see that (b) implies (a), take $x\in X$. Write $\E\coloneqq\Gamma_0(Z,r,s,\calL)$ and $\E'\coloneqq\Gamma_0(Z',r',s',\calL')$. Let $U$ be a neighborhood of $x$ such that there exists an isomorphism $\Phi$ from $\E_U$ to $\E'_U$. We can choose $U$ small enough so that $s_U:s^{-1}(U)\to U$ and $s'_U:(s')^{-1}(U)\to U$ are trivial coverings. Take charts associated to the graphs $(s^{-1}(U),r_U,s_U,\calL_{s^{-1}(U)})$ and $((s')^{-1}(U),r'_U,s'_U,\calL'_{(s')^{-1}(U)})$ as in Theorem \ref{thm:corr and atlas one to one}. Let $\xi_k\in\E$ and $\xi_k'\in\E'$ for $k=1,\dots,n_x$ be sections supported on $U$ constructed from those charts as in Remark \ref{rem:frame}. By construction, $(\xi_1(x),\dots,\xi_{n_x}(x))$ is an orthogonal basis for $\E_x\cong\C^{n_x}$. Since  $\Phi$ is inner-product preserving, $(\Phi(\xi_1)(x),\dots,\Phi(\xi_{n_x})(x))$ is a basis of $\E_x'\cong\C^{n_x}$. By Lemma \ref{lem:bases} there exists a permutation $\sigma\in S_{n_x}$ such that for all $k=1,\dots,n_x$, the vectors $\xi_k'(x)$ and $\Phi(\xi_{\sigma(k)})(x)$ are not orthogonal. 
    
Since $\xi_k$ was constructed from a chart diagonalizing the left action, $\xi_k(x)$ is an eigenvector of $\varphi_\E(f)_x$ for all $f\in C_0(Y)$. The same holds for $\xi_k'(x)$ and $\varphi_{\E'}(f)_x$. Note that $\Phi$ is an isomorphism of $C^*$-correspondences, and hence $\Phi(\xi_{\sigma(k)})(x)$ is an eigenvector of $\varphi_{\E'}(f)_x$ as well. Since $\xi_k'(x)$ and $\Phi(\xi_{\sigma(k)})(x)$ are not orthogonal, they have to belong to the same eigenspace. If we write $\lambda_k(x)$ for the eigenvalue of $\varphi_\E(f)_x$ associated to the eigenvector $\xi_k(x)$, and accordingly $\lambda_k'(x)$, then we have shown $\lambda_k'(x)=\lambda_{\sigma(k)}(x)$. 

There exists a closed neighborhood $K$ of $x$ contained in $U$ such that $\xi_k'(y)$ and $\Phi(\xi_{\sigma(k)})(y)$ are not orthogonal for all $k=1,\dots,n_y$ and all $y\in K$. The argument in the previous paragraph shows $\lambda_k'(y)=\lambda_{\sigma(k)}(y)$. Thus if we define a map $\Psi$ from $\E_K$ to $\E'_K$ by sending $(\xi_{\sigma(k)})_K$ to $(\xi_k')_K$ for all $k=1,\dots,n_x$, then $\Psi$ intertwines the left actions. Since $\{\xi_k(y)\}_k$ is an orthogonal basis for $\E_y$ for every $y\in K$, and the same holds for $\{\xi_k'(y)\}_k$ and $\E'_y$, we see that $\Psi$ is an isomorphism of $C^*$-correspondences from $\E_K$ to $\E'_K$. Now employ Proposition \ref{prop:isos for different Cstar correspondences} to see that $(Z,r,s,\calL)$ and $(Z',r',s',\calL')$ are locally conjugate. 
\end{proof}

\begin{remark}\label{rem:base preserving isos}
    The reader might have noticed that since our notions of isomorphism of $C^*$-correspondences as well as local conjugacy of topological graphs are stricter than what is used in the literature, Proposition \ref{prop:local conjugacy} does not seem to be an honest generalization of \cite[Theorem 4.5]{frausino2023}. Usually one would call two $C^*$-correspondences $\E$ and $\F$ over $A$ and $B$, respectively, isomorphic if there exist a linear map $T$ from $\F$ to $\E$ and a $\ast$-homomorphism $\pi$ from $A$ to $B$, such that 
    \[\langle T(\xi),T(\eta)\rangle_\F=\pi(\langle\xi,\eta\rangle_\E),\quad T(\xi a)=T(\xi)\pi(a),\quad T(\varphi_\E(a)\xi)=\varphi_\F(\pi(a))T(\xi),\]
    for all $\xi,\eta\in\E$ and $a\in A$. In a similar way, the definition of local conjugacy in \cite{davidsonRoydor:2011} allows for the topological graphs to be over different vertex spaces, and involves a homeomorphism between these spaces. 

    However, if $\E$ and $\F$ are isomorphic via $(T,\pi)$, then we can define a new $C^*$-correspondence $\tilde{\F}$ over $A$ by 
    \[\langle\xi,\eta\rangle_{\tilde{\F}}\coloneqq \pi^{-1}(\langle\xi,\eta\rangle_\F),\quad \xi\cdot_{\tilde{\F}} a\coloneqq \xi\cdot \pi(a),\quad \varphi_{\Tilde{\F}}(a)\xi\coloneqq \varphi_\F(\pi(a))\xi,\]
    where $\xi,\eta\in\F$ and $a\in A$. Then $\E$ and $\tilde{\F}$ are isomorphic in the sense that we have used in this paper. Similarly, if given a topological graph and a homeomorphism between its vertex space and some other space, one can pull back the graph along the homeomorphism. Therefore Proposition \ref{prop:local conjugacy} does in fact imply \cite[Theorem 4.5]{frausino2023}.

\end{remark}

In \cite{frausino2023} it was asked what information is necessary in addition to local conjugacy to conclude isomorphism of the $C^*$-correspondences associated to two topological graphs. That local conjugacy alone is not sufficient was shown in \cite{bilich2025obstructionisomorphismtensoralgebras}, and it also follows from the example in Section \ref{sect:covering with nontrivial vector bundle}. We can answer the question in the special case that one of the algebras $C^*(\im\iota_\E,\im\varphi_\E)$ or $C^*(\im\iota_\E,\im\varphi_\E)''$ from the previous section has constant rank. 

Let $(Z,r,s,\calL)$ and $(Z',r',s',\calL')$ be proper twisted topological $Y$-$X$-correspondences. Write $\E$ and $\E'$ for the $C^*$-correspondences $\Gamma_0(Z,r,s,\calL)$ and $\Gamma_0(Z',r',s',\calL')$, respectively. Assume without loss of generality that $s$ and $s'$ are surjective, or equivalently that $\E$ and $\E'$ are right full. Also assume that either $C^*(\im\iota_\E,\im\varphi_\E)$ and $C^*(\im\iota_{\E'},\im\varphi_{\E'})$ or their double commutants have constant rank. Write $D_\E$ and $D_{\E'}$ for the respective algebras. Adopt the notation of Proposition \ref{prop:homogeneous subalgebra isomorphism} for the vector bundles $\V_\E^D$ and $\V_{\E'}^D$.

If $(Z,r,s,\calL)$ and $(Z',r',s',\calL')$ are locally conjugate, then by Proposition \ref{prop:local conjugacy} the $C^*$-correspondences $\E$ and $\E'$ are locally isomorphic. Hence, by Proposition \ref{prop:local iso implies iso of abelian subalgebras} we have $D_\E\cong_{\iota,\varphi}D_{\E'}$. Write $\Pi$ for this isomorphism.

\begin{corollary}\label{cor:iso and local conjugacy}
The $C^*$-correspondences $\E$ and $\E'$ are isomorphic if and only if both of the following conditions hold:
\begin{enumerate}
\item[(a)] The twisted $Y$-$X$-correspondences $(Z,r,s,\calL)$ and $(Z',r',s',\calL')$ are locally conjugate. 
\item[(b)] The vector bundle $\V_\E^D$ is isomorphic to the pullback of the vector bundle $\V_{\E'}^D$ along the map $\widehat{\Pi}^{-1}:\widehat{D}_\E\to\widehat{D}_{\E'}$. 
\end{enumerate}
\end{corollary}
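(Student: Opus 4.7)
The plan is to combine Propositions \ref{prop:local conjugacy}, \ref{prop:local iso implies iso of abelian subalgebras}, and \ref{prop:homogeneous subalgebra isomorphism}, which together essentially give both directions.

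For the implication (a), (b) $\Rightarrow$ $\E\cong\E'$, I would start from condition (a): by Proposition \ref{prop:local conjugacy}, local conjugacy of the twisted correspondences gives local isomorphism of $\E$ and $\E'$. Proposition \ref{prop:local iso implies iso of abelian subalgebras} then yields an isomorphism $D_\E\cong_{\iota,\varphi}D_{\E'}$, and by the uniqueness clause of Proposition \ref{prop:local iso of modules} (extended to the double commutant case using Lemma \ref{lem:double commutant}, since a $\ast$-isomorphism intertwining $\iota$ and $\varphi$ is determined on the dense subalgebra $D_\E$ inside its double commutant), this isomorphism must coincide with the $\Pi$ fixed in the corollary statement. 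Combined with hypothesis (b), all assumptions of Proposition \ref{prop:homogeneous subalgebra isomorphism} are satisfied, producing an isomorphism $\E\cong\E'$.

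For the converse, suppose $\Phi:\E\to\E'$ is an isomorphism of $C^*$-correspondences. It is in particular a local isomorphism, so Proposition \ref{prop:local conjugacy} gives (a) at once. For (b), the induced $\ast$-isomorphism $\Psi:\K(\E)\to\K(\E')$ intertwines $\iota_\E$ with $\iota_{\E'}$ and $\varphi_\E$ with $\varphi_{\E'}$, and hence restricts to a $\ast$-isomorphism $\Pi':D_\E\to D_{\E'}$ with $\Pi'\circ\iota_\E=\iota_{\E'}$ and $\Pi'\circ\varphi_\E=\varphi_{\E'}$. By the uniqueness just mentioned, $\Pi'=\Pi$. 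Viewing $\E'$ as a $D_\E$-module via $\Pi$, the map $\Phi$ is then $D_\E$-linear. Using Equation (\ref{eq:inner product}) together with the observation that $\Phi$ preserves the projections $p_z\in D_\E$ (as they are elements of the $\ast$-subalgebra $D_\E$, on which $\Phi$ acts through $\Pi$) as well as the pointwise inner product $\beta$, one checks that $\Phi$ preserves the $D$-valued inner products. Thus $(\Pi,\Phi)$ is an isomorphism of Hilbert modules $\E^D\cong(\E')^D$, which under Serre--Swan translates into a bundle isomorphism $\V_\E^D\cong(\widehat{\Pi}^{-1})^*(\V_{\E'}^D)$, giving (b).

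The main subtle step is in the forward direction: verifying that $\Phi$ automatically preserves the $D$-valued inner product, not merely the $C_0(X)$-valued one, so that it truly descends to an isomorphism at the level of $D$-modules. The point is that both $D_\E$ and its $D_\E$-valued inner product are built canonically from $\iota_\E$, $\varphi_\E$, and $\beta$ via the projections $p_z$; since $\Phi$ respects all of this data (through $\Pi$), it must also respect the enriched Hilbert module structure. A similar verification underlies the proof of Proposition \ref{prop:homogeneous subalgebra isomorphism}, so the work has essentially already been done there.
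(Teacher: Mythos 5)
Your proposal is correct and follows essentially the same route as the paper: the paper's own proof consists of exactly the two steps you describe (forward direction via Proposition \ref{prop:local conjugacy} together with the definitions of $D_\E$, $D_{\E'}$, $\V_\E^D$, $\V_{\E'}^D$; converse via Proposition \ref{prop:homogeneous subalgebra isomorphism}), and your write-up just fills in the details the paper leaves implicit. One small inaccuracy: $D_\E$ is in general \emph{not} norm-dense in its double commutant $D_\E''$ (e.g.\ in Example \ref{ex:different range maps} one has $D_\E\subsetneq D_\E''=C(X,D_2)$), so the uniqueness of $\Pi$ in the double-commutant case cannot be deduced by density of $D_\E$ in $D_\E''$; it is nevertheless available, being established in the proof of Proposition \ref{prop:local iso implies iso of abelian subalgebras} via Lemma \ref{lem:double commutant} and density of $X\setminus B$ in $X$, so the step you need still goes through.
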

\begin{proof}
If $\E$ and $\E'$ are isomorphic, then (a) holds because of Proposition \ref{prop:local conjugacy}. Condition (b) holds because of how $D_{\E}$, $D_{\E'}$ as well as $\V_\E^D$ and $\V_{\E'}^D$ are defined.

If we assume both (a) and (b), then it follows from Proposition \ref{prop:homogeneous subalgebra isomorphism} that $\E$ and $\E'$ are isomorphic.
\end{proof}
In \cite{frausino2023} the authors gave an example of two nonisomorphic topological graphs whose $C^*$-correspondences are isomorphic. The example consisted of the two 2-sheeted coverings of $S^1$, whose associated vector bundles are trivial as we have seen in Section \ref{sec:twisted coverings over spheres}. Thus in this example, the fact that the topological graphs are not isomorphic stems from the fact that the underlying coverings are different. One might wonder whether any two graphs with isomorphic $C^*$-correspondences and the same underlying covering must be isomorphic. To conclude this section, we give an example showing that this is not the case.
\begin{example}
    This is a variant of Example \ref{ex:different range maps}.
    Take $X=Y=[-1,1]$, $Z=[-1,1]\times\{1,2\}$ and let $s$ be the trivial covering $s(x,i)=x$. Let $f\in C([-1,1])$ be the function given by 
    \[f(x)=\begin{cases}x+\frac{1}{2}\quad&\text{for }x<-\frac{1}{2},\\ 0\quad&\text{for }-\frac{1}{2}<x<\frac{1}{2},\\x-\frac{1}{2}\quad&\text{for }x>\frac{1}{2}.\end{cases}\]
     We define two different range maps,
    \[r_1(x,i)=\begin{cases}f\quad&\text{for }i=1,\\-f\quad&\text{for }i=2, \end{cases}\quad\text{and}\quad r_2(x,i)=\begin{cases}|f|\quad&\text{for }i=1,\\-|f|\quad&\text{for }i=2. \end{cases}\]
    The resulting topological graphs are not isomorphic, but have isomorphic $C^*$-correspondences.
\end{example}

\appendix

\section{Covering spaces and principal bundles}\label{appendix A}
 Let $\mathcal{P}=(P,p,X)$ be a principal $S_n$-bundle over $X$. Form the fiber bundle $\mathcal{Z}=(Z,p_Z,X)$ with fiber $\{1,\dots,n\}$ associated to $\mathcal{P}$ under the permutation action of $S_n$ on $\{1,\dots,n\}$. 
\begin{lemma}\label{lem:bundle to covering}
    The bundle projection $p_Z:Z\to X$ is an $n$-sheeted covering.
\end{lemma}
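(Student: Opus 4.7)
The plan is to reduce the statement to local triviality of the associated bundle. Since $\mathcal{P}$ is locally trivial as a principal bundle, the associated fiber bundle $\mathcal{Z}$ is locally trivial with the same trivializing cover, and the fiber $\{1,\dots,n\}$ is discrete (as $S_n$ is a finite, hence discrete, topological group acting on a finite discrete set). So the claim should follow immediately from unwinding the definitions.

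More concretely, I would proceed as follows. First, choose an atlas $(U_i,\phi_i)_{i\in I}$ of $\mathcal{P}$. Via the construction of the associated bundle recalled in Section \ref{sct:principal bundles and fiber bundles}, this yields an atlas $(U_i,h_i)_{i\in I}$ of $\mathcal{Z}$ with fiber-preserving homeomorphisms
\[
h_i : U_i \times \{1,\dots,n\} \;\longrightarrow\; p_Z^{-1}(U_i),
\]
so that $p_Z \circ h_i$ is the projection onto the first factor. Since $\{1,\dots,n\}$ carries the discrete topology, $U_i \times \{1,\dots,n\}$ is homeomorphic to the disjoint union $\bigsqcup_{k=1}^n U_i$, and each sheet is mapped homeomorphically onto its image by $p_Z$. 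Taking $D_x = \{1,\dots,n\}$ for every $x \in X$, this is precisely the definition of an $n$-sheeted covering map over $U_i$ from Section \ref{sect:twisted coverings}.

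Since the $U_i$ cover $X$, any point $x \in X$ lies in some $U_i$, so $U_i$ serves as the required evenly covered neighborhood of $x$. Surjectivity of $p_Z$ follows because every fiber $p_Z^{-1}(x)$ has exactly $n$ elements. There is no real obstacle here; the proof is essentially a direct verification, and the only thing worth emphasizing is that discreteness of the fiber (inherited from discreteness of $S_n$) is what makes the local trivialization into the required disjoint-union decomposition.
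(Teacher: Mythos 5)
Your proof is correct and follows essentially the same route as the paper: both deduce local triviality of $\mathcal{Z}$ from that of $\mathcal{P}$ and observe that a local trivialization with discrete fiber $\{1,\dots,n\}$ is exactly the definition of an $n$-sheeted covering. Your version just spells out the discreteness of the fiber and surjectivity a bit more explicitly.
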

\begin{proof}
    Take $x\in X$. The principal bundle $\mathcal{P}$ is locally trivial, and and therefore $\mathcal{Z}$ is as well. Hence exist an open neighborhood $U$ of $x$ and a homeomorphism $ h$ from $U\times\{1,\dots,n\}$ to $p_Z^{-1}(U)$ such that $p_Z\circ  h=p_1$. This is exactly the definition of an $n$-sheeted covering map. 
\end{proof}
\begin{lemma}\label{lem:covering to bundle}
    Let $s:Z\to X$ be an $n$-sheeted covering map. Then we can construct a principal $S_n$-bundle such that carrying out the construction from Lemma \ref{lem:bundle to covering} gives back $s$. 
\end{lemma}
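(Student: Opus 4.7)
The plan is to reverse-engineer a principal $S_n$-bundle from the data of the covering $s$ by extracting transition functions. First, I would fix an open cover $\{U_i\}_{i\in I}$ of $X$ by evenly covered neighborhoods, together with trivializing homeomorphisms $\phi_i:U_i\times\{1,\dots,n\}\to s^{-1}(U_i)$ satisfying $s\circ\phi_i=p_1$. These exist by definition of an $n$-sheeted covering.

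Next, on each intersection $U_i\cap U_j$, the composition $\phi_i^{-1}\circ\phi_j$ maps $(U_i\cap U_j)\times\{1,\dots,n\}$ to itself while preserving the first coordinate, so it has the form $(x,k)\mapsto(x,\sigma_{ij}(x)(k))$ for a uniquely determined function $\sigma_{ij}:U_i\cap U_j\to S_n$. Since $\phi_i^{-1}\circ\phi_j$ is continuous and $\{1,\dots,n\}$ carries the discrete topology, for each fixed $k$ the map $x\mapsto\sigma_{ij}(x)(k)$ is locally constant, hence $\sigma_{ij}$ is continuous when $S_n$ is given the discrete topology. The cocycle identity $\sigma_{ij}\sigma_{jk}=\sigma_{ik}$ on triple intersections follows immediately from associativity of function composition. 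Thus $\{\sigma_{ij}\}$ is a genuine system of transition functions with values in $S_n$.

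I would then apply the construction recalled in Section~\ref{sct:principal bundles and fiber bundles} (from \cite[Chapter 5, Theorem 3.2]{Husemoller:1993}) to produce a principal $S_n$-bundle $\mathcal{P}=(P,p,X)$ whose transition functions with respect to a suitable atlas coincide with $\{\sigma_{ij}\}$.

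Finally, I would verify that carrying out the construction of Lemma~\ref{lem:bundle to covering} on $\mathcal{P}$ recovers $s$ up to isomorphism. The associated fiber bundle $\mathcal{Z}=\mathcal{P}\times\{1,\dots,n\}$ inherits an atlas over the same $\{U_i\}_{i\in I}$ with the same transition functions $\{\sigma_{ij}\}$, by construction of associated bundles. Comparing local trivializations, the map sending $\phi_i(x,k)\in s^{-1}(U_i)$ to the corresponding element of $p_Z^{-1}(U_i)$ is well-defined on overlaps precisely because both bundles have the same transition functions, and patches together to a base-preserving homeomorphism $Z\to Z'$ that intertwines $s$ and $p_Z$; the remaining expected obstacle is just checking well-definedness on overlaps, which reduces to the identity $\sigma_{ij}(x)(k)=\sigma_{ij}(x)(k)$. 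This gives an isomorphism of coverings and completes the proof.
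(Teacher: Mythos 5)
Your proposal is correct and follows essentially the same route as the paper: extract $S_n$-valued transition functions from the local trivializations of the covering, observe they are locally constant (hence continuous) and satisfy the cocycle identity, and then invoke the reconstruction of a principal bundle from transition functions, noting that the associated $\{1,\dots,n\}$-bundle is glued from the same data as $Z$ and hence recovers $s$. Your final verification step is in fact spelled out in slightly more detail than the paper's, which simply asserts that the transition-function data specifies a principal bundle whose associated fiber bundle has total space $Z$ and projection $s$.
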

\begin{proof}
    We can find a finite open cover $\{U_i\}_{i\in I}$ of $X$ and homeomorphisms $h_i$ from $U_i\times\{1,\dots,n\}$ to $s^{-1}(U_i)$ such that $s\circ h=p_1$ for all $i\in I$. Now let $i,j\in I$ be such that the intersection of $U_i$ and $U_j$ is nonempty, and take $x\in U_i\cap U_j$. Then $h_i^{-1}\circ h_j$ is a bijection on $p_1^{-1}(x)$, and hence determines a permutation $\sigma_{ij}(x)\in S_n$. We write $\sigma_{ij}$ for the map from $U_i\cap U_j$ to $S_n$ sending $x$ to $\sigma_{ij}(x)$.

    Now take $y\in s^{-1}(x)$. Then $ h_j^{-1}(y)=(x,k)$ and $ h_i^{-1}(y)=(x,\sigma_{ij}(x)k)$ for some $k\in\{1,\dots,n\}$. We obtain an open neighborhood of $y$ by setting $V\coloneqq  h_j(U_j\times\{k\})\cap  h_i(U_i\times\{\sigma_{ij}(x)k\})$. The image of any $z\in V$ under $ h_j^{-1}$ is $(s(z),k)$, and under $ h_i^{-1}$ it is $(s(z),\sigma_{ij}(x)k)$. Therefore $\sigma_{ij}(s(z))k=\sigma_{ij}(x)k$ for all $z\in V$. Since we could have chosen any element from $s^{-1}(x)$ as $y$, and since there are only finitely many of them, this shows that there exists a neighborhood of $x$ on which $\sigma_{ij}$ is constant. Hence $\sigma_{ij}$ is locally constant, and in particular continuous. 

    It is clear that the cocycle identity $\sigma_{ij}\sigma_{jl}=\sigma_{il}$ holds. We know from section \ref{sct:principal bundles and fiber bundles} that this data specifies a principal $S_n$-bundle such that the total space of the associated fiber bundle with fiber $\{1,\dots,n\}$ is given by $Z$, with bundle projection $s$. We are done. 
\end{proof}
Let $s:Z\to X$ and $s':Z'\to X$ be two $n$-sheeted covering maps of $X$. A map $\rho$ from $Z$ to $Z'$ is called \emph{base-preserving} if $s'\circ\rho=s$. An isomorphism of coverings is defined to be a base-preserving homeomorphism. 
\begin{proposition}\label{prop:covering spaces and principal bundles}
The constructions from Lemmas \ref{lem:bundle to covering} and \ref{lem:covering to bundle} give a bijection between isomorphism classes of $n$-sheeted coverings of $X$ and isomorphism classes of principal $S_n$-bundles over $X$. 
 \end{proposition}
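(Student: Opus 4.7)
The plan is to verify that the assignments constructed in Lemmas \ref{lem:bundle to covering} and \ref{lem:covering to bundle} descend to well-defined maps on isomorphism classes, and that they are mutually inverse. The key observation underlying everything is that both a principal $S_n$-bundle and an $n$-sheeted covering are uniquely determined, up to isomorphism, by an $S_n$-valued system of transition functions (via \cite[Chapter 5, Theorem 3.2]{Husemoller:1993}), and the two constructions preserve this transition data.

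For the first direction, suppose we start with a principal $S_n$-bundle $\mathcal{P}$. Choose an atlas for $\mathcal{P}$ with transition functions $\sigma_{ij}:U_i\cap U_j\to S_n$. The induced atlas of the associated fiber bundle $\mathcal{Z}=(Z,p_Z,X)$ with fiber $\{1,\dots,n\}$ has the same transition functions $\sigma_{ij}$, as follows from the construction in Section \ref{sct:principal bundles and fiber bundles}. When we feed this covering $p_Z$ into Lemma \ref{lem:covering to bundle} using the atlas $(U_i,h_i)$ coming from the associated bundle structure, the cocycle extracted there is precisely $\{\sigma_{ij}\}$. Hence the principal bundle reconstructed from $\mathcal{Z}$ is isomorphic to $\mathcal{P}$. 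The other direction is already recorded inside Lemma \ref{lem:covering to bundle}: if we start from a covering $s:Z\to X$, form the principal $S_n$-bundle via its transition cocycle, and then take the associated fiber bundle, the resulting bundle projection is (canonically homeomorphic to) $s$ again.

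It remains to check that both assignments are compatible with isomorphism. If $\rho:Z\to Z'$ is a base-preserving homeomorphism between two $n$-sheeted coverings of $X$, then pulling back local trivializations $h_i':U_i\times\{1,\dots,n\}\to (s')^{-1}(U_i)$ of $s'$ through $\rho$ yields local trivializations of $s$ over the same open sets. The resulting transition cocycles $\sigma_{ij}$ and $\sigma_{ij}'$ differ by a continuous coboundary $\tau_i:U_i\to S_n$ (the fiberwise permutation relating $h_i$ and $\rho^{-1}\circ h_i'$), which by \cite[Theorem 2.7, Chapter 4]{Husemoller:1993} is exactly the data of an isomorphism of principal $S_n$-bundles. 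Conversely, an isomorphism of principal $S_n$-bundles induces via Lemma \ref{lem:inducing fiber bundle morphism} (applied to the identity on $\{1,\dots,n\}$) a fiber bundle isomorphism between the associated coverings, which is a base-preserving homeomorphism on total spaces. The easy routine verification shows these two operations on isomorphisms are inverse to each other.

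The main point to be careful about is the compatibility with refinements: the atlases used in the two directions need not live over the same open cover. The standard workaround is to pass to a common refinement whenever comparing transition cocycles, after which all cocycle manipulations above take place over the same index set. Everything else is either definitional or an immediate consequence of the general principle that $S_n$-cocycles classify both categories, so no further obstacle is expected.
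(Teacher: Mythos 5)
Your proposal is correct and follows essentially the same route as the paper: both directions of the isomorphism-compatibility are handled by extracting a continuous coboundary $r_i:U_i\to S_n$ relating the local trivializations and invoking \cite[Theorem 2.7, Chapter 4]{Husemoller:1993}, while the mutual-inverse check reduces to the observation that both constructions preserve the $S_n$-valued transition cocycle. The only cosmetic difference is that you cite Lemma \ref{lem:inducing fiber bundle morphism} for the induced map on associated bundles, where the relevant fact is really the functoriality of the associated-bundle construction described in Section \ref{sct:principal bundles and fiber bundles}; this does not affect the validity of the argument.
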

\begin{proof}
 What is left to prove is that $\mathcal{P}$ and $\mathcal{P}'$ are isomorphic as principal $S_n$-bundles if and only if the associated covering spaces are isomorphic. One direction is clear, since an isomorphism from $\mathcal{P}$ to $\mathcal{P}'$ will induce an isomorphism between the associated fiber bundles $\mathcal{Z}$ and $\mathcal{Z}'$. Such an isomorphism is in particular a base-preserving homeomorphism between $Z$ and $Z'$, and thus an isomorphism of covering maps. 
 
 On the other hand, assume that we have an isomorphism $\rho$ of covering maps from $s:Z\to X$ to $s':Z'\to X$. We can choose a finite cover $\{U_i\}$ of $X$ such that both $s$ and $s'$ are trivial over the $U_i$. Write $ h_i:U_i\times\{1,\dots,n\}\to s^{-1}(U_i)$ for the homeomorphisms that trivialize $s$, and similarly define $ h_i'$. Since $\rho$ is by definition base-preserving, we obtain continuous maps $r_i:U_i\to S_n$ for every $i\in I$ such that the diagram 
% https://q.uiver.app/#q=WzAsNCxbMCwxLCIocycpXnstMX0oVV9pKSJdLFsxLDEsIlVfaVxcdGltZXNcXHsxLC4uLixuXFx9Il0sWzAsMCwic157LTF9KFVfaSkiXSxbMSwwLCJVX2lcXHRpbWVzXFx7MSwuLi4sblxcfSJdLFsyLDAsIlxccmhvIl0sWzMsMSwicl9pIl0sWzEsMCwiaF9pJyIsMl0sWzMsMiwiaF9pIiwyXV0=
\[\begin{tikzcd}
	{s^{-1}(U_i)} & {U_i\times\{1,\dots,n\}} \\
	{(s')^{-1}(U_i)} & {U_i\times\{1,\dots,n\}}
	\arrow["\rho", from=1-1, to=2-1]
	\arrow["{h_i}"', from=1-2, to=1-1]
	\arrow["{r_i}", from=1-2, to=2-2]
	\arrow["{h_i'}"', from=2-2, to=2-1]
\end{tikzcd}\]
commutes, where by abuse of notation we also write $r_i$ for the map from $U_i\times\{1,\dots,n\}$ to itself sending $(x,k)$ to $(x,r_i(x)k)$. It follows from the diagram 
% https://q.uiver.app/#q=WzAsNixbMSwxLCIocycpXnstMX0oVV9pXFxjYXAgVV9qKSJdLFsyLDEsIlVfaVxcY2FwIFVfalxcdGltZXNcXHsxLC4uLixuXFx9Il0sWzEsMCwic157LTF9KFVfaVxcY2FwIFVfaikiXSxbMiwwLCJVX2lcXGNhcCBVX2pcXHRpbWVzXFx7MSwuLi4sblxcfSJdLFswLDEsIlVfaVxcY2FwIFVfalxcdGltZXNcXHsxLC4uLixuXFx9Il0sWzAsMCwiVV9pXFxjYXAgVV9qXFx0aW1lc1xcezEsLi4uLG5cXH0iXSxbMiwwLCJcXHJobyJdLFszLDEsInJfaSJdLFsxLDAsImhfaSciLDJdLFszLDIsImhfaSIsMl0sWzQsMCwiaF9qJyJdLFs1LDIsImhfaiJdLFs1LDQsInJfaiJdXQ==
\[\begin{tikzcd}
	{U_i\cap U_j\times\{1,\dots,n\}} & {s^{-1}(U_i\cap U_j)} & {U_i\cap U_j\times\{1,\dots,n\}} \\
	{U_i\cap U_j\times\{1,\dots,n\}} & {(s')^{-1}(U_i\cap U_j)} & {U_i\cap U_j\times\{1,\dots,n\}}
	\arrow["{h_j}", from=1-1, to=1-2]
	\arrow["{r_j}", from=1-1, to=2-1]
	\arrow["\rho", from=1-2, to=2-2]
	\arrow["{h_i}"', from=1-3, to=1-2]
	\arrow["{r_i}", from=1-3, to=2-3]
	\arrow["{h_j'}", from=2-1, to=2-2]
	\arrow["{h_i'}"', from=2-3, to=2-2]
\end{tikzcd}\]
that for all $i,j\in I$ we have $\sigma_{ij}=r_i^{-1}\sigma_{ij}'r_j$. Hence the principal bundles constructed from $s$ and $s'$ are isomorphic by \cite[Theorem 2.7, Chapter 4]{Husemoller:1993}. This concludes the proof. 
 \end{proof}

\printbibliography

\end{document}